\newlist{enumroman}{enumerate}{1}
\setlist[enumroman]{font=\normalfont,label=(\roman*),leftmargin=0.3in}
\numberwithin{equation}{section}
\newtheorem{theorem}{Theorem}[section]
\newtheorem{lemma}[theorem]{Lemma}
\newtheorem{proposition}[theorem]{Proposition}
\newtheorem{corollary}[theorem]{Corollary}
\theoremstyle{definition}
\newtheorem{definition}[theorem]{Definition}
\newtheorem{example}[theorem]{Example}
\newtheorem{remark}[theorem]{Remark}
\newcommand{\norm}[1]{\left\lVert#1\right\rVert}
\newcommand{\subtitle}[1]{%
  \posttitle{%
    \par\end{center}
    \begin{center}\large#1\end{center}
    \vskip0.5em}%
}
\newcommand{\defn}[1]{\textbf{#1}}
\newcommand{\RN}[1]{%
  \textup{\uppercase\expandafter{\romannumeral#1}}%
}
\tikzset{
curarrow/.style={
rounded corners=8pt,
execute at begin to={every node/.style={fill=red}},
to path={-- ([xshift=50pt]\tikztostart.center)
  |- (#1) 
  -| ([xshift=-40pt]\tikztotarget.center)
  -- (\tikztotarget)}
  }
}
\title{Bow varieties as symplectic reductions of $T^*(GL_n/P)$}
\author{Yibo Ji\\[1ex]\normalsize with an appendix by Matthias Franz}
\date{}
\begin{document}

\maketitle
\begin{abstract}
    Cherkis bow varieties were introduced as ADHM type description of moduli space of instantons on the Taub-NUT space equivariant under a cyclic group action. They are also models of Coulomb branches of quiver gauge theories of affine type $A$. In this paper, we realize each bow variety with torus fixed points as a symplectic reduction of a cotangent bundle of a partial flag variety by a unipotent group, and find a slice of this action. By this description, we calculate the equivariant cohomology (and ordinary cohomology) of some of them and answer some questions raised before. This also uses a new result about circle-equivariant cohomology proven
in an appendix. We also give an explicit generalized Mirkovic-Vybornov isomorphism for bow varieties in the appendix.
\end{abstract}

\section{Introduction}

Bow varieties were introduced as moduli spaces of $U(n)$-instantons on the Taub-NUT space equivariant under a cyclic group $\mathbb{Z}/l\mathbb{Z}$-action introduced by \cite{Cherkis:2011}. In a sequence of articles (\cite{NakajimaTakayama:2017} and \cite{Takayama:2016}), they are presented as algebraic varieties with hyper-K\"{a}hler structure via a quiver description, carrying a residual action of a torus $\mathbb{T}$. In  recent articles \cite{Nakajima:2021} and \cite{RimanyiShou:2020}, Rimanyi and Nakajima proved that there is a bijection between the $\mathbb{T}$-fixed points on certain bow varieties (that we will call ``pointful'') and ${0,1}$-matrices. We focus on the geometry and equivariant cohomology of this class in this paper. Here is one of the main results.

\begin{theorem}\label{mainthm} (For the proof, please refer to Theorem~\ref{pointful} and the preceding argument.)
    Each pointful bow variety is a symplectic reduction of the cotangent bundle of a partial flag variety by a unipotent group. Moreover, we have a slice of this action. Hence, every pointful bow variety can be viewed as a symplectic subvariety of a cotangent bundle of a flag variety.
\end{theorem}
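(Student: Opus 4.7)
The plan is to begin from the Nakajima--Takayama quiver presentation (\cite{N17}, \cite{Tak16}) of a bow variety as a hyperk\"ahler/GIT quotient of a linear representation of the bow quiver by a product of general linear groups, subject to moment-map equations and a stability condition. For a \emph{pointful} bow variety, the combinatorics of the $\{0,1\}$-matrix fixed points pins down a dimension vector and hence a parabolic subgroup $P\subset GL_n$, together with a splitting of the gauge group into a ``Levi-type'' factor and a complementary unipotent factor $U$ coming from the triangle data at the bow points.

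First, I would perform the symplectic reduction in stages. Reducing by the Levi factor, and invoking the standard description of $T^*(GL_n/P)$ as a symplectic quotient of $T^*\mathrm{Hom}(\mathbb C^n,\mathbb C^n)$ by $P$, identifies the intermediate quotient with $T^*(GL_n/P)$. The residual unipotent gauge group $U$ then acts Hamiltonianly on $T^*(GL_n/P)$, with moment map encoding the remaining triangle equations, so that the bow variety is realized as the symplectic reduction of $T^*(GL_n/P)$ by $U$. This gives the first assertion of the theorem.

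Next, I would construct an explicit slice to the $U$-orbits inside the zero fiber of the moment map. The idea is to put the cotangent vector into a fixed normal form adapted to the parabolic block structure, in the spirit of (and using the generalized version of) the Mirkovic--Vybornov normal form supplied by the appendix. Concretely, one fixes a reference matrix shape complementary to $\mathfrak u=\mathrm{Lie}(U)$, shows using the stability condition of the bow variety that every $U$-orbit on the stable locus meets this shape, and verifies that the intersection consists of a single reduced point by a direct entry-by-entry computation. Transversality then forces the restricted symplectic form to be nondegenerate, embedding the bow variety as a symplectic subvariety of $T^*(GL_n/P)$ and yielding the last assertion.

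The main difficulty I expect is this slice construction: with several bow points the triangle data is entangled with the flag, so choosing a normal form that is simultaneously compatible with every bow point and complementary to $\mathfrak u$ requires careful combinatorial bookkeeping. The generalized Mirkovic--Vybornov isomorphism proven in the appendix is the crucial input here; once its block-wise normal form is in place, matching it against the $\{0,1\}$-matrix combinatorics of the fixed points pins down the dimensions, shows that the $U$-action is free on the stable locus, and confirms that the slice has exactly the correct size.
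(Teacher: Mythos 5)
Your outline captures the right general shape (staged reduction, a unipotent group coming from the triangle data, a matrix normal form serving as a slice), but it contains two substantive errors. First, the reduction by the unipotent group $U_{\vec{\mu}}$ is \emph{not} taken at the zero fiber of the moment map: the bow variety is $T^*(Fl_{\vec{\lambda}})\sslash_{f_{\vec{\mu}}}U_{\vec{\mu}}$ at the nonzero, Whittaker-type level $f_{\vec{\mu}}$ (the restriction of a subdiagonal nilpotent to $\mathfrak{u}_{\vec{\mu}}^{*}$; see Theorem~\ref{base} and Remark~\ref{importantslice}). This is essential: the slice $S_{\vec{\mu}}$ consists of block companion-type matrices whose existence and uniqueness in each $U_{\vec{\mu}}$-orbit is proved precisely on the level set $\mu^{-1}(f_{\vec{\mu}})$, where the identity blocks $I_{v-1}$ forced by $f$ drive the entry-by-entry elimination. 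Reducing at $0$ would give a different (and wrong) space — compare $T^*(G/B)\sslash_{f}U=\mathrm{pt}$ with the reduction at $0$.

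Second, you have the logical dependency on the appendix backwards. The generalized Mirkovic--Vybornov isomorphism of Appendix~\ref{MVyiso} is an \emph{application} of the slice $S_{\vec{\mu}}$ (it matches $S_{\vec{\mu}}$ with generalized affine Grassmannian slices), not an input to its construction. The slice itself is built from Takayama's normal form for a single triangle part (Lemma~\ref{slice_on_triangle}), which presents each triangle as $GL_{v_2}\times S_{v_2,v_1}$; Theorem~\ref{construction1} then assembles these inductively so that the whole handsaw variety becomes $GL_{u_n}\times S_{u_n,\dots,u_1}$, and Theorems~\ref{base} and~\ref{handsawslice} verify by a direct, self-contained linear-algebra induction that each $U$-orbit in the level set meets $S$ in exactly one point. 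Your appeal to stability is also slightly misplaced: stability enters only to establish Takayama's normal form for the triangle parts (full rank of $A$, invertibility of $T$); once one is on the level set of $f_{\vec{\mu}}$, existence and uniqueness of the normalizing unipotent element is unconditional. Finally, note that one also needs Lemma~\ref{add_1} to reduce to the case $u_n>\dots>u_1$ before the slice description applies; your proposal does not address this reduction.
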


In \S~\ref{section4}, we focus on pointful bow varieties whose fixed points correspond to $2$-row $\{0,1\}$-matrices. We calculate the equivariant cohomology (and ordinary cohomology) of some of them (\S~\ref{calforordcoh} and \S~\ref{calforequco}). Then we will see that, unlike for quiver varieties, the localization map to fixed points does not give us an injective map of equivariant cohomology for every bow variety, and the ordinary Kirwan surjectivity does not hold (\S~\ref{calforequco}). This answers a question in \cite{RimanyiShou:2020}. But, using Appendix~\ref{appendix2} (written by Matthias Franz), we can show that equivariant cohomology of $2$-row bow varieties is concentrated in even degrees (Theorem~\ref{secondcal}).

Bow varieties are Coulomb branches (\cite{NakajimaTakayama:2017}). There are other models called generalized affine Grassmannian slices introduced in \cite{BravermanFinkelbergNakajima:2018}. Bow varieties are generalized version of Nakajima type A quiver varieties and generalized affine Grassmannian slices are generalized version of affine Grassmannian slices. In \cite{MirkovićVybornov:2008}, Mirkovic-Vybornov isomorphism are introduced to give explicit isomorphisms between Nakajima type A quiver varieties, slices in the cotangent bundles of flag varieties, and slices in affine Grassmannian. In \S~\ref{MVyiso}, we will give a generalized version of Mirkovic-Vybornov isomorphisms bridgeing the slices appearing in symplectic reductions and generalized affine Grassmannian slices explicitly.

\section{Definition of Bow Varieties}\label{section1}
We follow the construction from \cite{RimanyiShou:2020}. The bow varieties are built from two building blocks. The first is the symplectic vector space $T^*Hom(V_1,V_2)\cong Hom(V_1,V_2)\times Hom(V_2,V_1)$, drawn as
\[\begin{tikzcd}
V_1\arrow[r,shift left,"E"]&V_2\arrow[l,shift left,"D"]
\end{tikzcd}\]
and called a \defn{two-way part}.

The second one is called a \defn{triangle part}, drawn as 
\[\begin{tikzcd}
V_2\arrow[out=120,in=60,loop,looseness=3,"B_2"]&&V_1\arrow[out=120,in=60,loop,looseness=3,"B_{1}"]\arrow[ll,"A"]\arrow[dl,"b"]\\&\mathbb{C}\arrow[lu,"a"]&
\end{tikzcd}\]
where we require $B_2A-AB_1+ab=0$ and the following two ``stability'' conditions:
\begin{itemize}
    \item If $S$ is a subspace of $V_1$ such that $B_1(S)\subset V_1$, $b(S)=0$, and $A(S)=0$, we have $S=0$.
    \item If $S$ is a subspace of $V_2$ such that $B_2(S)\subset S$ and $A(V_1)+a(\mathbb{C})\subset S$, we have $S=V_2$.
\end{itemize}
So the triangle part is a subset of $End(V_1)\times End(V_2)\times Hom(V_1,V_2)\times V_1^*\times V_2$. By \cite[Proposition~2.2]{NakajimaTakayama:2017}, the stability conditions arise from GIT, so we know that a triangle part is a locally closed subset of an affine space. Later in Lemma~\ref{slice_on_triangle}, we will show that each triangle part is actually affine.

Now introduce the \defn{handsaw quiver diagram}
\[\begin{tikzcd}
V_n\arrow[out=120,in=60,loop,looseness=3,"B_n"]&&V_{n-1}\arrow[dl,"b_{n-1}"]\arrow[ll,"A_{n-1}"]\arrow[out=120,in=60,loop,looseness=3,"B_{n-1}"]&&\cdots\arrow[dl]\arrow[ll,"A_{n-2}"]&&V_2\arrow[out=120,in=60,loop,looseness=3,"B_2"]\arrow[ll,"A_2"]\arrow[dl,"b_2"]&&V_1\arrow[out=120,in=60,loop,looseness=3,"B_{1}"]\arrow[ll,"A_1"]\arrow[dl,"b_1"]\\&\mathbb{C}\arrow[ul,"a_n"]&&\mathbb{C}\arrow[ul,"a_{n-1}"] &&\mathbb{C}\arrow[lu]&&\mathbb{C}\arrow[lu,"a_1"]&
\end{tikzcd}\]
We are going to define the \defn{handsaw quiver variety} (\cite{Nakajima:2012}). The definition will be inductive, and the existence of two functions in the definition just makes the induction step easily described. A handsaw quiver variety and its attached two functions will be denoted by \[(HS(v_n,\cdots,v_1),B_{int,v_n,\cdots,v_1},B_{end,v_n,\cdots,v_1})\] where $B_{int,v_n,\cdots,v_1}$ (resp. $B_{end,v_n,\cdots,v_1})$) is $\mathfrak{gl}_{v_1}$-valued (resp. $\mathfrak{gl}_{v_n}$-valued) function on $HS(v_n,\cdots,v_1)$. It will be equipped with a natural $GL_{v_n}\times GL_{v_1}$ action. When $n=2$, it's just a triangle part equipped with standard $B_1,B_2$ and the group action just comes from $V_1$ and $V_2$.

When $n>2$, we construct it via a GIT quotient:
\[(HS(v_n,v_{n-1})\times_{B_{int,v_n,v_{n-1}}=B_{end,v_{n-1},\cdots,v_1}} HS(v_{n-1},\cdots, v_1)\sslash GL_{v_{n-1}}, B_{int,v_{n-1},\cdots,v_1},B_{end,v_n,v_{n-1}}) \]
The action of $GL_{v_n}$ (resp. $GL_{v_1}$) comes from $HS(v_n,v_{n-1})$ (resp. $HS(v_{n-1},\cdots,v_1)$).

\begin{remark}
    For simplification, we call $B_{end,v_n,v_{n-1}}$ (resp. $B_{int,v_{n-1},\cdots,v_1}$)the \defn{end function} (resp. the \defn{initial function}) when there is no ambiguity.
\end{remark}

\begin{remark}
 Because we know these function explicitly for the starting traingles and we build handsaw quiver varieties via GIT quotient step by step, the two functions in the definition of handsaw varieties are determined by the variety itself and the $GL_{v_n} \times GL_{v_1}$ action. Actually, we will later show that there is a symplectic structure on the handsaw quiver varieties and the $GL_{v_n} \times GL_{v_1}$ action is Hamiltonian. The two functions in the definition are just moment maps attached to the Hamiltonian action. We are using a tuple to define it to make the statement of the construction easier. From now on, we can just use the variety itself instead of the tuple to refer to the bow variety and call the first (resp. second) function the initial function (resp. end function). In \S~\ref{section2}, we will show that these two functions come from moment maps.
\end{remark}

\begin{remark}
    A handsaw variety is just a bow variety defined in \cite{NakajimaTakayama:2017} consisting only of triangles.
\end{remark}

\defn{Bow varieties} in this paper are defined to be
\[M(\vec{v},\vec{w})\times_{\pi_{\vec{w}}=B_{end,u_n,\cdots, u_1,u_0}} HS(u_n,\cdots,u_1,u_0)\sslash GL_{u_n}\]
where $M(\vec{v},\vec{w})$ is the standard notation for type $A$ Nakajima quiver varieties (with the stability parameter $(-1,-1,\cdots,-1)$), $\vec{w}=(0,\cdots,0,u_n)$ is the dimension vector of the framed vector spaces and $\mu_{\vec{w}}$ is the moment map for the $GL_{u_n}$ action on it. Bow varieties in this paper are those in \cite{RimanyiShou:2020} (which is the case that complex parameter is $\Vec{0}$ and the stability parameter is $(-1,\cdots,1)$ in \cite{NakajimaTakayama:2017}) represented by so-called $\defn{separated diagram}$ (see \cite[Remark~6.17]{RimanyiShou:2020}).

\begin{remark}
    When talking about general handsaw varieties, we use $(v_n,\cdots, v_1)$ to denote the dimension vector. When talking about bow varieties, we use $(u_n,\cdots ,u_1,u_0)$ to denote the dimension vector of the handsaw variety attached to it.
\end{remark}
If a bow variety satisfies all the following conditions, we will call it a \defn{good bow variety}:
\begin{itemize}
    \item $M(\vec{v},\vec{w})$ is a cotangent bundle $T^*Fl(l_1,\cdots ,l_m)$ of a flag variety;
    \item $u_n\geq u_{n-1}\geq \cdots \geq u_1 \geq u_0=0$.
\end{itemize}
If $u_i,r_j,1\leq i\leq m,1\leq j\leq n$ satisfy the following additional conditions, we will call it a \defn{pointful bow variety}  
\begin{itemize}
    \item $c_i:=u_{i+1}-u_i\leq m, \forall 1\leq i\leq n-1$;
    \item $r_{j+1}:=l_{j+1}-l_j\leq n, \forall 0\leq j\leq m-1$ where $l_0$ is set to be $0$;
    \item there is a $m\times(n-1)$ matrix with ${0,1}$ entries such that $r_i$ is the sum of the $i$-th row and $c_j$ is the sum of the $j$-th column.
\end{itemize}

If a handsaw quiver variety $HS(u_n,\cdots,u_1,u_0)$ satisfies 
 $u_n\geq u_{n-1}\geq \cdots \geq u_1 \geq u_0=0$, 
we will call it a \defn{good handsaw quiver variety}.

 Every pointful bow variety in \cite{RimanyiShou:2020} is isomorphic to a bow variety representing by a separated diagram via the Hanany-Witten isomorphisms (see \cite[\S~3.3]{RimanyiShou:2020} and \cite[Proposition~7.1]{NakajimaTakayama:2017}). So, using the definition, we won't miss any pointful bow variety up to isomorphism.

The definitions of good and pointful bow varieties come from \cite[Assumption~2.4]{RimanyiShou:2020}. Notice that there is a $\mathbb{C}^\times$ action on a handsaw variety for each $\mathbb{C}$ in the triangle. We add one more circle action with weight $1$ on $B_i$ and $b_i$ and weight $0$ on all other coordinates. This in all gives us a torus $\mathbb{T}=(\mathbb{C}^\times)^n$ acting on $HS(v_n,\cdots,v_1)$. By \cite[Proposition~4.9]{RimanyiShou:2020}, a bow variety defined in this paper has a torus fixed point if and only if it is a pointful bow variety. Good bow varieties are defined to be good because they can be realized as symplectic reductions of $T^*(GL_N/P)$ for some $N$ and a parabolic subgroup $P$ (as will be shown in \S~\ref{section2}).

\begin{remark}\label{compare}
    In \cite{RimanyiShou:2020}, a pointful bow variety (up to Hanany-Witten isomorphisms; see \cite[\S~2.6 and \S~3.3 ]{RimanyiShou:2020}) is determined by a row vector $\vec{r}=(r_1,\cdots,r_m)$ and a column vector $\vec{c}=(c_1,\cdots,c_n)$ such that there is a $\{0,1\}$ matrix of size $m\times n$ such that the sum of $i$-th row is $r_i$ and the sum of the $j$-th column is $c_j$. In our language, it is of the form
    \[T^*Fl(l_1,\cdots ,l_m)\times_{\pi=B_{end,u_{n},\cdots, u_1,u_0}} HS(u_{n},\cdots,u_1,u_0)\sslash GL_{u_n} \]
    where
    \begin{itemize}
        \item $l_i=\underset{1\leq j\leq i}{\sum}r_j$
        \item $u_{i}=l_m-\underset{i< j\leq n}{\sum}c_j $. In particular, $u_0 = l_m - \underset{0< j\leq n}{\sum}c_j = \underset{1\leq j\leq m}{\sum}r_j- \underset{0< j\leq n}{\sum}c_j = 0$. 
        Moreover, since $l_m = \sum_{1\leq i \leq m} r_i = \sum_{1\leq i \leq n} c_i$, we know that $u_{i}$ can be written as $\sum_{1\leq j\leq i} c_i $ as well.
        \item $\pi$ is the moment map with respect to the $GL_{l_n}$ action.
    \end{itemize}
\end{remark}

\begin{proposition}\label{triangleaffine}
The triangle part
\[
\left\{ (B_2, B_1, A, a, b) : 
\begin{array}{l}
    B_2 \in \operatorname{End}(V_2), \; B_1 \in \operatorname{End}(V_1), \; A \in \operatorname{Hom}(V_1, V_2), \\
    a \in V_2, \; b \in V_1^*, \; B_2 A - A B_1 + ab = 0, \\
    \text{such that:} \\
    \quad \bullet \; \text{If } S \text{ is a subspace of } V_1 \text{ with } B_1(S) \subset S, \; b(S) = 0, \; A(S) = 0, \text{ then } S = 0, \\
    \quad \bullet \; \text{If } S \text{ is a subspace of } V_2 \text{ with } B_2(S) \subset S \text{ and } A(V_1) + a(\mathbb{C}) \subset S, \text{ then } S = V_2 
\end{array}
\right\}
\]is affine.
\end{proposition}

\begin{proof}

This is the diagram we have 
\[\begin{tikzcd}
V_2\arrow[out=120,in=60,loop,looseness=3,"B_2"]&&V_1\arrow[out=120,in=60,loop,looseness=3,"B_{1}"]\arrow[ll,"A"]\arrow[dl,"b"]\\&\mathbb{C}\arrow[lu,"a"]&
\end{tikzcd}\]

By \cite[Lemma~2.18]{Takayama:2016}, we know that $A$ must be of full rank.

Let $v_i:=\dim V_i$. Notice that when take the dual of the above diagram, we will get the following diagram switching the dimension of $V_i$:
\[\begin{tikzcd}
V_1^\vee\arrow[out=120,in=60,loop,looseness=3,"B_1^\vee"]&&V_2^\vee\arrow[out=120,in=60,loop,looseness=3,"B_{2}^\vee"]\arrow[ll,"A^\vee"]\arrow[dl,"a^\vee"]\\&\mathbb{C}^\vee\arrow[lu,"b^\vee"]&
\end{tikzcd}.\]

To show the triangle part is affine, it reduces to consider the case that $v:=v_2-v_1\geq 0$ by the above dual operation. 

Since $v_2 - v_1 \geq 0$ and $A : V_1 \to V_2$ is of full rank, we know that $ker(A) = 0$. This tells that the first stability condition automatically holds. So we only need to deal with the second one. 

Denote $A(V_1) \cup \left( \underset{0\leq i\leq k}{\bigcup}B_2^{i} a(1)\right)$ by $W_k$. By the construction, we know that $W_{k},k\in \mathbb{N}$ is tower of vector spaces and $\dim W_k + 1\geq \dim W_{k+1}$. Since $B_2A - AB_1 + ab= 0$, we have another characterization
\[W_{k+1} = B_2W_{k} + AV_1.\] Hence, we know that once $W_k = W_{k+1}$, this tower stabilizes at this space. Choose $k_0$ to be the smallest number such that $W_{k_0} = W_{k_0+1}$. Then we know that $\dim W_{k_0} = \dim V_1 + k_0 + 1$ and $W_{k_0}$ is spanned by $AV_1$ and $B_2^{i}a(1), 0\leq i\leq k_0$.  The second stability condition is equivalent to say that $W_{k_0} = V_2$. Hence we know that $k_0 = v-1$ and $\begin{pmatrix} a(1)&B_2a(1)&\cdots &B_2^{v-1}a(1)&A \end{pmatrix}$ is of full rank. Since this matrix is a square matrix, we know that the stability condition could be transformed to the following
\[\mathcal{T}:=\begin{pmatrix} a(1)&B_2a(1)&\cdots &B_2^{v-1}a(1)&A \end{pmatrix}\in GL(V_2).\]

\begin{remark}
    When $v=0$, the above matrix is just $A$.
\end{remark}
Using $T$'s columns as a basis of $V_2$, we have the following lemma:

\begin{lemma}\cite[Prop~2.9]{Takayama:2016}\label{slice_on_triangle}
    The triangle part is isomorphic to 
    \begin{itemize}
        \item $GL_{v_2}\times \mathfrak{gl}_{v_1} \times T^*(\mathbb{C}^{v_1})$ when $v=0$.
        \item $GL_{v_2}\times \left\{M=\begin{pmatrix}0_{1 \times (v - 1)}&\gamma_1&-b\\ I_{v-1}&\gamma_2&0_{v-1\times v_2}\\ 0_{v_1 \times (v-1)}&\gamma_3&B_1\end{pmatrix}:\parbox{0.38\columnwidth}{$\gamma=\begin{pmatrix}\gamma_1\\\gamma_2\\\gamma_3\end{pmatrix}\in \mathbb{C}^{v_2},$\\ $b\in (\mathbb{C}^{v_1})^*, B_1\in End_\mathbb{C}(\mathbb{C}^{v_1})$}\right\}$ when $v>0$.
    \end{itemize} 
  
\end{lemma}
\begin{proof}
   We identify he matrix $\mathcal{T}$ as the element in $GL_{v_2}$ in both cases. 

   Consider the case that $v>0$. The bijection between the triangle part and the affine variety listed above is as follow:
   \begin{itemize}
       \item The first column of $\mathcal{T}$ is $a$.
       \item The last $v_2$ columns of $\mathcal{T}$ is $A$.
       \item $B_1$ is $B_1$ and $b$ is $b$.
       \item $B_2$ is $\mathcal{T}M\mathcal{T}^{-1}$.
   \end{itemize}

   Let us consider the case that $v=0$. The bijection between the triangle part and the affine variety listed above is as follow:
   \begin{itemize}
       \item $\mathfrak{gl}_{v_1}$ corresponds to $B_1$.
       \item $A$ is $\mathcal{T}$ .
       \item $a$ corresponds to $\mathbb{C}^{v_1}$.
       \item $b\mathcal{T}$ corresponds to $(\mathbb{C}^{v_1})^*$.
       \item $B_2=\mathcal{T}(B_1+ab\mathcal{T})\mathcal{T}^{-1}$
   \end{itemize}

   Here some explanation for why $M$ is of the shape described above. 
   
   The first $v - 1$ columns of $M$ are of the shape described because the first $v$ elements of the basis chosen are $a(1), B_2 a(1),\cdots, B_2^{v-1}a(1) $ and we are describing the action of $B_2$.

   The last $v_1$ columns of $M$ are of the shape described because $B_2 A = A B_1 - ab$.

\end{proof}
(Finishing Prop~\ref{triangleaffine}) Now, from the above lemma, we know that the triangle part is affine.
\end{proof}
\begin{remark}\label{action_on_triangle}
    In both cases, we are representing the triangle part as 
    \[GL_{v_2}\times S\]
    where $S$ is affine. The natural action of $GL_{v_2}\times GL_{v_1}$ is componentwise. On $GL_{v_2}$, it's exactly \[(g_2,g_1)(h)=(I_{v}\oplus g_1) h g_2^{-1}.\]  On $S$, the $GL_{v_2}$ action is trivial. Notice that the element $M$ in $S$ is characterized by
    \[ B_2 T = TM. \]
    The $GL_{v_2}$ action on $B_2$ (resp. T) is conjugation (left multiplication. Thus $M$ is invariant under $GL_{v_2}$ action. 
    
    The function $B_1$ is in $Mat_{v_1\times v_1}(\mathbb{C}[S])$ and $B_1(g_1(x))=g_1B_1(x)g_1^{-1}$.
    There exists $\tilde{B}_2=M\in Mat_{v_2\times v_2}(\mathbb{C}[S])$ such that $B_2(g_2,x)=g_2\tilde{B}_2g_2^{-1}$.
    
    \end{remark}

We can generalize the above remark to give a nice description of good handsaw varieties.

\begin{theorem}\label{construction1}
\begin{itemize}
    \item Every good handsaw variety $HS(u_n,\cdots, u_1,u_0)$ is isomorphic to
\[GL_{u_n}\times S\]
where $S$ is affine.
\item The natural $GL_{u_n}$ action on $GL_{u_n}\times S$ is componentwise, where the action on $GL_{u_n}$ is right multiplication by the inverse and the action on $S$ is trivial.
\item There exists a matrix function $\tilde{f}\in Mat_{u_n\times u_n}(\mathbb{C}[S])$ such that the end function $f$ of this handsaw variety satisfies $f(g,x)=g\tilde{f}(x)g^{-1}$.
\item In particular, if $u_n>u_{n-1}>\cdots>u_2 >u_1>u_0 = 0$, we can realize $S$ as a subvariety of $\mathfrak{gl}_{u_n}$.
\end{itemize}
\end{theorem}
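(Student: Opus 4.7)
My plan is to induct on $n$, mirroring the inductive construction of handsaw varieties. The base case $n = 2$ is the triangle part $HS(u_2, 0)$; here Lemma~\ref{slice_on_triangle} gives $HS(u_2, 0) \cong GL_{u_2} \times X$ with $X$ affine, and Remark~\ref{action_on_triangle} supplies both the right-multiplication-by-inverse action of $GL_{u_2}$ on itself and the desired form $B_2 = g\,\tilde{B}_2(x)\,g^{-1}$ of the end function.

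For the inductive step I would write
\[
HS(u_n, \ldots, u_1) = \bigl( HS(u_n, u_{n-1}) \times_{B_{int} = B_{end}} HS(u_{n-1}, \ldots, u_1) \bigr) \sslash GL_{u_{n-1}}
\]
and substitute: by the triangle case $HS(u_n, u_{n-1}) \cong GL_{u_n} \times Y$, and by induction $HS(u_{n-1}, \ldots, u_1) \cong GL_{u_{n-1}} \times Z$ with end function $(h,z) \mapsto h\,\tilde{f}_Z(z)\,h^{-1}$. Since $GL_{u_{n-1}}$ acts on the right factor by right-multiplication-by-inverse on its $GL_{u_{n-1}}$-coordinate and trivially on $Z$, the locus $\{h = I\}$ is a global section and therefore a slice for the diagonal $GL_{u_{n-1}}$-action on the fiber product. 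Restricting to this slice, the fiber product condition collapses to $\beta(y) = \tilde{f}_Z(z)$, where $\beta : Y \to \mathfrak{gl}_{u_{n-1}}$ is the initial function of the triangle (depending only on $Y$ by Remark~\ref{action_on_triangle}). Setting $X := \{(y,z) \in Y \times Z : \beta(y) = \tilde{f}_Z(z)\}$ produces a closed (hence affine) subvariety of $Y \times Z$ and yields $HS(u_n, \ldots, u_1) \cong GL_{u_n} \times X$. The $GL_{u_n}$-factor comes only from the triangle, so both the componentwise action and the end function $\tilde{f}(y,z) := \tilde{B}_2(y)$ are inherited directly.

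For the strict-inequality bullet I strengthen the induction: when $u_n > u_{n-1} > \cdots > u_1$, each triangle in the recursion has $v = v_2 - v_1 > 0$, so Lemma~\ref{slice_on_triangle} returns an explicit block matrix $\tilde{B}_2(y) \in \mathfrak{gl}_{u_n}$ whose $u_{n-1} \times u_{n-1}$ block is exactly the triangle's $B_1$. The fiber product condition $\beta(y) = \tilde{f}_Z(z)$ identifies this block with the inductively obtained embedding of the inner slice into $\mathfrak{gl}_{u_{n-1}}$, so $(y,z) \mapsto \tilde{B}_2(y)$ realizes $X$ as the subvariety of $\mathfrak{gl}_{u_n}$ cut out by the nested block conditions; injectivity holds because $\tilde{B}_2(y)$ recovers the parameters $(\gamma,b,B_1)$ describing $Y$, and the inductive embedding recovers $z$ from $B_1$. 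The main technical burden will be keeping straight the two intertwined $GL_{u_{n-1}}$-actions (the action on the triangle's $GL_{u_n}$-factor by $h \cdot g = (I_v \oplus h)\,g$ together with a nontrivial action on $Y$, versus right-multiplication-by-inverse on the second handsaw) when verifying that $\{h=I\}$ is a genuine slice and that the resulting $GL_{u_n}$-action on $GL_{u_n} \times X$ is truly componentwise; the nested block bookkeeping for the embedding into $\mathfrak{gl}_{u_n}$ is the only other place the argument really needs care.
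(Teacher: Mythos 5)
Your proposal is correct and follows essentially the same route as the paper. The paper constructs an explicit isomorphism of the fiber product onto $GL_{u_{n-1}}\times\bigl(HS(u_n,u_{n-1})\times_{B_{int}=\tilde f} X\bigr)$ via $(y,(g,x))\mapsto(g,(g(y),x))$ and then quotients, which is just the chart-change version of your observation that $\{h=I\}$ is a global slice for the free $GL_{u_{n-1}}$-action; the equivariance identity $B_{int}(\sigma\cdot y)=\sigma B_{int}(y)\sigma^{-1}$ from Remark~\ref{action_on_triangle} is exactly what makes both arguments go through. Your handling of the $GL_{u_n}$-action, the end function $\tilde f(y,z):=\tilde B_2(y)$ coming solely from the outer triangle, and the nested block embedding into $\mathfrak{gl}_{u_n}$ in the strict-inequality case all match the paper's treatment, with the fiber-product condition replacing the $B_1$-block of $Y$ by the inductively embedded slice.
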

\begin{proof}
    We proceed by induction. When $n=2$, this is exactly 
 Lemma~\ref{slice_on_triangle} and Remark~\ref{action_on_triangle}.

    Suppose we have shown that $HS(u_{n-1},\cdots, u_1)\cong GL_{u_{n-1}}\times S$.  The natural $GL_{u_{n-1}}$ action on $GL_{u_{n-1}}\times S$ is pointwise, where the action on $GL_{u_{n-1}}$ is right multiplication by the inverse and the action on $S$ is trivial. 

    We have the following decomposition: \begin{align*}&HS(u_n,u_{n-1})\times_{B_{int,u_n,u_{n-1}}=f}(GL_{u_{n-1}}\times S)\\=\ & 
    GL_{u_{n-1}}\times (HS(u_n,u_{n-1})\times_{B_{int,u_n,u_{n-1}}=\tilde{f}}(S))\end{align*}
    where the $GL_{u_{n-1}}$ action only acts on the first component.
    The map sends $(y,(g,x))$ to $(g,(g(y),x))$. To show it's an isomorphism, it suffices to show that it's well defined, i.e. if $B_{int,u_n,u_{n-1}}(y)=f(g,x)$, we have $B_{int,u_n,u_{n-1}}(g(y))=\tilde{f}(x)$. By definition, we know that $\tilde{f}(x)=gf(g,x)g^{-1}$. By Remark~\ref{action_on_triangle}, we know that $B_{int,u_n,u_{n-1}}(g(y))=gB_{int,u_n,u_{n-1}}(y)g^{-1}$. The combination of these two identities is exactly what we need. Hence the map given above is an isomorphism.
    
    Notice that $h(y,(g,x))$ is $(h(y),(gh^{-1},hx))=(hy,(gh^{-1},x))$ (the $GL_{u_{n-1}}$ action on $S$ is trivial). It's mapped to $(gh^{-1},(gy,x))$. So the action on the second component is trivial. After we quotient by this action, we know that $HS(u_n,u_{n-1},\cdots,u_1)$ is isomorphic to 
    \[HS(u_n,u_{n-1})\times_{B_{int,u_n,u_{n-1}}=\tilde{f}}(S) \]

    By Lemma~\ref{slice_on_triangle}, we know that $HS(u_n,u_{n-1})$ is isomorphic to $GL_{u_n}\times Y$ for some affine variety $Y$, and $B_{int,u_n,u_{n-1}}$ comes from $\mathbb{C}[Y]$. By associativity of fiber product, we know that $HS(u_n,u_{n-1},\cdots,u_1,u_0)$ is isomorphic to 
    \[GL_{u_n}\times (Y\times_{B_{int,u_n,u_{n-1}}=\tilde{f}}(S)) \]
    The latter part is an affine variety.
    
    The end function on this handsaw variety is $B_{end,u_n,u_{n-1}}$. By Remark~\ref{action_on_triangle}, there exists $\tilde{B}_{end,u_n,u_{n-1}}\in Mat_{u_n\times u_n}(\mathbb{C}[Y])$ such that \[B_{end,u_n,u_{n-1}}(g,y)=g\tilde{B}_{end,u_n,u_{n-1}}(y) g^{-1}\]. 
    
    By the statement of action on $HS(u_n,u_{n-1})$ in Remark~\ref{action_on_triangle}, we know the action of $GL_{u_n}$ on $GL_{u_n}$ is right multiplication by the inverse, and the action on the affine piece is trivial. 

    Notice that when $u_n>u_{n-1}>\cdots >u_1$, we know $S$ is a subvariety of $\mathfrak{gl}_{u_{n-1}}$ and  $Y$ is 
    \[\left\{M=\begin{pmatrix}0_{1\times (v-1)}&\gamma_1&b\\ I_{v-1}&\gamma_2&0_{v-1\times v_2}\\ 0_{v_1\times {v-1}}&\gamma_3&B_1\end{pmatrix}\in \mathfrak{gl}_{u_n}:\parbox{0.43\columnwidth}{$\gamma=\begin{pmatrix}\gamma_1\\\gamma_2\\\gamma_3\end{pmatrix}\in \mathbb{C}^{u_n}$,\\ $b\in (\mathbb{C}^{u_{n-1}})^*, B_1\in End_\mathbb{C}(\mathbb{C}^{u_{n-1}})$}\right\}.\]
    The fiber product $(Y\times_{B_{int,u_n,u_{n-1}}=\tilde{f}}(S))$ just replaces $B_1$ with $S$. So it is again a subvariety of $\mathfrak{gl}_{u_n}$.
\end{proof}
\begin{remark}\label{notation_slice}
    Let us denote $S$ from the above theorem by $S_{u_n,\cdots,u_1,u_0}\subset \mathfrak{gl}_{u_n}$. In \S~\ref{section4}, we will give an explicit description of it.
\end{remark}

\begin{corollary}
    Every good bow variety can be realized as a fibre product of $T^*Fl$ and some affine variety. 
    
    In particular, suppose we are given a bow variety of the following form:
    \[T^*Fl(l_1,l_2,\cdots,l_m)\times_{\pi=B_{end}}HS(u_n,\cdots,u_1,u_0) \] where 
    \begin{itemize}
        \item $l_1\leq l_2\leq \cdots \leq l_m=u_n$;
        \item When talk about the flag variety in this paper, we use the surjective flags to present each point inside it, i.e. each point corresponds to $\mathbb{C}^{l_m}\twoheadrightarrow \cdots \twoheadrightarrow \mathbb{C}^{l_1} $;
        \item $\pi$ is the moment map of the $GL_{l_m}$ action on $T^*Fl(l_1,l_2,\cdots,l_m)$;
        \item $u_n>\cdots>u_1 > u_0 = 0$.
    \end{itemize}
    Then the bow variety is isomorphic to 
    \[\pi^{-1}(S_{u_n,\cdots,u_1,u_0}) \]
    where $\mu$ is the moment map with respect to the $GL_{u_n}$ action on $T^*(Fl(l_1,\cdots,l_m))$.
\end{corollary}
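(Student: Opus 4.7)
The plan is to apply Theorem \ref{construction1} to trivialize the handsaw factor, and then observe that the fiber-product condition reduces to an inverse-image condition on $T^*Fl$.

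First, under the hypothesis $u_n > u_{n-1} > \cdots > u_1$, Theorem \ref{construction1} gives an isomorphism $HS(u_n,\ldots,u_1) \cong GL_{u_n} \times X_{u_n,\ldots,u_1}$ with $X := X_{u_n,\ldots,u_1}$ realized as a locally closed subvariety of $\mathfrak{gl}_{u_n}$, on which $GL_{u_n}$ acts only through the first factor and freely there. Tracing the construction back to the base case in Lemma \ref{slice_on_triangle}, where the end function is $B_2 = T M T^{-1}$ with $M$ itself providing the affine coordinate, I would check by induction on $n$ that the matrix-valued function $\tilde{f}$ from Theorem \ref{construction1} is simply the inclusion $X \hookrightarrow \mathfrak{gl}_{u_n}$. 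In particular, under this identification, $B_{end}(g,x) = g\,x\,g^{-1}$.

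Substituting into the definition of the bow variety, the fiber product is
\[\{(p,g,x) \in T^*Fl(l_1,\ldots,l_m) \times GL_{u_n} \times X : \mu(p) = g x g^{-1}\},\]
and one quotients by the diagonal $GL_{u_n}$-action: standard on $T^*Fl$, and as described in Theorem \ref{construction1} on the handsaw side (trivial on $X$, free on the $GL_{u_n}$-factor). Freeness on that factor makes the closed subvariety cut out by $g = I$ a global slice for the diagonal action. On that slice the defining equation collapses to $\mu(p) = x$, so the projection $(p,I,x) \mapsto p$ identifies the slice with $\{p \in T^*Fl : \mu(p) \in X\} = \mu^{-1}(X_{u_n,\ldots,u_1})$, giving the desired isomorphism.

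The main subtlety is the identification $\tilde{f} = \iota$: this is a routine induction, but it requires carefully keeping track of how the matrix $M$ of Lemma \ref{slice_on_triangle} sits inside $\mathfrak{gl}_{u_n}$ at each stage of the iterated fiber product in Theorem \ref{construction1}. A secondary bookkeeping step is to verify that the conventions for the $GL_{u_n}$-actions on $T^*Fl$ and on the handsaw are compatible, via the $GL_{u_n}$-equivariances of $\mu$ and of $B_{end}$, so that the diagonal action really preserves the fiber-product condition and the slice argument produces an isomorphism of varieties rather than merely a bijection on points.
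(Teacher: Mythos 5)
Your proposal is correct and follows essentially the same route as the paper's (very terse) proof: trivialize the handsaw factor as $GL_{u_n}\times X$ via Theorem~\ref{construction1}, observe that the end function is conjugation applied to the inclusion $X\hookrightarrow\mathfrak{gl}_{u_n}$, and gauge away the free $GL_{u_n}$-factor so the fiber-product condition becomes $\mu(p)\in X$. The paper compresses all of this into the single identity $M(\vec v,\vec w)\times_{\mu=B_{end}}(GL_{u_n}\times X)\sslash GL_{u_n}=M(\vec v,\vec w)\times_{\mu=B_{end}}X$; your slice-at-$g=I$ argument and the verification that $\tilde f$ is the inclusion are exactly the details it leaves implicit.
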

\begin{proof}
    This will come from the construction of the bow variety.

Since there is no ambiguity, we just use $S$ to represent $S_{u_n,\cdots,u_1,u_0}$.

    In general, the bow variety is just 
    \[M(\vec{v},\vec{w})\times_{\pi_{\vec{w}}=B_{end}} (GL_{u_m}\times  S)\sslash GL_{u_m}=M(\vec{v},\vec{w})\times_{\pi_{\vec{w}}=B_{end}} S\]
    When we are in the special case from Theorem~\ref{construction1}, we know that $HS(u_n,\cdots, u_1)$ is $GL_{l_m}\times S$ where $S$ is a subvariety of $\mathfrak{gl}_{u_n}=\mathfrak{gl}_{l_m}$. So, by the construction, we can see that the bow variety is exactly $\mu^{-1}(S) $.
\end{proof}
In fact, by the following, for a good bow variety, we can always find another good bow variety isomorphic to it whose corresponding handsaw variety $HS(u_n,u_{n-1},\cdots,u_1,u_0)$ satisfies $u_n>u_{n-1}>\cdots >u_1 > u_0 = 0$. So we know that every good bow variety can be realized as a closed subvariety of a cotangent bundle of a flag variety.

\begin{lemma}\label{add_1}
    A good bow variety \[T^*Fl(l_1,l_2,\cdots,l_m)\times_{\pi=B_{end}}HS(u_n,\cdots,u_1,u_0),\]
with $1\leq i\leq m+1$, is isomorphic to 
 \[T^*Fl(l_1,l_2,\cdots,l_{m},l_m+n)\times_{\pi=B_{end}}HS(u_n+n,\cdots,u_i+i,\cdots,u_1+1, u_0).\]
\end{lemma}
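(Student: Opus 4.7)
My approach is to construct an explicit isomorphism between the two bow varieties using the slice descriptions from Lemma~\ref{slice_on_triangle} and Theorem~\ref{construction1}, combined with the fibre-product presentation of the preceding corollary. The guiding principle is that the enlarged handsaw is strictly decreasing by construction, so Theorem~\ref{construction1} gives its associated slice concretely inside $\mathfrak{gl}_{u_n+n-1}$, and the extra flag step, whose dimension equals the new ambient dimension, contributes no new moduli.

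First, I would rewrite both sides as fibre products of cotangent bundles of flag varieties over affine slices. The left-hand side becomes $\mu^{-1}(X_{u_n,\dots,u_1})$ (in the appropriate sense when the $u_j$ are not strictly decreasing), where $\mu$ is the moment map for the $GL_{u_n}$-action on $T^{*}Fl(l_1,\dots,l_m)$; the right-hand side is treated symmetrically with the enlarged slice $X_{u_n+n-1,\dots,u_i+i-1,\dots,u_1}\subset \mathfrak{gl}_{u_n+n-1}$ and the moment map for the $GL_{u_n+n-1}$-action on $T^{*}Fl(l_1,\dots,l_m,l_m+n-1)$.

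Second, I would build the isomorphism by enlarging the ambient vector space. A point on the left carries ambient space $\mathbb{C}^{u_n}$; I embed this into $\mathbb{C}^{u_n+n-1}$ by appending an $(n-1)$-dimensional complement with chosen basis the canonical cyclic chain $a_n,\,B_n a_n,\,\dots,\,B_n^{n-2}a_n$ coming from the top triangle of the handsaw. The new top flag level is declared to be the full enlarged ambient space, which is why it contributes no free parameters. The enlarged handsaw matrix data is then assembled block-wise, following exactly the matrix normal form of Lemma~\ref{slice_on_triangle}, so that the dimension shifts $u_j\mapsto u_j+(j-1)$ correspond to inserting the standard Jordan-shift blocks $\bigl(\begin{smallmatrix}0 & \gamma_1\\ I & \gamma_2\end{smallmatrix}\bigr)$ at the appropriate levels. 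For the inverse, given data on the right-hand side the same cyclic chain (now living canonically inside the enlarged ambient space by the stability condition on the top triangle) recovers the complement uniquely; one reads off the original handsaw and flag by restricting.

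Finally, I would verify that the moment map equations, the stability conditions, and the residual group actions are compatible on both sides, i.e.\ that the moment map for $GL_{u_n+n-1}$ on the enlarged $T^{*}Fl$ cuts out precisely the image of the embedding, and that the symplectic reduction by the enlarged gauge group matches the one on the left. The main obstacle is exactly this bookkeeping: a careful block-matrix comparison of the two moment maps and of the induced $\mathbb{T}$-actions. The recursive construction from Theorem~\ref{construction1} together with the explicit Lemma~\ref{slice_on_triangle} normal form should reduce it to a finite (if lengthy) matrix verification, with no genuinely new geometric input required.
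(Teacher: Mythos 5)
Your overall strategy -- pass to the slice descriptions of Theorem~\ref{construction1}, and use the canonical bases supplied by the stability conditions to relate the two ambient spaces -- is the same one the paper uses, and the final ``block-matrix bookkeeping'' you defer to is indeed where the paper's proof also ends. However, two of your concrete steps do not work as stated. First, the $(n-1)$ extra dimensions of $\mathbb{C}^{u_n+n-1}$ are \emph{not} a complement spanned by a single cyclic chain $a_n, B_na_n,\dots,B_n^{n-2}a_n$ inside the top triangle: the dimension shift $u_j\mapsto u_j+(j-1)$ increases each consecutive gap $u_{j+1}-u_j$ by exactly $1$, so the new directions are distributed one per triangle (one new cyclic generator $a_j(1)$ at each level $j=2,\dots,n$), which is precisely what the index sets $I$ and $J$ in the paper's proof record (their complements $\{u_n+n+1-u_j-j\}$ and $\{u_n+n-u_i-i\}$ each have $n-1$ elements, one per triangle). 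Moreover the vectors you name live in the original $\mathbb{C}^{u_n}$, so they cannot span a complement of $\mathbb{C}^{u_n}$ inside $\mathbb{C}^{u_n+n-1}$; the embedding you want cannot be defined this way.

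Second, your assertion that the new flag step ``contributes no new moduli'' because it is ``the full enlarged ambient space'' is not a proof: the step $\mathbb{C}^{l_m+n-1}\twoheadrightarrow\mathbb{C}^{l_m}$ together with its cotangent direction is genuine extra data. What kills it is the rank argument, which is the one real geometric input of the paper's proof and is missing from yours: for any $a\in T^*Fl(l_1,\dots,l_m,l_m+n-1)$ the moment map value $\mu(a)$ factors through the $l_m$-dimensional quotient, so $\mathrm{rank}\,\mu(a)\le l_m$, while every matrix in the enlarged slice $X_{u_n+n-1,\dots,u_i+i-1,\dots,u_1}$ contains an identity $(I,J)$-submatrix of size $u_n=l_m$ and hence has rank $\ge l_m$. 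Equality then rigidifies the new step completely: the columns indexed by $J$ give a canonical framing of the $l_m$-dimensional quotient, in which the induced endomorphism is read off as a point of $X_{u_n,\dots,u_1}$ (including the $T^*(\mathbb{C}^{u_j})$ factors at the triangles with $u_{j+1}=u_j$, a case your block-insertion picture also needs to address separately, since there $X_{u_n,\dots,u_1}$ is not a subvariety of $\mathfrak{gl}_{u_n}$). I would recommend reorganizing your argument in the paper's direction -- define the map from the enlarged variety to the original one using the rank bound and the identity submatrix -- rather than trying to build the enlargement first.
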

\begin{proof}
     Consider the construction of $S_{u_n+n,\cdots,u_i+i,\cdots,u_1 + 1, u_0}$. 

     Consider the following two set
     \[I:=\{i:1\leq i\leq u_n+n,i\neq u_n+n+2-u_j-j,1\leq j\leq n\}\]
     \[J:=\{j:1\leq j\leq u_n+n,j\neq u_n+n-u_i-i,0\leq i\leq n-1\}\]
     Notice that $\#I=\#J=u_n$ and the $(I,J)$ submatrix of any matrix in $S_{u_n+n,\cdots,u_i+i,\cdots,u_1 + 1, u_0}$ is $I_{u_n}=I_{l_m}$.

     On the other hand, for any element $a\in T^*Fl(l_1,l_2,\cdots,l_{m},l_m+n)$, we have $rank(\pi(a))\leq l_m$. So we know that for any \[(a,x)\in T^*Fl(l_1,l_2,\cdots,l_{m},l_m+n)\times_{\pi=B_{end}}HS(u_n+n,\cdots,u_i+i,\cdots,u_1 + 1, u_0), \]  
     we have $rank(\pi(a))=l_m$. Moreover, we have a framed basis of the $l_m$ dimensional quotient of $a$ given by the columns indexed by $J$. Using this basis, we can find the endomorphism of this $l_m$ dimensional quotient is exactly of the form $B_{end}$ attached to $S_{u_n,\cdots,u_1, u_0}$.

     This gives us a map from \[T^*Fl(l_1,l_2,\cdots,l_{m},l_m+n)\times_{\pi=B_{end}}HS(u_n+n,\cdots,u_i+i,\cdots,u_1,u_0)\] to 
     \[ T^*Fl(l_1,l_2,\cdots,l_m)\times_{\pi=B_{end}}HS(u_n,\cdots,u_1,u_0)\]

     It's straightforward to verify that this map is an isomorphism.
\end{proof}
\begin{remark}
When the variety is a pointful bow variety, there is another proof for this lemma by Hanany-Witten isomorphisms and delicate analysis of separating lines (\cite[\S~2.4 and \S~2.6]{RimanyiShou:2020}). Interested readers may find the proof by themselves as an exercise.
\end{remark}
\section{Symplectic Structure on Bow Varieties}\label{section2}
In this section, we give an embedding of a pointful bow variety into the cotangent bundle of a flag variety, using a slice of a symplectic reduction. So the embedding makes the bow variety a symplectic subvariety of $T^*Fl$.

Before the main context, we introduce an argument which will be used frequently in this section:

\begin{lemma}\label{MW}
    Suppose $X$ is a symplectic manifold with a Hamiltonian action of $G$, a moment map $\pi$, and a regular value $f\in \mathfrak{g}^*$. Consider the symplectic reduction at $f$. Suppose $Y$ is a submanifold of $X$ such that the composition of the following maps is an isomorphism:
    \[Y\hookrightarrow \pi^{-1}(f)\to \pi^{-1}(f)\slash Stab_G(f).\]
    Then $Y$ can be viewed as the symplectic reduction via the above map and the symplectic form is just the pullback of that on $X$.
\end{lemma}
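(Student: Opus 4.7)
The plan is to reduce the lemma to the defining universal property of the Marsden--Weinstein quotient. Write $\iota : \mu^{-1}(f) \hookrightarrow X$ for the inclusion of the level set, $\pi : \mu^{-1}(f) \to \mu^{-1}(f)/Stab_G(f)$ for the quotient, and $j : Y \hookrightarrow \mu^{-1}(f)$ for the embedding given by hypothesis, so that $\iota_Y := \iota \circ j$ is the inclusion $Y \hookrightarrow X$ and $\phi := \pi \circ j : Y \to \mu^{-1}(f)/Stab_G(f)$ is the stipulated isomorphism.

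First I would invoke Marsden--Weinstein: under the standing assumption that $f$ is a regular value of $\mu$ and that $Stab_G(f)$ acts freely and properly on $\mu^{-1}(f)$ (conditions built into the geometric settings in which this lemma will be applied), the quotient $\mu^{-1}(f)/Stab_G(f)$ is a smooth manifold carrying a unique symplectic form $\omega_{\mathrm{red}}$ characterized by the relation $\pi^*\omega_{\mathrm{red}} = \iota^*\omega$.

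The rest is a one-line naturality calculation. Pulling $\omega_{\mathrm{red}}$ back along $\phi$ and using $\phi = \pi \circ j$ yields
\[\phi^*\omega_{\mathrm{red}} \;=\; j^*\pi^*\omega_{\mathrm{red}} \;=\; j^*\iota^*\omega \;=\; \iota_Y^*\omega.\]
Since $\phi$ is a diffeomorphism and $\omega_{\mathrm{red}}$ is nondegenerate, $\iota_Y^*\omega$ is itself symplectic on $Y$, and $\phi$ identifies $(Y, \iota_Y^*\omega)$ with the Marsden--Weinstein reduction; this simultaneously gives both assertions in the lemma.

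I do not anticipate a serious obstacle: the statement is essentially a repackaging of the Marsden--Weinstein theorem. The substantive work lies elsewhere in the paper, in exhibiting case by case a submanifold $Y$ satisfying the slice hypothesis (exactly one representative from each $Stab_G(f)$-orbit on $\mu^{-1}(f)$), at which point this lemma can be applied as a black box to transport the symplectic structure of the reduction onto $Y$.
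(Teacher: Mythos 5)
Your proof is correct and takes essentially the same approach as the paper: both arguments unwind the Marsden--Weinstein defining relation $\pi^*\omega_{\mathrm{red}}=\iota^*\omega$ along the slice inclusion, the only difference being that you phrase it as a one-line naturality calculation on forms while the paper does the equivalent computation pointwise on tangent vectors.
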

\begin{remark}
    These $Y$ are very rare when the group $G$ is not unipotent.
\end{remark}
\begin{proof}
    This basically involves the argument in the proof of symplectic reduction (see \cite{MarsdenWeinstein:1974}).

    Consider the following diagram:
    \[\begin{tikzcd}Y\arrow[r]&\pi^{-1}(f)\arrow[r]\arrow[d]&X\\& \tilde{X}:=\pi^{-1}(f)\slash Stab_G(f) &\end{tikzcd}.\]

    Suppose that the symplectic form on $X$ is $\omega$ and the symplectic form on $\tilde{X}$ is $\tilde{\omega}$. For a point $\tilde{p}\in \tilde{X}$, we may find its lifting $p$ in $Y$ by the isomorphism. Also, the isomorphism tells us that for any $p\in Y$, we have
    \[ T_p \mu^{-1}(f)=T_p Y\oplus T_p(\mu^{-1}(f))^\perp\] where $T_p(\mu^{-1}(f))^\perp$ consists of the vectors inside $T_p(\mu^{-1}(f))$ annihilating $T_p(\mu^{-1}(f))$ via the symplectic for $\tilde{\omega}$.
    Thus, for any two vectors $\tilde{\alpha},\tilde{\beta}\in T_{\tilde{p}
    }{\tilde{X}}$, we are able to pick their lifting in $T_p Y$: $\alpha,\beta$. Then by the construction of the quotient, we know that 
    \[\omega(\alpha,\beta)=\tilde{\omega}(\tilde{\alpha},\tilde{\beta}).\]

    So, the pullback of the symplectic form on $\tilde{X}$ via the isomorphism is exactly the restriction of that on $X$.
\end{proof}

\begin{remark}
    The lemma above is stated in the category of symplectic manifolds. However, in this paper, whenever we apply this lemma, both the action and the moment map are algebraic, and the group \( \operatorname{Stab}_G(f) \) always acts freely on the set \( \pi^{-1}(f) \). Therefore, we are justified in applying it within the algebraic context in this paper.
\end{remark}

We will call such $Y$ a \defn{slice} of the symplectic reduction of $X$ by $G$ at the level $f$.

Let us begin the main context by investigating a symplectic structure on the triangle parts. 

Notice that in the statement of Lemma~\ref{slice_on_triangle}, the first affine variety is $T^*GL_{v_2}\times T^*\mathbb{C}^{v_2}$. The symplectic structure on this triangle part is just the natural one on cotangent bundles.

Now focus on the second affine variety appearing in Lemma~\ref{slice_on_triangle}:

\[GL_{v_2}\times \left\{\begin{pmatrix}0_{1\times (v-1)}&\gamma_1&b\\ I_{v-1}&\gamma_2&0_{v-1\times v_2}\\ 0_{v_1\times (v-1)}&\gamma_3&B_1\end{pmatrix}:\gamma=\begin{pmatrix}\gamma_1\\\gamma_2\\\gamma_3\end{pmatrix}\in \mathbb{C}^{v_2}, b\in (\mathbb{C}^{v_1})^*, B_1\in End_\mathbb{C}(\mathbb{C}^{v_1})\right\}\]
Using right invariant vector fields to trivialize $T_*GL_{v_2}$ and the bilinear form $\left<X,Y\right>:=tr(XY)$ to identify $\mathfrak{g}$ with $\mathfrak{g}^*$, we may view $T^*GL_{v_2}$ as $G\times \mathfrak{g}$. So the above variety is a subvariety of $T^*GL_{v_2}$. Recall that in Remark~\ref{notation_slice}, we denote the second factor by $S_{v_2,v_1}$

Denote by $\mathbb{U}_i$ be the strict upper triangular matrix group in $GL_i$ and by $\mathbb{N}_i\subset \mathfrak{gl}_i$ the Lie algebra of $\mathbb{U}_i$

Consider the following unipotent subgroup of $Gl_{v_2}$:
\[U_{v_2,v_1}:=\left\{\begin{pmatrix}M&N\\0_{v_1 \times v}&I_{v_1}\end{pmatrix}:M\in \mathbb{U}_{v_2-v_1},\ N=\begin{pmatrix}N_1\\0_{1\times v_1}\end{pmatrix},\ N_1\in Mat_{(v_2-v_1-1)\times v_1}\right\}.\]

Let $f_{v_2,v_1}:=\begin{pmatrix}0_{1\times (v-1)}&0_{1 \times 1}&0_{1\times v_1}\\ I_{v-1}&0_{(v-1)\times 1}&0_{v-1\times v_2}\\ 0_{v_1\times (v-1)}&0_{(v_1)\times 1}&0_{v_1\times v_1}\end{pmatrix}$.

\begin{theorem}\label{base}\cite[Remark~3.4]{NakajimaTakayama:2017}
    The triangle part is a symplectic reduction of $T^*GL_{v_2}$ at $f_{v_2,v_1}$ by $U_{v_2,v_1}$, and the affine variety $GL_{v_2}\times S_{v_2,v_1}$ in Lemma~\ref{slice_on_triangle} is in fact a slice of this quotient.
\end{theorem}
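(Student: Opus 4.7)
My plan is to apply Lemma~\ref{MW} to the Hamiltonian left-multiplication action of $U_{v_2,v_1}$ on $T^*GL_{v_2}$, with the submanifold $GL_{v_2}\times S_{v_2,v_1}$ as the candidate slice; the identification of the resulting reduction with the triangle part will then come from Lemma~\ref{slice_on_triangle}. Trivializing $T^*GL_{v_2}\cong GL_{v_2}\times\mathfrak{gl}_{v_2}$ by right-invariant vector fields and the trace pairing, the left action of any subgroup $H\subseteq GL_{v_2}$ becomes $h\cdot(g,X)=(hg,hXh^{-1})$ and its moment map is the restriction $(g,X)\mapsto X|_{\mathfrak{h}}$. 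Unpacking $\mathrm{tr}(XY)=\mathrm{tr}(f_{v_2,v_1}Y)$ for all $Y\in\mathfrak{u}_{v_2,v_1}$ translates the level-set condition into: the strict lower triangle of the top-left $v\times v$ block of $X$ has $1$'s on the subdiagonal and $0$'s elsewhere, and the lower-left $v_1\times(v-1)$ block of $X$ vanishes. Call this locus $\mathcal{X}$. A short commutator calculation shows that $f_{v_2,v_1}|_{\mathfrak{u}_{v_2,v_1}}$ vanishes on $[\mathfrak{u}_{v_2,v_1},\mathfrak{u}_{v_2,v_1}]$, so the coadjoint stabilizer equals $U_{v_2,v_1}$ and the reduction is the free quotient $\mathcal{X}/U_{v_2,v_1}$.

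Since $U_{v_2,v_1}$ acts freely on the $GL_{v_2}$-factor by left multiplication, proving that $GL_{v_2}\times S_{v_2,v_1}$ is a slice reduces to the following normal-form statement: for every $X\in\mathfrak{gl}_{v_2}$ satisfying the level-set conditions, there is a unique $h\in U_{v_2,v_1}$ with $hXh^{-1}\in S_{v_2,v_1}$. Equivalently, the map $U_{v_2,v_1}\times S_{v_2,v_1}\to\mathcal{X}\cap(\{e\}\times\mathfrak{gl}_{v_2})$, $(h,M)\mapsto hMh^{-1}$, is an isomorphism.

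I expect to verify this bijection by a triangular solve. Writing $h=\begin{pmatrix}M&N\\0&I_{v_1}\end{pmatrix}$ and expanding $hXh^{-1}$ in the $(1,v-1,v_1)\times(v-1,1,v_1)$ block decomposition, the conditions for membership in $S_{v_2,v_1}$ are that the first $v-1$ entries of row $1$ of $hXh^{-1}$ are $0$ and that rows $2,\dots,v$ of its upper-right $v\times v_1$ sub-block are $0$. Solved in an appropriate lexicographic order on the entries of $M$ (say along successive superdiagonals) and then on the entries of $N$, each successive equation is linear in one new coordinate of $h$ with leading coefficient drawn from the subdiagonal of $X$ --- hence equal to $1$ by the moment-map constraint --- plus terms in already-solved coordinates. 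This gives both existence and uniqueness, matching the codimension $\dim U_{v_2,v_1}$ of $S_{v_2,v_1}$ in $\mathcal{X}\cap(\{e\}\times\mathfrak{gl}_{v_2})$.

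Once this normal form is established, Lemma~\ref{MW} identifies $GL_{v_2}\times S_{v_2,v_1}$ (with the restricted symplectic form) with the symplectic reduction, and Lemma~\ref{slice_on_triangle} identifies it with the triangle part. The main obstacle is the triangular solve: writing down an explicit ordering of coordinates of $h$ under which each equation becomes triangular, and controlling the interplay between the $M$-part and the $N$-part of $h$. Once the order is fixed, the argument becomes bookkeeping driven by the subdiagonal $1$'s of $X$.
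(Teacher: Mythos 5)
Your overall route is the same as the paper's: trivialize $T^*GL_{v_2}$ by right-invariant vector fields and the trace form, compute the moment map of the left $U_{v_2,v_1}$-action as restriction of the $\mathfrak{gl}_{v_2}$-coordinate to $\mathfrak{u}_{v_2,v_1}^*$, check that all of $U_{v_2,v_1}$ stabilizes $f_{v_2,v_1}$, describe the level set explicitly, and reduce the slice claim to the normal-form statement that each $X$ in the level set is conjugated into $S_{v_2,v_1}$ by a unique $h\in U_{v_2,v_1}$. All of this matches the paper's proof, and your description of the level set is correct.

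There is, however, a concrete error in your list of the equations cutting out $S_{v_2,v_1}$ inside the level set, and it breaks the uniqueness half of the normal form. A point of the level set has the shape
\[\begin{pmatrix}a &\gamma_1&b\\ I_{v-1}+T&\gamma_2& Q\\ 0_{v_1\times (v-1)}&\gamma_3&B_1\end{pmatrix}\]
with $T\in\mathbb{N}_{v-1}$ strictly upper triangular, and membership in $S_{v_2,v_1}$ requires $a=0$, $Q=0$, \emph{and} $T=0$; you list only the first two. The omitted conditions number $(v-1)(v-2)/2$, which is exactly the discrepancy between your equation count $(v-1)+(v-1)v_1$ and $\dim U_{v_2,v_1}=v(v-1)/2+(v-1)v_1$, so for $v\ge 3$ the system you pose is underdetermined and the conjugating $h$ satisfying only your conditions is not unique. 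Once the $T=0$ conditions are restored the counts match, but the "one new coordinate per equation with leading coefficient $1$" solve still has to be organized correctly: in the paper the entire top-left $v\times v$ block equation is solved at once, yielding $M=\begin{pmatrix}1&0\\0&M_1\end{pmatrix}P^{-1}$ with $P=\begin{pmatrix}1&a\\0&I_{v-1}+T\end{pmatrix}\in\mathbb{U}_v$, which determines the columns of $M$ recursively and shows $M$ remains unipotent upper triangular; afterwards $N_1$ is obtained from the upper-right $v\times v_1$ block by the backward recursion $\vec{n}_{v-1}=\vec{q}_v$, $\vec{n}_i=\vec{n}_{i+1}B_1+\vec{q}_{i+1}$. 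Your proposal needs the corrected condition list and this (or an equivalent) explicit ordering to close the argument.
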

\begin{proof}
    Notice that we can view $\mathfrak{u}^*$ as the transpose of $\mathfrak{u}$ using the form $\left<X,Y\right>$. Then we may view $f$ as an element inside $\mathfrak{u}^*$. It can be checked directly that the whole group $U$ is the stabilizer of $f_{v_2,v_1}$ (for any $u\in \mathfrak{u}$, $[u,f_{v_2,v_1}]$ (nonzero as a matrix) will vanish on the transpose part of $\mathfrak{u}$). 

    Recall that $T^* GL_{v_2}$ is of the form $GL_{v_2} \times Mat_{v_2 \times v_2}$. We view the second part as the dual of itself. Thus there is a restriction map $res : Mat_{v_2 \times v_2} \to \mathfrak{u}^*$ which just forgets all the zero entries in the transpose of $\mathfrak{u}$. The moment map of group action is just sending $(g,x)$ to $res(x)$.
    So the level set at $f_{v_2,v_1}$ is just $GL_{v_2}$ times the set of matrices of the following form:
    \[\begin{pmatrix}a &\gamma_1&b\\ I_{v-1}+T&\gamma_2& Q\\ 0_{v_1\times (v-1)}&\gamma_3&B_1\end{pmatrix}\]where \[\gamma=\begin{pmatrix}\gamma_1\\\gamma_2\\\gamma_3\end{pmatrix}\in Mat_{v_2\times 1},\ a\in Mat_{1\times {v-1}},\ T\in \mathbb{N}_{v-1}\]\[ b\in Mat_{1\times v_1},\ Q\in Mat_{(v-1)\times v_1},\ B_1\in  Mat_{v_1\times v_1}\]

    Denote the above level set by $GL_{v_2}\times P_{v_2,v_1}$. We are going to prove that this space is isomorphic to 
    \[GL_{v_2}\times S_{v_2,v_1}\times U_{v_2,v_1}\]
    where the action by $U_{v_2,v_1}$ is trivial on the first two coordinates and is left multiplication on the third coordinate. Then we'll know that $GL_{n}\times S_{v_2,v_1}$ is a slice of this quotient.

    In fact, it suffices to show that for any element $x$ in $P_{v_2,v_1}$ there exists a unique element $u\in U_{v_2,v_1}$ such that $uxu^{-1}\in S_{v_2,v_1}$. Let us pick \[\begin{pmatrix}a &\gamma_1&b\\ I_{v-1}+T&\gamma_2& Q\\ 0_{v_1\times (v-1)}&\gamma_3&B_1\end{pmatrix}\in P_{v_2,v_1},\begin{pmatrix}M&N\\0&I_{v_1}\end{pmatrix}\in U_{v_2,v_1} .\]
    The equation we want to solve is 
    \[\begin{pmatrix}M&N\\\begin{matrix}0&0\end{matrix}&I_{v_1}\end{pmatrix}\begin{pmatrix}a &\gamma_1&b\\ I_{v-1}+T&\gamma_2& Q\\ 0_{v_1\times (v-1)}&\gamma_3&B_1\end{pmatrix}=\begin{pmatrix}0_{1\times (v-1)}&\gamma'_1&b'\\ I_{v-1}&\gamma'_2&0_{v-1\times v_1}\\ 0_{v_1\times (v-1)}&\gamma'_3&B'_1\end{pmatrix}\begin{pmatrix}M&N\\0_{v_1\times v}&I_{v_1}\end{pmatrix}\]
   where $M\in \mathbb{U}_{v_2-v_1},N=\begin{pmatrix}N_1\\0_{1\times v_1}\end{pmatrix},N_1\in Mat_{(v_2-v_1-1)\times v_1}$. 
    The goal is to show that once we fix $a,\gamma_1,\gamma_2,\gamma_3,b,Q,b_1$ and also $T$, there exists a unique pair $(M,N)$ such that the above equation holds.

    We get that $\gamma_3'=\gamma_3$ and $B_1=B'_1$ from the last $v_1$ rows. Moreover, the first $v$ rows won't involve $B'_1$ and $\gamma'_3$ any more. So it suffices to show there exists a unique pair $(M,N)$ such that the first $v$ rows of the above equation hold.

    Let us focus on the upper left $v\times v$ part. We get the following equation:
    \[M\begin{pmatrix}a&\gamma_1\\I_{v-1}+T&\gamma_2\end{pmatrix}+\begin{pmatrix}0_{v\times(v-1)}&N\gamma_3\end{pmatrix}=\begin{pmatrix}0_{1\times(v-1)}&\gamma_1'\\I_{v-1}&\gamma_2'\end{pmatrix}M\]

Suppose $\vec{e}_1$ is the first standard basis element of $\mathbb{C}^v$. Then we have $M\vec{e}_1=\vec{e}_1$ since $M\in \mathbb{U}_{v_2-v_1}$. So the above equation is equivalent to
\[ M\begin{pmatrix}1&a&\gamma_1\\0_{(v-1)\times 1}&I_{v-1}+T&\gamma_2\end{pmatrix}+\begin{pmatrix}0_{v\times v}&N\gamma_3\end{pmatrix}=\begin{pmatrix}\vec{e}_1&\begin{pmatrix}0_{1\times(v-1)}&\gamma_1'\\I_{v-1}&\gamma_2'\end{pmatrix}M\end{pmatrix}\]
Notice that the last column only tells how to obtain $\gamma'_1$ and $\gamma'_2$ from $M$, $N$, $\gamma_1$, $\gamma_2$, and $\gamma_3$. We have no restrictions on $\gamma'_1$ and $\gamma'_2$, and they do not appear in equations elsewhere. So we can just ignore this column.

Suppose that $M=\begin{pmatrix}M_1&*\\0&1\end{pmatrix}, M_1\in \mathbb{U}_{v-1}$. Focusing on the left $v$ columns of the above equation, we will get
\[M=\begin{pmatrix}1&0\\0_{1\times(v-1)}&M_1\end{pmatrix}P^{-1} \]
where $P:=\begin{pmatrix}1&a\\0_{(v-1)\times 1}&I_{v-1}+T\end{pmatrix}\in \mathbb{U}_{v}$.
So we know that the $(i+1)$-th column of $M$ is determined by $P$ and the first $i$ columns of $M$. 

Notice that the first column of $M$ must be $\vec{e}_1$. So we know that $M$ is determined by $P$, i.e. by $a$ and $T$. Moreover, if the the last $k+1$ entries of the first $i$-th column of $M$ is zero, we know that the last $k$ entries of the $i+1$-th column of $M$ is zero since $P\in \mathbb{U}_v$. So we know that $M\in \mathbb{U}_v$.

Above all, we have shown there exists a unique $M\in \mathbb{U}_v$ such the left $v\times v$ part of the original equation holds. Then it suffices to show that once we have this $M$, there exists a unique $N$ such that the upper right $v\times v_1$ part of the the original equation holds. 
 Focus on the upper right $v\times v_1$ coordinates. We get the following equation:
   \[M\begin{pmatrix}b\\Q\end{pmatrix}+\begin{pmatrix}N_1B_1\\0\end{pmatrix}=\begin{pmatrix}b'\\N_1\end{pmatrix}\]
The first row of the equation tells us how to determine $b'$ from $M$, $b$, $N_1$, and $B_1$. We have no restriction on $b'$ and it doesn't appear in the equations elsewhere. So we may ignore this row.
   
  Suppose that the $i$-th row of $M\begin{pmatrix}b\\Q\end{pmatrix}$ is $\vec{q_i}$ and the $j$-th row of $N_1$ is $\vec{n_i}$. From the last $v-1$ rows of the above equations, We get that
   \[\vec{n_{v-1}}=\vec{q_v},\vec{n_{i}}=\vec{n_{i+1}}B_1+\vec{q_{i+1}},\quad 1\leq i\leq v-1. \]
    So we know that $N_1$ (which determines $N$) is determined by $b$, $Q$, $B_1$, and $M$. And this is the solution of the upper right $v\times v_1$ part of the original equation.
    
In all, we have shown that for any 
\[ \begin{pmatrix}a &\gamma_1&b\\ I_{v-1}+T&\gamma_2& Q\\ 0_{v_1\times (v-1)}&\gamma_3&B_1\end{pmatrix}\in P_{v_2,v_1}\]
there exists a unique element $u\in U_{v_2,v_1}$  such that \[u\begin{pmatrix}a &\gamma_1&b\\ I_{v-1}+T&\gamma_2& Q\\ 0_{v_1\times (v-1)}&\gamma_3&B_1\end{pmatrix}u^{-1}\in S_{v_2,v_1}\]

\begin{remark}\label{stable}
    Notice that conjugation by $u\in U_{v_2,v_1}$ does not change  $B_1$.
\end{remark}
\end{proof}
Since $GL_{v_2}\times S_{v_2,v_1}$ is a slice of a symplectic reduction, by Lemma~\ref{MW}, when we view this slice as the reduction, the symplectic form is just the pullback from the standard one on $GL_{v_2}\times \mathfrak{gl}_{v_2}$. There is a natural action $\rho$ of $GL_{v_2}\times GL_{v_2}$ on $GL_{v_2}\times \mathfrak{gl}_{v_2}$ given by right multiplication by the inverse and left multiplication:
\[(p,q)(g,a)=(qgp^{-1},ad(q)(a)).\]The corresponding moment map (\cite[Section~2]{Bielawski:1997}) is
\[ (g,a)\to (a,ad(g)a). \]
If we view $GL_{v_2}\times GL_{v_1}$ as a subgroup of $GL_{v_2}\times GL_{v_2}$ in the following way
\[(g_2,g_1)\to (g_2,I_{v_2-v_1}\oplus g_1),\]
the natural action of $GL_{v_2}\times GL_{v_1}$ on the triangle part (viewed as $GL_{v_2}\times S_{v_2,v_1} $) is just the restriction of $\rho$. The moment map is exactly $(B_2,-B_1)$. 

In fact, we can generalize 
Theorem~\ref{base} to a class of handsaw varieties.

\begin{theorem}\label{handsawslice}
    Suppose that $u_n>u_{n-1}>\cdots>u_1 > u_0$. \begin{itemize}
        \item The definition for the handsaw variety is a symplectic reduction with respect to the group $GL_{u_{n-1}}$.
        \item The handsaw quiver variety $HS(u_n,\cdots,u_1,u_0)$ can also be viewed a symplectic reduction of $T^*GL_{u_n}$ by a unipotent group $U_{u_n,\cdots, u_1,u_0}\in GL_{u_n}$.
        \item The above two symplectic structures coming from different symplectic reductions coincide with each other.
        \item The realization we give in Theorem~\ref{construction1} is a slice of both quotients. So, we may view the symplectic form on $HS(u_n,\cdots,u_1,u_0)$ as the pullback of that on $T^*GL_{u_n}$.
        \item  The natural $GL_{v_n}$ action on the handsaw variety is just the restriction of right multiplication action on $T^*GL_{u_n}$, and the moment map attached is just $B_{end}$.
    \end{itemize} 
\end{theorem}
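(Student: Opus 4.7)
I would prove this by induction on $n$. The base case $n = 2$ is exactly Theorem~\ref{base} together with the paragraph following it, where the $GL_{v_2} \times GL_{v_1}$-action on the slice and its moment map are spelled out.

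For the inductive step, assume all five bullets hold for $HS(u_{n-1},\ldots,u_1)$. By the definition
\[
HS(u_n,\ldots,u_1) = \bigl(HS(u_n,u_{n-1}) \times_{B_{int,u_n,u_{n-1}} = B_{end,u_{n-1},\ldots,u_1}} HS(u_{n-1},\ldots,u_1)\bigr) \sslash GL_{u_{n-1}},
\]
this is manifestly a symplectic reduction of the product by the diagonal $GL_{u_{n-1}}$ (whose moment map is $B_{end,u_{n-1},\ldots,u_1} - B_{int,u_n,u_{n-1}}$, the fiber product being its zero locus), giving the first bullet. Substituting the reduction from Theorem~\ref{base} for $HS(u_n,u_{n-1})$ and the inductive one for $HS(u_{n-1},\ldots,u_1)$ rewrites the whole thing as a reduction of $T^*GL_{u_n} \times T^*GL_{u_{n-1}}$ by $U_{u_n,u_{n-1}} \times U_{u_{n-1},\ldots,u_1} \times GL_{u_{n-1}}$, with $GL_{u_{n-1}}$ acting on $T^*GL_{u_n}$ via the block embedding $g \mapsto I_{u_n - u_{n-1}} \oplus g$ and on $T^*GL_{u_{n-1}}$ by right multiplication (as guaranteed by the inductive fifth bullet). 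The inductive slice $GL_{u_{n-1}} \times X_{u_{n-1},\ldots,u_1}$ exhibits $GL_{u_{n-1}}$ as a global section for its own right action on $T^*GL_{u_{n-1}}$, so reducing by $GL_{u_{n-1}}$ absorbs that factor altogether, and what remains is a symplectic reduction of $T^*GL_{u_n}$ alone by the unipotent subgroup
\[
U_{u_n,\ldots,u_1} := U_{u_n,u_{n-1}} \cdot (I_{u_n - u_{n-1}} \oplus U_{u_{n-1},\ldots,u_1}) \subset GL_{u_n}
\]
at the level obtained by combining $f_{u_n,u_{n-1}}$ with the inductive level. This proves the second bullet.

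For the third and fourth bullets, note that the slice $GL_{u_n} \times X_{u_n,\ldots,u_1}$ constructed in Theorem~\ref{construction1} is, by its very inductive construction, the fiber product of the slice $GL_{u_n} \times S_{u_n,u_{n-1}}$ for the triangle with the inductive slice $X_{u_{n-1},\ldots,u_1}$ over $B_{int,u_n,u_{n-1}} = \tilde{f}$. Hence it is a slice for both the $GL_{u_{n-1}}$-reduction and the $U_{u_n,\ldots,u_1}$-reduction above, and Lemma~\ref{MW} identifies each symplectic form with the pullback of the canonical form on the corresponding $T^*GL$; the two pullbacks agree on the common slice, so the two forms coincide. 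For the fifth bullet, right multiplication of $GL_{u_n}$ on $T^*GL_{u_n}$ commutes with every group we reduce by (each acts by left multiplication or block conjugation in the upper $u_n - u_{n-1}$ rows), so it descends to $HS(u_n,\ldots,u_1)$; its moment map is the $\mathfrak{gl}_{u_n}^*$-component of the cotangent coordinate, which on the slice is exactly $B_{end,u_n,u_{n-1}}$ by Remark~\ref{action_on_triangle}.

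The main obstacle is the absorption step: verifying that $U_{u_n,u_{n-1}} \cdot (I_{u_n-u_{n-1}} \oplus U_{u_{n-1},\ldots,u_1})$ really is a unipotent subgroup of $GL_{u_n}$ (which follows from a short block-matrix check that the block-embedded second factor normalizes $U_{u_n,u_{n-1}}$), and that the simultaneous reduction by $U_{u_n,u_{n-1}} \times U_{u_{n-1},\ldots,u_1} \times GL_{u_{n-1}}$ on $T^*GL_{u_n} \times T^*GL_{u_{n-1}}$ truly collapses to a single reduction of $T^*GL_{u_n}$ by $U_{u_n,\ldots,u_1}$ at a single level $f_{u_n,\ldots,u_1}$. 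Once this identification is pinned down, everything else flows from Theorem~\ref{construction1} and Lemma~\ref{MW}.
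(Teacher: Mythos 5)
Your overall architecture matches the paper's: induction on $n$, with the base case $n=2$ given by Theorem~\ref{base}, the first bullet read off from the defining fiber product, and Lemma~\ref{MW} invoked at the end. But there is a genuine gap at exactly the point you flag as ``the main obstacle'': you assert that the fiber-product structure of the slice from Theorem~\ref{construction1} makes it ``a slice for both the $GL_{u_{n-1}}$-reduction and the $U_{u_n,\ldots,u_1}$-reduction,'' and that the staged reduction ``collapses'' to a single reduction of $T^*GL_{u_n}$ by $U_{u_n,\ldots,u_1}$ at a single level $f_{u_n,\ldots,u_1}$ --- but you never verify this, and it is not automatic. The slice property for the combined group is the statement that for every $K$ in the combined level set $P_{u_n,\ldots,u_1}=\bigcap_{2\le i\le n}res_{u_n,u_i}^{-1}P_{u_i,u_{i-1}}$ there exists a \emph{unique} $A\in U_{u_n,\ldots,u_1}$ with $AKA^{-1}\in S_{u_n,\ldots,u_1}$; knowing that $GL_{u_n}\times S_{u_n,u_{n-1}}$ is a slice for $U_{u_n,u_{n-1}}$ and that (inductively) $S_{u_{n-1},\ldots,u_1}$ is a slice for $U_{u_{n-1},\ldots,u_1}$ does not by itself imply that their intersection is a slice for the product group, because conjugating by one factor could a priori destroy membership in the other factor's slice.

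The paper closes this gap with a concrete two-step argument that you would need to supply. For existence: first conjugate by $M_1\in U_{u_{n-1},\ldots,u_1}$ (acting through the block embedding) to land in $S_{u_{n-1},\ldots,u_1}$, then by $M_2\in U_{u_n,u_{n-1}}$ to land in $S_{u_n,u_{n-1}}$; the crucial input is Remark~\ref{stable}, which says conjugation by $U_{u_n,u_{n-1}}$ does not disturb the lower-right $u_{n-1}\times u_{n-1}$ block, so the second conjugation does not undo the first and $M_2M_1KM_1^{-1}M_2^{-1}$ lies in the intersection $S_{u_n,u_{n-1}}\cap res_{u_n,u_{n-1}}^{-1}S_{u_{n-1},\ldots,u_1}=S_{u_n,\ldots,u_1}$. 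For uniqueness: factor an arbitrary $M\in U_{u_n,\ldots,u_1}$ uniquely as $M_1M_2$ with $M_1\in U_{u_n,u_{n-1}}$, $M_2\in U_{u_{n-1},\ldots,u_1}$, apply Remark~\ref{stable} again to see that $M_2$ is pinned down by the inductive uniqueness, and then $M_1$ by the uniqueness in Theorem~\ref{base}. Your parenthetical that the block-embedded factor ``normalizes'' $U_{u_n,u_{n-1}}$ is part of why the product is a group, but the normalization alone does not give the slice property; the non-interference of the two conjugations (Remark~\ref{stable}) is the essential ingredient, and without it your proof does not go through.
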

\begin{remark}\label{importantslice}
    Let us view elements in $GL_{u_i}$ (resp. $\mathfrak{gl}_{u_i}$) as elements in $GL_{u_n}$ (resp. $ \mathfrak{gl}_{u_n}$) by the diagonal embedding $I_{u_n-u_i}\oplus GL_{u_i}$ (resp. $0\oplus \mathfrak{gl}_{u_i}$). Then we may take 
    \[f_{u_n,\cdots,u_1,u_0}:=\underset{0\leq i\leq n-1}{\sum}f_{u_{i+1},u_i};\]
    \[U_{u_n,\cdots,u_1,u_0}:=\underset{0\leq i\leq n-1}{\prod}U_{u_{i+1},u_i}.\]
    The latter is well-defined since for $i>j$, $U_{u_{i+1},u_i}$ centralizes $U_{u_{j+1},u_j}$. 
    The symplectic reduction we want to study is just the symplectic reduction by $U_{u_n,\cdots,u_1,u_0}$ at the level $f_{u_n,\cdots,u_1,u_0}$.
  
    There are also maps $res_{u_n,u_i}$ from $\mathfrak{gl}_{u_n}$ to $\mathfrak{gl}_{u_i}$ just restricting to those coordinates in the image of the above embedding. When $n=i$, we take $res_{u_n,u_i}$ to be the identity.

    Then the slice (i.e. the $X$ from Theorem~\ref{construction1}) which we care about is 
    \[S_{u_n,\cdots, u_1,u_0}:=\underset{1\leq i\leq n }{\bigcap}res_{u_n,u_i}^{-1}S_{u_{i},u_{i-1}}.\]
\end{remark}
\begin{proof}[Proof of Theorem~\ref{handsawslice}]

    We prove the above theorem by induction on $n$. 

    Notice that when $n=1$, the statement just follows from the foregoing theorem and the argument after it.

    Suppose we have proved all of these for $HS(u_{n-1},\cdots,u_1,u_0)$. By the last statement, we can see directly $HS(u_{n},\cdots,u_1,u_0)$ is the symplectic reduction of $HS(u_{n},u_{n-1})\times HS(u_{n-1},\cdots,u_1,u_0)$ by the group $GL_{u_{n-1}}$. By the proof of Theorem~\ref{construction1}, we know that the realization we give in Theorem~\ref{construction1} is a slice of this quotient. So it is equivalent to show the following 
    \begin{itemize}
        \item $GL_{u_n}\times S_{u_n,\cdots, u_1,u_0}$ is a slice of the symplectic reduction of $T^*(GL_{u_n})$ at $f_{u_n,\cdots,u_1,u_0}$ by $U_{u_n,\cdots, u_1,u_0}$.
    \end{itemize}
    Take $P_{u_n,\cdots,u_1,u_0}$ as follows:
    \[
    \underset{1\leq i\leq n}{\bigcap } Res_{u_n,u_i}^{-1} P_{u_i,u_{i-1}}
    \]
    Actually, what we want to prove is that for every $K\in P_{u_n,\cdots,u_1,u_0}$, there exists a unique $A\in U_{u_n,\cdots, u_1,u_0}$ such that $AKA^{-1}\in S_{u_n,\cdots, u_1,u_0}$.

    Let us fix $K$ now. 
    
    Notice that $res_{n,n-1}(K)\in P_{u_{n-1},\cdots,u_1,u_0}$. By induction, there exists $M_1\in U_{u_{n-1},\cdots, u_1,u_0}$ such that $M_1KM_1^{-1}\in S_{u_{n-1},\cdots, u_1,u_0}$. View $M_1KM_1^{-1}$ as an element in $\mathfrak{gl}_{u_{n}}$. Notice that it's inside $P_{u_n,u_{n-1}}$. 
    By Theorem~\ref{base}, we know there exists $M_2\in U_{u_n,u_{n-1}}$ such that $M_2(M_1KM_1^{-1})M_2^{-1}\in S_{u_n,u_{n-1}} $.
    From Remark~\ref{stable}, we have  \[res_{u_n,u_{n-1}}(M_2(M_1KM_1^{-1})M_2^{-1})=M_1KM_1^{-1}.\] So we know that \[M_2(M_1KM_1^{-1})M_2^{-1}\ \in S_{u_n,u_{n-1}}\cap res_{u_n,u_{n-1}}^{-1}S_{u_{n-1},\cdots,u_1,u_0}\ =S_{u_n,\cdots,u_1,u_0}\]
    Hence the existence of $A=M_2M_1$ is proved.

    Suppose we have an element $M\in U_{u_n,\cdots ,u_1}$ such that $MKM^{-1}\in S_{u_n,\cdots, u_1,u_0}$. By the definition, we may factor $M$ as $M_1M_2$ in a unique way where $M_1\in U_{u_n,u_{n-1}}$ and $M_2\in U_{u_{n-1},\cdots ,u_1,u_0}$. By Remark~\ref{stable}, we know that 
    \[M_2KM_2^{-1}=res_{u_n,u_{n-1}} (M_1M_2KM_2^{-1}M_1^{-1})\in P_{u_{n-1},\cdots,u_1,u_0} \]
    By induction, $M_2$ is unique. Then, by the uniqueness in the proof of Theorem~\ref{base}, we know $M_1$ is also unique. Hence the uniqueness of $M$ is proven.
\end{proof}
\begin{corollary}
Suppose we are given a bow variety of the form:
\[M(\vec{v},\vec{w})\times _{\pi=B_{end}}HS(u_n,\cdots,u_1,u_0)\sslash GL_{u_n},\ \vec{w}=(w_1,\cdots, w_m), w_m=u_n>\cdots >u_1 > u_0.\]
The variety above, $M(\vec{v},\vec{w})$, is the type $A$ Nakajima quiver variety (stability parameter as $(-1,\cdots,-1)$ with a gauged dimension vector $\vec{v}$ and framed dimension vector $\vec{w}$). The map $\pi$ is just the moment map of $GL_{w_m}=GL_{l_n}$ action on the quiver variety. Then the bow variety can be viewed as
\begin{itemize}
    \item a symplectic reduction of the quiver variety by $U_{u_n,\cdots ,u_1,u_0}$ where $U_{u_n,\cdots ,u_1,u_0}$ acts on framing;
    \item a symplectic subvariety $\pi^{-1}(S_{u_n,\cdots,u_1,u_0})\subset M(\vec{v},\vec{w})$.
\end{itemize}
\end{corollary}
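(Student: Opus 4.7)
The strategy is to apply reduction in stages, leveraging Theorem~\ref{handsawslice}, which realizes $HS(u_n,\cdots,u_1)$ itself as the symplectic reduction of $T^*GL_{u_n}$ by $U := U_{u_n,\cdots,u_1}$ at $f := f_{u_n,\cdots,u_1}$, with $U$ acting by left multiplication and the residual Hamiltonian right $GL_{u_n}$-action carrying moment map $B_{end}$.

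First I would observe that the bow variety is the symplectic reduction of $M(\vec{v},\vec{w}) \times HS(u_n,\cdots,u_1)$ by the diagonal $GL_{u_n}$-action (the fiber-product condition $\mu = B_{end}$ is just the vanishing of the resulting moment map). Substituting $HS \cong T^*GL_{u_n} \sslash_f U$ and using that the $U$-action (left multiplication) commutes with the diagonal $GL_{u_n}$-action on $M(\vec{v},\vec{w}) \times T^*GL_{u_n}$, reduction in stages gives
\[
\mathrm{Bow} \;\cong\; \bigl((M(\vec{v},\vec{w}) \times T^*GL_{u_n}) \sslash_0 GL_{u_n}\bigr) \sslash_f U.
\]
The inner reduction is the standard identification $(X \times T^*G) \sslash_0 G \cong X$ of Hamiltonian $G$-spaces: a zero-level representative of each class has a unique form $(x, e, \mu_X(x))$, and applying $u \in U$ on $T^*G$ followed by a right-$G$ translation to restore $g = e$ returns $(ux, e, \mu_X(ux))$ by $G$-equivariance of $\mu_X$. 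Hence the residual $U$-action is the natural one on $M(\vec{v},\vec{w})$, and we conclude $\mathrm{Bow} \cong M(\vec{v},\vec{w}) \sslash_f U_{u_n,\cdots,u_1}$, giving the first bullet.

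For the second bullet, I would combine this with Theorem~\ref{construction1}: $HS(u_n,\cdots,u_1) \cong GL_{u_n} \times S_{u_n,\cdots,u_1}$ with $B_{end}(g,x) = g\,\tilde{B}_{end}(x)\,g^{-1}$, and under the standing hypothesis $u_n > \cdots > u_1$ the map $\tilde{B}_{end}$ is the inclusion $S_{u_n,\cdots,u_1} \hookrightarrow \mathfrak{gl}_{u_n}$. Since $GL_{u_n}$ acts freely on its own factor of $HS$, taking $g = I$ provides a global section of the fiber-product quotient, identifying the bow variety with $\mu^{-1}(S_{u_n,\cdots,u_1})$ in exactly the same way as in the preceding good-bow-variety corollary. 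Lemma~\ref{MW}, applied to $\mu^{-1}(S_{u_n,\cdots,u_1})$ as a slice of the $U$-reduction of $M(\vec{v},\vec{w})$, then shows that it inherits the restriction of the symplectic form on $M(\vec{v},\vec{w})$, so it is a symplectic subvariety.

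The main obstacle is the bookkeeping in reduction in stages: one must carefully verify that the $U$- and diagonal $GL_{u_n}$-actions on $M(\vec{v},\vec{w}) \times T^*GL_{u_n}$ commute (with consistent sign conventions for left vs.\ right multiplication and for the trace-form identification $\mathfrak{g} \cong \mathfrak{g}^*$ used throughout this section), and check that under $(X \times T^*G)\sslash G \cong X$ the residual $U$-action corresponds to the natural action inherited from the quiver variety's $GL_{u_n}$-structure. Regularity of $f$ and freeness of $U$ on the relevant level set are inherited from the slice description of Theorem~\ref{handsawslice}. Once these identifications are in place, the second bullet is a direct reprise of the earlier corollary, now with $T^*Fl$ replaced by a general Nakajima quiver variety.
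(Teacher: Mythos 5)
Your argument is correct and, at bottom, rests on the same two facts the paper uses: Theorem~\ref{handsawslice} (the slice description of $HS(u_n,\ldots,u_1)$ as $GL_{u_n}\times S_{u_n,\ldots,u_1}$ together with the fact that $GL_{u_n}\times S_{u_n,\ldots,u_1}$ is a slice of $T^*GL_{u_n}\sslash_f U$) and Lemma~\ref{MW}. The difference is one of packaging. You derive the first bullet abstractly, via reduction in stages for the commuting left-$U$ and right-$GL_{u_n}$ actions on $T^*GL_{u_n}$ together with $(X\times T^*G)\sslash_0 G\cong X$, and then obtain the second bullet separately from the slice description. The paper's proof proves both bullets at once: it identifies $\mathrm{Bow}\cong\mu^{-1}(S_{u_n,\ldots,u_1})$ via the $GL_{u_n}$-quotient, and then shows in one step that $\mu^{-1}(S_{u_n,\ldots,u_1})$ is a slice of $M(\vec v,\vec w)\sslash_f U_{u_n,\ldots,u_1}$ by transferring the slice property from $T^*GL_{u_n}$ to $M(\vec v,\vec w)$: since the level set is $\mu^{-1}(p_{\mathfrak{u}}^{-1}(f))$, and for each $y\in p_{\mathfrak{u}}^{-1}(f)$ there is a unique $u\in U$ with $\mathrm{ad}(u)y\in S_{u_n,\ldots,u_1}$, equivariance of $\mu$ gives the unique $u$ moving a level-set point into $\mu^{-1}(S_{u_n,\ldots,u_1})$, and Lemma~\ref{MW} then yields both the reduction statement and the symplectic-subvariety statement simultaneously. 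Your route is conceptually cleaner for the first bullet but does leave open exactly the bookkeeping you flag about identifying the residual $U$-action on $M(\vec v,\vec w)$ after the inner reduction, whereas the paper sidesteps stages entirely by verifying the slice condition once. Both are valid; I would just note that the slice verification you defer to in your second paragraph is not a mere ``reprise'' but is in fact the whole content of the paper's argument, so you could equally well have led with it and gotten both bullets for free from Lemma~\ref{MW}.
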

\begin{proof}
    By the foregoing theorem, we are running into the following case:
    \begin{itemize}
        \item $X$ (viewed as $M(\vec{v},\vec{w})$) is a symplectic variety with a Hamiltonian action by $GL_{u_n}$ and a moment map $\pi_1:X\to \mathfrak{gl}_{u_n}$ (view $\mathfrak{gl}_{u_n}$ as $\mathfrak{gl}_{u_n}^*$ via $tr(xy)$).
        \item $GL_{u_n}\times S$ is a symplectic subvariety of $T^*GL_{u_n}=GL_{u_n}\times \mathfrak{gl}_{u_n}$(viewed as $\mathfrak{gl}_{u_n}^*$ by the bilinear form $tr(xy)$) equipped with a $GL_{u_n}$ action as $h(g,s)=(gh^{-1},s)$. This tells us the the moment map $\pi_2$ is just the projection to $S$.
        \item There exists a subgroup $U$ of $\mathbb{U}_{u_n}$ (defined by vanishing of some matrix entries)  and an element $f\in\mathfrak{u}^T$ (idenitified with $\mathfrak{u}^*$ via $tr(xy)$). Then there is a natural projection $p_\mathfrak{u}$ from $\mathfrak{gl}_{u_n}$ to $\mathfrak{u}^T$ by restricting those coordinates and the moment map of $U$ is just $p_\mathfrak{u}\circ \pi_2$. The subvariety $S$ satisfies that for any $y\in p_\mathfrak{u}^{-1}(f)$ there exists a unique $u\in U$ such that $ad(u)y\in S$. 
    \end{itemize}
    
    Due to the first two bullet points, $X\times_{\pi_1(x)=s} S$ is a slice of the symplectic reduction $(X\times_{\pi_1(x)=s} (GL_n\times S))\sslash GL_n$. So we can view the symplectic reduction as $\pi^{-1}(S)$ and the symplectic structure is just the pullback of the symplectic form on $X$.
    
    What we want to prove can be reduced to: 
    $\pi_1^{-1}(S)$ is a slice of the symplectic reduction of $X$ by $U$ at the level $f$.

    Notice that the moment map of $U$ on $X$ is just $p_\mathfrak{u}\circ \pi_1$. Then the level set at $f$ is just $\pi_1^{-1}(p_\mathfrak{u}^{-1}(f) )$. The slice argument is equivalent to saying that for any $(x,\pi_1(x)),x\in X,\pi_1(x)\in p_\mathfrak{u}^{-1}(f)$ there exists a unique $u\in U$ such that $(ux, ad(u)\mu_1(x))\in \pi_1^{-1}(S)$. This comes from the third argument directly.
\end{proof}
Combining the above corollary and the argument in the end of \S\ref{section1}, we reach one of the main results:

\begin{theorem}\label{pointful} (This is a generalized statement of Theorem~\ref{mainthm})
    Every good bow variety is a symplectic reduction of the cotangent bundle of a flag variety by a unipotent group. Moreover, there is a slice of this quotient. So every good bow variety can also be viewed as a symplectic subvariety of the cotangent bundle of flag variety.
\end{theorem}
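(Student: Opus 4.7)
The plan is to assemble the statement from pieces already in hand, namely the preceding corollary and Lemma~\ref{add_1}. By the definition of a good bow variety, $M(\vec v,\vec w)$ is already $T^*Fl(l_1,\ldots,l_m)$ with $l_m=u_n$, so the ambient Nakajima quiver variety is a cotangent bundle of a partial flag variety. The preceding corollary already contains both conclusions one wants --- realization as a symplectic reduction of $T^*Fl$ by the unipotent group $U_{u_n,\ldots,u_1}$ at the level $f_{u_n,\ldots,u_1}$, and a slice given by $\mu^{-1}(S_{u_n,\ldots,u_1})$ --- but only under the hypothesis $u_n>u_{n-1}>\cdots>u_1$, whereas the definition of ``good'' supplies only weak inequalities.

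To bridge this gap I would apply Lemma~\ref{add_1} to pass to an isomorphic good bow variety whose handsaw tuple is strictly decreasing. The lemma shifts $(u_n,\ldots,u_1)$ to $(u_n+n-1,\,u_{n-1}+n-2,\,\ldots,\,u_1)$ while appending one new term $l_m+n-1$ at the top of the flag sequence. From $u_j\geq u_{j-1}$ one immediately gets $u_j+(j-1)>u_{j-1}+(j-2)$, so the shifted tuple is strictly decreasing; moreover the new datum still satisfies the defining conditions of a good bow variety, since the appended flag index $l_m+n-1$ matches the new top of the handsaw.

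Applying the preceding corollary to the shifted good bow variety then exhibits it --- and hence, via the isomorphism of Lemma~\ref{add_1}, the original one --- as the symplectic reduction of $T^*Fl(l_1,\ldots,l_m,l_m+n-1)$ by the unipotent group $U_{u_n+n-1,\ldots,u_1}$ at the level $f_{u_n+n-1,\ldots,u_1}$, with slice $\mu^{-1}(S_{u_n+n-1,\ldots,u_1})$ inside this cotangent bundle. This simultaneously delivers the unipotent symplectic reduction description and the realization as a symplectic subvariety of a cotangent bundle of a flag variety, which are precisely the two assertions of the theorem.

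The proof is essentially bookkeeping, so there is no real obstacle; all substantive work lives in Theorem~\ref{base}, Theorem~\ref{handsawslice}, and the preceding corollary. The only points worth double-checking are that a single application of Lemma~\ref{add_1} suffices to upgrade weak inequalities to strict ones (which follows from the specific shift $u_j\mapsto u_j+(j-1)$) and that its output still satisfies the good-ness conditions --- both immediate from the statement of the lemma.
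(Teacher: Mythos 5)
Your proposal is correct and follows the same route as the paper: the paper likewise obtains Theorem~\ref{pointful} by combining the corollary to Theorem~\ref{handsawslice} with the reduction to strictly decreasing $u_i$ via Lemma~\ref{add_1} (the ``argument at the end of \S~\ref{section1}''). Your extra check that a single application of the shift $u_j\mapsto u_j+(j-1)$ upgrades weak to strict inequalities while preserving goodness is exactly the bookkeeping the paper leaves implicit.
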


Here is the explicit description of good bow varieties (including bow varieties with torus fixed points) in the above symplectic reduction language. 

Suppose that we are given two vectors of natural numbers $(\lambda_1,\cdots,\lambda_m)$ and $(\mu_1,\cdots,\mu_n)$. Assume that $\underset{1\leq i\leq m}{\sum}\lambda_m=\underset{1\leq i\leq n}{\sum}\mu_n$ and we will denote the sum by $N$. Then we can construct one nondecreasing sequence
$(l_1,\cdots, l_m)$ and a nonincreasing sequence $(u_n,\cdots, u_1,0)$ where $l_i=\underset{k\leq i}{\sum}\lambda_k$ and $u_i=\underset{k>n-i}{\sum}{\mu_k}$. 

Recall that every good bow variety is determined by such two sequences. By Lemma~\ref{add_1}, we may always ask that $(u_n,\cdots, u_1,0)$ is strictly decreasing, i.e. $\mu_i\geq 1, 1\leq i\leq n$.

Notice that the vector $(l_1,\cdots, l_m)$ determines a flag variety, which we will denote by $T^*(Fl_{\vec{\lambda}})$.

\begin{definition}
     The unipotent group $U_{\vec{\mu}}\subset GL_N$ is defined by the following equations
 \vspace{-0.4em}
\begin{itemize}
\item To describe equations in a better way, we will divide $N\times N$ in to $n\times n$ blocks where $(i,j)$-block is a $\mu_i\times \mu_j$ matrix.
\item When $i>j$, we require that the whole $(i,j)$-block is a zero matrix.
\item When $i=j$, we require that the $(i,i)$-block is an upper-triangular matrix with $1$ on the diagonal.
\item When $i<j$, we require that the last row of the $(i,j)$-block is zero
\end{itemize}
\end{definition}
\begin{example}
    This is exactly the group $U_{(u_n,\cdots, u_1,0)}$
\end{example}
\begin{example}
    When $\vec{\mu}=(2,4)$
    \[U_{\vec{\mu}}=\left\{\begin{pmatrix}1&*&\vline&*&*&*&*\\0&1&\vline&0&0&0&0\\
\hline
0&0&\vline&1&*&*&*\\0&0&\vline&0&1&*&*\\0&0&\vline&0&0&1&*\\0&0&\vline&0&0&0&1\end{pmatrix}:*\in\mathbb{C}\right\}\]
\end{example}
\begin{definition}\label{generalslice}
    By the trace bilinear form $\left<x,y\right>:=tr(xy)$, we may view $\mathfrak{gl}_N$ as $\mathfrak{gl}_N^*$. From this point of view, we may view $\mathfrak{u}_{\vec{\mu}}^*$ as the transpose of $\mathfrak{u}_{\vec{\mu}}$. The map \[\iota^*:\mathfrak{gl}_N\cong \mathfrak{gl}_N^*\to \mathfrak{u}_{\vec{\mu}}^*\cong \mathfrak{u}_{\vec{\mu}}^t\] is just forgetting all other coordinates not appearing in $\mathfrak{u}_{\vec{\mu}}^t$.

Take $e$ to be the matrix where subdiagonals are $1$ and  all other coordinates are $0$. Then $f_\mu$ is defined to be $\iota^*(e)$.
\end{definition}
\begin{remark}
    This is exactly the element $f_{(u_n,\cdots, u_1,0)}$
\end{remark}
\begin{example}
When $\vec{\mu}=(2,4)$,
\[
\mathfrak{u}_{\vec{\mu}}^*=\left\{\begin{pmatrix}&&&&&\\ *&&&&&\\ *&&&&&\\ *&&*&&&\\ *&&*&*&&\\ *&&*&*&*&\end{pmatrix}:*\in \mathbb{C} \right\}
, f_{\vec{\mu}}=\begin{pmatrix}&&&&&\\1&&&&&\\0&&&&&\\0&&1&&&\\0&&0&1&&\\0&&0&0&1&\end{pmatrix}\]

\end{example}
\begin{definition}
    $S_{\vec{\mu}}$ is a subspace of $N\times N$ matrices which can be described blockwisely. We divide the $N\times N$ matrix into $n\times n$ blocks corresponding to the partition $\vec{\mu}$. When $i>j$, we require all the columns except the last one of the $(i,j)$ block to be zero. When $i<j$, we require all the rows except the first one of the $(i,j)$ block  to be zero. We require the $(i,i)$ block to be a companion matrix.
\end{definition}
\begin{example}
    When $\vec{\mu}=(2,4)$, $S_{\vec{\mu}}$ is as follows
    \[\left\{\begin{bmatrix}0&*&\vline &*&*&*&*\\1&*&\vline&0&0&0&0\\
    \hline 0&*&\vline&0&0&0&*\\0&*&\vline&1&0&0&*\\0&*&\vline&0&1&0&*\\0&*&\vline&0&0&1&*\end{bmatrix}:*\in \mathbb{C}\right\}\]
\end{example}

Then the good bow variety labeled by $(l_1,\cdots, l_m)$ and $(u_n,\cdots ,u_1,0)$ (i.e. the bow variety in \cite{RimanyiShou:2020} with row sums $(\lambda_1,\cdots, \lambda_m)$ and columns sums $(\mu_1,\cdots,\mu_n)$) is $T^*(Fl_{\vec{\lambda}})\sslash_{f_{\vec{\mu}}} U_{\vec{\mu}}$. And the slice of this reduction is $\pi_{\vec{\lambda}}^{-1}( S_{\vec{\mu}})$ where $\pi_{\vec{\lambda}}$ is the moment map of $GL_N$ action on $T^*(Fl_{\vec{\lambda}})$. And the affinization of this bow variety is just $\pi_{\vec{\lambda}}(S_{\vec{\mu}})$.

\section{Torus Action on Bow Varieties}\label{torusactiondef}
We also have a nice description of the torus acting on this bow variety in the language of symplectic reduction.

 There is a natural action of $\mathbb{T}\times h\mathbb{C}^*$ on $T^*(Fl_{\vec{\lambda}})$ where $\mathbb{T}$ is the diagonal maximal torus and $h\mathbb{C}^*$ is the dilation action on cotangent fibres. Notice that $\mathbb{T}\times h\mathbb{C}^*$ normalizes $U_{\vec{\mu}}$ (both viewed as subgroups of $Aut(T^*(Fl_{\vec{\lambda}}))$. 
  
 We have a $\mathbb{T}\times h\mathbb{C}^\times$-action on $\mathfrak{u}_{\vec{\mu}}^*$ where $T$ acts by conjugation and $h\mathbb{C}^\times$ acts by scaling (this action does not come from the previous one). This action makes the moment map equivariant.

 Now we have a $Z_{f_{\vec{\mu}}}(\mathbb{T}\times h\mathbb{C}^*)$ action on $T^*(Fl_{\vec{\lambda}})\sslash_{f_{\vec{\mu}}} U_{\vec{\mu}}$ since  $Z_{f_{\vec{\mu}}}(\mathbb{T}\times h\mathbb{C}^*)$ normalizes (resp. centralizes) $U_{\vec{\mu}}$ (resp. $f_{\vec{\mu}}$). We will denote this torus by $\mathbb{T}_{\vec{\lambda},\vec{\mu}}$. This is exactly the torus acting on bow varieties. 
 
 We can decompose $\mathbb{T}_{\vec{\lambda},\vec{\mu}}$ into two parts.

 Denote $\mathbb{T}_{\lambda,\mu,sym}$ to be $Z_{f_{\vec{\mu}}}(\mathbb{T})$, which is 
 \[\left\{z_1 I_{\mu_1}\oplus \cdots \oplus z_n I_{\mu_n}: z_i \in \mathbb{C}^\times \right\}\]
 This is the subtorus of $\mathbb{T}_{\vec{\lambda},\vec{\mu}}$ which acts on bow varieteis Hamiltonianly.

 Consider the cocharacter of $\mathbb{T}\times h\mathbb{C}^*$ given by $w_{\mu_1}\oplus \cdots \oplus w_{\mu_n} \oplus -2$ where $w_{k}$ is the vector $(0,1,\cdots, k-1)$. It's actually a cocharacter of $T_{\mu_i}$. Let us denote the corresponding $\mathbb{C}^\times$  to be $\hat{h}\mathbb{C}^\times$.

 Then we can decompose $\mathbb{T}_{\vec{\lambda},\vec{\mu}}$ as $\mathbb{T}_{\vec{\lambda},\vec{\mu},sym}\times \hat{h}\mathbb{C}^\times$. 

\begin{remark}
    Stable envelopes are important concepts introduced by in \cite{MaulikOkounkov:2019}. We may decompose $U_{\vec{\mu}}$ as $U_1\ltimes U_2$ where $U_1$ is the blockwise diagonal part and $U_2$ is the blockwise strictly uppertriangular part. Suppose the pullback of $f_{\vec{\mu}}$ to $\mathfrak{u}_1^*$ is $f_1$. Then the symplectic reduction $\left(T^*(Fl_{\vec{\lambda}})\right)^{\mathbb{T}_{sym}} \sslash_{f_1} \mathbb{U}_1$ also has a slice which is exactly $Bow^{\mathbb{T}_{\vec{\lambda},\vec{\mu}}}=Bow^{\mathbb{T}_{sym}}$.

    Notice that symplectic reductions give us correpondences which induce the following maps
    \[ \mathcal{L}_U: H_{\mathbb{T}_{\vec{\lambda},\vec{\mu}}}^*(Bow)\to H_{\mathbb{T}_{\vec{\lambda},\vec{\mu}}}^*(T^*Fl_{\vec{\lambda}}),\quad \mathcal{L}_{U_1}:H_{\mathbb{T}_{\vec{\lambda},\vec{\mu}}}^*(Bow^{\mathbb{T}_{\vec{\lambda},\vec{\mu},sym}})\to H_{\mathbb{T}_{\vec{\lambda},\vec{\mu}}}^*\left((T^*Fl_{\vec{\lambda}})^{\mathbb{T}_{\vec{\lambda},\vec{\mu},sym}}\right). \]

    Then we have the following commutative diagram
    \begin{center}
        \begin{tikzcd}
        H_{\mathbb{T}_{\vec{\lambda},\vec{\mu}}}^*(T^*(Fl_{\vec{\lambda}})) &&H_{\mathbb{T}_{\vec{\lambda},\vec{\mu}}}^*\left(T^*(Fl_{\vec{\lambda}})^{\mathbb{T}_{sym}}\right)\arrow[ll,"Stab_{\mathbb{T}_{sym},T^*Fl_{\vec{\lambda}}}"]\\H_{\mathbb{T}_{\vec{\lambda},\vec{\mu}}}^*(Bow)\arrow[u,"\mathcal{L}_{U}"]&&H_{\mathbb{T}_{\vec{\lambda},\vec{\mu}}}^*(Bow^{\mathbb{T}_{sym}})\arrow[ll,"Stab_{\mathbb{T}_{sym},Bow}"] \arrow[u,"\mathcal{L}_{U_{1}}"]
    \end{tikzcd}
    \end{center}
    However, this diagram is not interesting because the map $\mathcal{L}_{U_1}$ is $0$ since  \[T_z\left(T^*(Fl_{\vec{\lambda}})^{\mathbb{T}_{sym}} \right)/ T_z \left( Level(f_1)\right)\] has nontrivial $\mathbb{T}_{\vec{\lambda},\vec{\mu}}$ invariant part for any $z\in Bow^{\mathbb{T}_{sym}}$. On the other hand, if we allow improper pushforward to make a map in the opposite direction for symplectic reductions in the above diagram, we are able to fix the previous problem because both the lie algebra of $U$ and $U_1$ do not have zero weights under the $\mathbb{T}_{\vec{\lambda},\vec{\mu}}$ action. So we can divide the product of the weights. This leads us to consider the following diagram of Lagrangian correspondences
    \begin{center}
        \begin{tikzcd}
        H_{\mathbb{T}_{\vec{\lambda},\vec{\mu}}}^*(T^*(Fl_{\vec{\lambda}})) \arrow[d,"\mathcal{L}_{U}"]&&H_{\mathbb{T}_{\vec{\lambda},\vec{\mu}}}^*\left(T^*(Fl_{\vec{\lambda}})^{\mathbb{T}_{sym}}\right)\arrow[ll,"Stab_{\mathbb{T}_{sym},T^*Fl_{\vec{\lambda}}}"]\arrow[d,"\mathcal{L}_{U_{1}}"]\\H_{\mathbb{T}_{\vec{\lambda},\vec{\mu}}}^*(Bow)&&H_{\mathbb{T}_{\vec{\lambda},\vec{\mu}}}^*(Bow^{\mathbb{T}_{sym}})\arrow[ll,"Stab_{\mathbb{T}_{sym},Bow}"] 
    \end{tikzcd}
    \end{center}
This diagram is commutative and actually gives a geometric explanation of the $D5$ resolution formula in \cite{BottaRimanyi:2023}.
\end{remark}
\section{Equivariant cohomology of \texorpdfstring{$2$}{2}-row bow varieties }\label{section4}
\subsection{Strong deformation retraction}
In this section, we focus on the bow varieties whose torus fixed points are represented by $2$-row matrices. We will call them \defn{$2$-row bow varieties}. According to \cite{RimanyiShou:2020}, these bow varieties are of the following form:
\[T^*Gr(k;u_n)\times_{\pi=B_{end}}HS(u_n,\cdots,u_1,0)\]
where
\begin{itemize}
     \item A sequence $c_i,1\leq i\leq n$ satisfies
\[u_i=\underset{j\leq i}{\sum}c_j.\]
\item $c_i\in\{0,1,2\},\forall 1\leq i\leq n $.
    \item $\max{(k,u_n-k)}\leq \#\{i:c_i\geq 1\}$;
    \item $\min{(k,u_n-k)}\geq\#\{i:c_i=2\}$
    \item $\pi:T^*Gr(k;u_n)\to \mathfrak{gl}_n$ is the moment map for the $GL_{u_n}$ action.
\end{itemize}
Following Theorem~\ref{construction1}, we can give a better description of the above bow variety.

Before that, we need several notations:
\begin{itemize}
\item Introduce the matrix $Nil_{u_n,\cdots, u_1}\in Mat_{u_n\times u_n}$ whose $(u_n-u_i+2,u_n-u_i+1)$-th entry is $1$ if $c_i=2$, and all other entries are $0$.

\begin{example}
$Nil_{3,1,1}=\begin{pmatrix}0&0&0\\1&0&0\\0&0&0\end{pmatrix}$.
\end{example}

\item Introduce the closed subvariety $M_{u_n,\cdots,u_1}$ of $Mat_{u_n\times u_n}$ given by equations:
\[\text{the }(u_n-u_i+2,j)\text{-th entry is zero for }j> u_n-u_i+2\text{ if }c_i=2;  \]
\[\text{the }(j,u_n-u_i+1)\text{-th entry is zero for }j\geq u_n-u_1\text{ if }c_i=2. \]
We denote the embedding $M_{u_n,\cdots,u_1}\hookrightarrow Mat_{u_n\times u_n}$ as $\iota_{u_n,\cdots, u_1}$.

\begin{example}
    $M_{3,1,1}=\left\{\begin{pmatrix}0&a&b\\0&c&0\\0&d&e\end{pmatrix}:\ a,b,c,d,e 
 \in \mathbb{C}\right\}$
\end{example}

\item When $c_i=0$, we have a variety $T_i:=T^*\mathbb{C}^{u_i}$. We have a morphism $\rho_i:T_i\to T^*(\mathbb{C}^{u_n})$
given by:\[\rho_i((\alpha,\beta^T))=\left(\begin{pmatrix}0_{(u_n-u_i)\times 1}\\ \alpha\end{pmatrix},(\begin{pmatrix}0_{(u_n-u_i)\times 1}\\ \beta\end{pmatrix})^T\right),\ \alpha,\beta\in \mathbb{C}^{u_i}\]

\item We have a canonical map $\psi:T^*(\mathbb{C}^{u_n})\to Mat_{u_n\times u_n}$ given by:
\[\psi((\alpha,\beta^T))=\alpha\cdot (\beta^T),\alpha, \beta\in \mathbb{C}^{u_n}\]
\end{itemize}

Now, if we follow the proof in Theorem~\ref{construction1} carefully, we will find that the bow variety above is of the following form:
\[T^*(Gr(k;u_n))\times_{\pi=Nil+\iota+\underset{c_i=0}{\sum}\psi\circ \rho_i }(M_{u_n,\cdots,u_1}\times \underset{c_i=0}{\prod} T_i) \]
\begin{remark}
    Notice that the latter component is isomorphic to $\mathbb{C}^l$ for some $l$.
\end{remark}
Recall the torus action on this bow variety. The torus $\mathbb{T}$ consists of $n$ copies $\mathbb{C}^\times$ each of which is attached to $c_i$ and another global $\mathbb{C}^\times$. We will denote the $\mathbb{C}^\times$ attached to $c_i$ by $\mathbb{C}^\times_i$ and the global $\mathbb{C}^\times$ by $h\mathbb{C}^\times$. The action is as follows:
\begin{itemize}
    \item Notice that $GL_{u_n}$ acts on $T^*(Gr(k,u_n))$ and on $\mathfrak{gl}_{u_n}$ by conjugation. The group $GL_{u_n}$ also acts on $T^*(\mathbb{C}^{u_n})$ by $g(\alpha,\beta^T)=(g\alpha,\beta^Tg^{-1})$.  Once we are given a subtorus of the diagonal matrices in $GL_{u_n}$, we may pull back this action to those $T_i$.
    \item If $c_i=1$, one can embed $\mathbb{C}^\times_i$ into $Z(GL_{u_n})=(\mathbb{C}^*)^{u_n}$ by sending $z$ to $diag(1,\cdots,1,z,1,\cdots ,1)$ where the nontrivial coordinate is the $(u_n-u_i + 1)$-st. The composition of this embedding with the $GL_{u_n}$ action mentioned above is the desired $\mathbb{C}^\times_i$ action.
    \item If $c_i=2$, one can embed $\mathbb{C}^\times_i$ into $Z(GL_{u_n})=(\mathbb{C}^\times)^{u_n}$ by sending $z$ to $diag(1,\cdots,1,z,z,1,\cdots ,1)$ where the nontrivial coordinates are the $(u_n-u_i + 2)$-nd and $(u_n-u_i+1)$-st coordinates. The composition of this embedding with the $GL_{u_n}$ action mentioned above is the desired $\mathbb{C}^\times_i$ action.
    \item  If $c_i=0$, one can embed $\mathbb{C}^\times_i$ into $Z(GL_{u_n})=(\mathbb{C}^\times)^{u_n}$ by sending $z$ to $diag(1,\cdots,1,z,\cdots ,z)$ where the nontrivial coordinates start from the $(u_n-u_i+1)$-st coordinate. The $\mathbb{C}^\times_i$ action on $\mathfrak{gl}_{u_n}$, $T^*(Gr(k,u_n))$ and $T_j,j\neq i$ will be trivial. Its action on $T_i$ will be the composition this embedding with the $GL_{u_n}$ action mentioned above.
    \item There is a $\mathbb{C}^\times$ action on  $T^*(Gr(k,u_n))$ and $T^*(\mathbb{C}^{u_n})$ by dilation of cotangent direction. We can also embed $\mathbb{C}^\times$ into $Z(GL_{u_n})=(\mathbb{C}^\times)^{u_n}$ by sending $z$ to a diagonal matrix whose $(u_n-u_i+1,u_n-u_i+1)$-st coordinate is $z^{-1}$ if $c_i=2$ and all other coordinates are trivial. The above two actions give us a $(\mathbb{C}^\times \times \mathbb{C}^\times)$ action. Viewing $h\mathbb{C}^\times$ as the diagonal will give us the desired action.
\end{itemize}
\begin{remark}
    In fact, we have a simple description of this group action when no $c_i=0$. In this case, the bow variety is just $\pi^{-1}(Nil)$. Notice that there is a torus $(\mathbb{C}^\times)^{u_n}\times \mathbb{C}^\times$ acting on the cotangent bundle. The torus which we take is just $Z_{(\mathbb{C}^\times)^{u_n}\times \mathbb{C}^\times}(Nil)$.
\end{remark}

\begin{example}\label{example1}
When $u_2=u_1=2,\ k=1$ and $n=2$, the corresponding bow variety is 
\[T^*\mathbb{P}^1 \times_{\pi=B} \mathbb{C}^6\] where \[B(x,y,z,w,s,t)=\begin{pmatrix}0&0\\1&0\end{pmatrix}+\begin{pmatrix}0&s\\0&t\end{pmatrix}+\begin{pmatrix}x\\y\end{pmatrix}\begin{pmatrix}z&w\end{pmatrix}=\begin{pmatrix}xz &xw+s\\1+yz&t+yw\end{pmatrix}\]
\end{example}
\begin{example}\label{example2}
    When $u_1=2,u_2=u_3=u_4=4$, $k=2$ and $n=4$, the corresponding bow variety is 
    \[T^*(Gr(2,4) )\times_{\pi=Nil+\iota +\psi\circ \rho_1+\psi\circ \rho_2} (\mathbb{C}^8 \times\mathbb{C}^8 \times \mathbb{C}^8 )\]
    where \begin{itemize}
        \item $Nil=\begin{pmatrix}0&0&0&0\\1&0&0&0\\0&0&0&0\\0&0&1&0\end{pmatrix}$;
        \item 
        $\iota(a_1,\cdots, a_8)=\begin{pmatrix}0&a_1&a_2&a_3\\0&a_4&0&0\\0&a_5&0&a_6\\0&a_7&0&a_8\end{pmatrix}$;
        \item 
        $\psi\circ \rho_1(x_1,\cdots, x_4,y_1,\cdots,y_4)=\begin{pmatrix}x_1\\x_2\\x_3\\x_4\end{pmatrix}\begin{pmatrix}y_1&y_2&y_3&y_4\end{pmatrix} $;
        \item
        $\psi\circ \rho_2(z_1,\cdots, z_4,w_1,\cdots,w_4)=\begin{pmatrix}z_1\\z_2\\z_3\\z_4\end{pmatrix}\begin{pmatrix}w_1&w_2&w_3&w_4\end{pmatrix}$.
    \end{itemize}
\end{example}

Now we want to give a suitable $\mathbb{R}^+$ action on each $2$-row bow variety to apply Theorem~\ref{deformation}.

Notice that, by the previous characterization, such bow variety is of the form
\[T^*(Gr(k,u_n))\times_{\pi=B}(\mathbb{C}^l)  \] where $B$ is a morphism from $\mathbb{C}^l$ to $\mathfrak{gl}_{u_n}$ and $\mathbb{C}^l$ is built up from $M$ (we ignore the index here because there is no ambiguity) and $T_i$. In fact, $B$ is built up from $\iota:M\hookrightarrow \mathfrak{gl}_{u_n}$, $\psi:T^*(\mathbb{C}^{u_n})\to \mathfrak{gl}_{u_n}$ and $\rho_i:T_i\hookrightarrow T^*(\mathbb{C}^{u_n})$. We are going to construct an $\mathbb{R}^+$ action on $T^*(Gr(k,u_n))$, $\mathfrak{gl}_{u_n}$ and $T^*(\mathbb{C}^{u_n} )$ linear on the latter two such that $\mu$ and $\lambda$ are equivariant. Then we will pull back the $\mathbb{R}^+$ action to $M$ and $T_i$ via the map $\iota$ and $\rho_i$ (since these embedding can be viewed as embeddings coming from sub vector space structure). Then, by equivariance, this $\mathbb{R}^+$ action is naturally defined on the bow variety.

To finish the construction, we need the following lemma:

\begin{lemma}\label{Raction}
    Given an integer vector $\vec{v}=(v_1,\cdots,v_{u_n})$, there exists an $\mathbb{R}^+$ action on $T^*(Gr(k,u_n))$, $\mathfrak{gl}_n$ and $T^*(\mathbb{C}^{u_n})$ such that
    \begin{itemize}
        \item The maps $\pi$ and $\psi$ are equivariant with respect to this action.
        \item The $(i,j)$-th entry of $\mathfrak{gl}_n$ is an eigenvector of weight $v_i+2-v_j$.
        \item Consider the standard basis $e_i,1\leq i\leq u_n$ of $\mathbb{C}^{u_n}$ and the dual basis $f_i,i\leq i\leq u_n$. Then $T^*(\mathbb{C}^{u_n})$ can be viewed as a $\mathbb{C}$ vector space with a basis $e_i,f_i,1\leq i\leq u_n$. The $\mathbb{R}^+$ action is just the one making $e_i$ (resp. $f_i$) an eigenvector of weight $v_i$ (resp. $2-v_i$).
    \end{itemize}
\end{lemma}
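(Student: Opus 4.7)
The plan is to realize the desired $\mathbb{R}^+$-action as the restriction, along a suitable cocharacter, of a larger algebraic torus action built from symmetries that are already present on each of the three spaces. Concretely, I would work with the torus $H := (\mathbb{C}^\times)^{u_n} \times \mathbb{C}^\times$ and the cocharacter $\sigma: \mathbb{R}^+ \to H$ defined by $\sigma(t) = (t^{v_1},\dots,t^{v_{u_n}}, t^2)$. Both the weight conditions and the equivariance of $\mu$ and $\lambda$ should then follow from standard facts about the $GL_{u_n}$-action and the cotangent scaling, once one unwinds what $\sigma$ does.

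For the action of $H$: the first factor $(\mathbb{C}^\times)^{u_n}$ is embedded as the diagonal torus in $GL_{u_n}$, and I let it act on $T^*(Gr(k,u_n))$ by the restriction of the natural $GL_{u_n}$-action, on $\mathfrak{gl}_{u_n}$ by conjugation, and on $T^*(\mathbb{C}^{u_n}) = \mathbb{C}^{u_n} \oplus (\mathbb{C}^{u_n})^*$ by $g \cdot (\alpha,\beta^T) = (g\alpha, \beta^T g^{-1})$. The second factor $\mathbb{C}^\times$ acts by dilation of cotangent fibres on $T^*(Gr(k,u_n))$, by overall scaling on $\mathfrak{gl}_{u_n}$, and by scaling of $f_i$ (and trivially on $e_i$) on $T^*(\mathbb{C}^{u_n})$. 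Each of these is a well-defined algebraic action, so restricting along $\sigma$ gives the $\mathbb{R}^+$-action.

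The weight verifications are then immediate: under $\sigma$ the vector $e_i$ transforms as $t^{v_i}$; the covector $f_j$ transforms as $t^{-v_j} \cdot t^2 = t^{2-v_j}$; and the $(i,j)$-entry of $\mathfrak{gl}_{u_n}$ transforms as $t^{v_i} \cdot t^{-v_j} \cdot t^2 = t^{v_i+2-v_j}$. For the equivariance of $\lambda$, the identity $\lambda(g\alpha,\beta^T g^{-1}) = g(\alpha\beta^T)g^{-1}$ handles the $GL_{u_n}$-factor, and a scaling of $\beta$ by $t^2$ visibly scales $\alpha\beta^T$ by $t^2$. For the equivariance of $\mu$, I use the two standard facts that the moment map of the $GL_{u_n}$-action on $T^*(Gr(k,u_n))$ is $GL_{u_n}$-equivariant with respect to conjugation on $\mathfrak{gl}_{u_n}$, and that dilating cotangent fibres by $t^2$ scales $\mu$ by $t^2$ (since $\mu$ is linear in the covector direction).

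There is no real obstacle here; the only place to be careful is the second bullet point on $\mathfrak{gl}_{u_n}$, where one has to remember that the $+2$ shift in the weight $v_i + 2 - v_j$ comes from the dilation $\mathbb{C}^\times$ rather than from the diagonal torus, and that this same $\mathbb{C}^\times$ is what produces the $+2$ in the weight $2 - v_i$ of $f_i$. Once this bookkeeping is in place, the lemma reduces to the equivariance of $\mu$ under cotangent dilation, which is standard.
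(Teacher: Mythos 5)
Your proposal is correct and follows essentially the same route as the paper: pin down the action on $\mathfrak{gl}_{u_n}$ and $T^*(\mathbb{C}^{u_n})$ by the prescribed weights, act on $T^*(Gr(k,u_n))$ through a cocharacter of the diagonal torus of $GL_{u_n}$ combined with cotangent-fibre dilation, and check equivariance of $\mu$ and $\lambda$ by the standard conjugation and scaling identities. The only cosmetic difference is that the paper uses the shifted coweight $\vec{v}-(1,\dots,1)$ on the Grassmannian factor, which gives the same action as your $\vec{v}$ since the central $\mathbb{C}^\times$ acts trivially on $T^*(Gr(k,u_n))$.
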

\begin{proof}
    Notice that the second and the third requirement already determine the $\mathbb{R}^+$ action on $\mathfrak{gl}_{u_n}$ and $T^*(\mathbb{C}^{u_n})$.

    Now let us define the $\mathbb{R}^+$ action on $T^*(Gr(k,u_n))$.

     Let us define $\vec{v}_{mid}:=(v_1-1,\cdots,v_{u_n}-1)$. By viewing $\vec{v}_{mid}$ as a coweight of $(\mathbb{C}^\times)^{u_n}=Z(GL_{u_n})$, we have a map from $\mathbb{R}^+$ to $(\mathbb{C}^\times)^{u_n}=Z(GL_{u_n})\in GL_{u_n} $. Combining this map and the natural action of $GL_{u_n}$ on $T^*(Gr(k,u_n))$, we get an $\mathbb{R}^+$ action on $T^*(Gr(k,u_n))$. Moreover, we have another $\mathbb{R}^+$ action on $T^*(Gr(k,u_n))$ given by rescaling the cotangent vectors. Using these two actions, we get an $\mathbb{R}^+\times \mathbb{R}^+$ action. Then, by embedding $\mathbb{R}^+$ diagonally into $\mathbb{R}^+\times \mathbb{R}^+$, we get the desired $\mathbb{R}^+$ action on $T^*(Gr(k,u_n))$.

     It can be verified directly that the maps $\pi$ and $\psi$ are equivariant with respect to the $\mathbb{R}^+$ action.
\end{proof}

From now on, when we talk about the $\mathbb{R}^+$ action generated by a vector $\vec{v}$, we stick with the action we produced in the above proof. The specific weight vector $\vec{v}=(v_1,\cdots, v_{u_n})$ which will be used to generate the $\mathbb{R}^+$ action in this case is as follows:
\[v_i=\begin{cases}0,&\text{there is a }1\text{ in the }i\text{-th row of }Nil\\2,&\text{there is a }1\text{ in the }i\text{-th column of }Nil\\1,&\text{otherwise} \end{cases}\]

\begin{remark}
The above vector is well-defined since the $Nil$ is blockwise diagonal matrix where the blocks are either $\begin{pmatrix}0\end{pmatrix}$ or $\begin{pmatrix}0&0\\1&0\end{pmatrix}$.
\end{remark}

\begin{example}
    We follow the notation in Example~\ref{example1}. 
    
    The weight vector $\vec{v}$ generating the action is $(2,0)$. The weight of each coordinate is as follows:
    \[wt(x,y,z,w,s,t)=(2,0,0,2,4,2)\]
\end{example}
\begin{example}
    We follow the notation in Example~\ref{example2}. 

    The weight vector $\vec{v}$ generating the action is $(2,0,2,0)$. The weight of each coordinate is as follows:
    \[wt(a_1,a_2,a_3,a_4,a_5,a_6,a_7,a_8)=(4,2,4,2,4,4,2,2)\]
    \[wt(x_1,x_2,x_3,x_4)=wt(z_1,z_2,z_3,z_4)=(2,0,2,0)\]
    \[wt(y_1,y_2,y_3,y_4)=wt(w_1,w_2,w_3,w_4)=(0,2,0,2)\]
\end{example}
Recall that the bow variety in this section is 
\[T^*(Gr(k,u_n))\times_{\pi=B}(\mathbb{C}^l).\]
So we have a natural map:
\[pr:T^*(Gr(k,u_n))\times_{\pi=B}(\mathbb{C}^l)\to \mathbb{C}^l\]
This is just the base change of $\pi$. Since $\pi$ is proper, we know that $\pi$ is proper.

Notice that the standard basis elements of $\mathbb{C}^l$ are eigenvectors of nonnegative weights under the $\mathbb{R}^+$ action.  We can decompose $\mathbb{C}^l$ as $\mathbb{C}^{l_0}\times \mathbb{C}^{l_1}$ where  $\mathbb{C}^{l_0}$ is spanned by vectors of zero weight inside the basis and $\mathbb{C}^{l_1}$ is spanned by vectors of positive weight inside the basis. 

Notice that the data we have collected now satisfy the conditions of Theorem~\ref{deformation}. By this theorem, we know that the bow variety in this section is homotopy equivalent to $pr^{-1}(\mathbb{C}^{l_0})$ via the natural embedding.

\begin{remark}\label{reduction}
    Recall that, under the torus action on the bow variety, the standard basis elements of $\mathbb{C}^l$ are weight vectors. So we know the embedding of $pr^{-1}(\mathbb{C}^{l_0})$ into the bow variety is equivariant with respect to the torus action. Hence, to calculate the equivariant cohomology of the bow variety, it suffices to calculate that of $pr^{-1}(\mathbb{C}^{l_0})$. To give readers an idea on what's going on, when bow variety is a cotangent bundle to a partial flag variety, $pr^{-1}(\mathbb{C}^{l_0})$ is just the zero section.
\end{remark}

Now let us give an explicit description of $pr^{-1}(\mathbb{C}^{l_0})$.
To give a nice description, we need the following notations first:
\begin{itemize}
    \item Take $I$ to be the set of the index $i$ such that $c_i=2$. Give $A$ an enumeration \[I=\{i_1,\cdots,i_p\}\] where $i_1>\cdots>i_p$.
    \item Take $J$ to be the set of the index $i$ such that $c_i=0$ and $i>a_p$.
    Give $J$ an enumeration \[J=\{j_1,\cdots,j_q\}\] where $j_1>\cdots>j_q$.
    \item For $1\leq i\leq q$,  define $r_{i}:=\#\{j:a_j<b_i\}$.
    \item Define $\tilde{T}_i$ to be $T^*(\mathbb{C}^{r_i})=\mathbb{C}^{ r_{i}}\times(\mathbb{C}^{ r_{i}})^* $ for $1\leq i\leq q$.
    \item Define $\tilde{\rho}_i:\tilde{T}_i\to T^*(\mathbb{C}^p)=\mathbb{C}^{p}\times (\mathbb{C}^{p})^*$ by sending $(\alpha,\beta^T)$ to \[\left(\begin{pmatrix}0_{(p-r_i)\times 1}\\\alpha \end{pmatrix},\begin{pmatrix}0_{(p-r_i)\times 1}\\\beta \end{pmatrix}^T\right)\]
    \item Define $\tilde{\psi}: T^*(\mathbb{C}^p) \to Mat_{p\times p} $ by sending $(\alpha,\beta^T)$ to $\alpha\cdot \beta^T$.
\end{itemize}
Then $pr^{-1}(\mathbb{C}^{l_0})$ is isomorphic (via conjugation by a permutation matrix) to 
\[T^*(Gr(k,u_n))\times_{\pi=\begin{pmatrix}0&0\\I_{p}+\underset{1\leq i\leq q}{\sum}\tilde{\psi}\circ \tilde{\rho}_i&0 \end{pmatrix}}\left(\underset{1\leq i\leq q}{\prod}\tilde{T}_i\right)\]

\begin{remark}
Here is the idea on how we derive the above description. Notice that the standard basis of $M_{u_n,\cdots,u_1}$ are of positive weight. So we know that $pr^{-1}(\mathbb{C}^{l_0})$ is a subvariety of 
\[T^*\left(Gr(k;u_n)\right)\times_{\pi=Nil+\underset{c_i=0}{\sum}\psi\circ \rho_i }\left(\underset{c_i=0}{\prod} T_i\right) \]
So we only need to check the weight of each coordinate of $T_i$ to detemrine the $\mathbb{C}^{l_0}$, thus $pr^{-1}(\mathbb{C}^{l_0})$.\end{remark}

We will denote the above subvariety by $Core(k;u_n,\cdots,u_1)$. Similar definitions can be seen in \cite[Section~2]{Kamnitzer:2022} and \cite[Theorem~10.56]{Kirillov:2016} (with a different name there).

One can track the restriction of the torus action on this core part as follows:
\begin{itemize}
    \item Notice that $GL_{u_n}$ acts on $T^*(Gr(k,u_n))$ and $\mathfrak{gl}_{u_n}$ by conjugation. 
    \item There is a torus $\mathbb{T}_{mid}=(\mathbb{C}^\times)^{u_n-2p}$ embedded into $Z(GL_{u_n})$ by sending $(z_1,\cdots,z_{u_n-2p} )$ to $diag(I_p,z_1,\cdots,z_{u_n-2p},I_p)$. The action of $\mathbb{T}_{mid}$ on $T^*(Gr(k,u_n))$ and $\mathfrak{gl}_{u_n}$ is just the composition of the previous embedding and the conjugation. The action of $\mathbb{T}_{mid}$ on $\tilde{T}_i,1\leq i\leq l$ is trivial.  Combining these action, we have a natural action of $\mathbb{T}_{mid}$ on the core.
    \item Notice that there is a $GL_{p}$ action on $T^*(\mathbb{C}^p)=\mathbb{C}^p\times (\mathbb{C}^p)^*$ which is $g(\alpha,\beta^T)=(g\alpha,\beta^T g^{-1})$ . 
    \item There is a torus $\mathbb{T}_{double}=(\mathbb{C}^\times)^p$ embedded into $Z(GL_{u_n})$ by sending $(z_1,\cdots,z_{p} )$ to $diag(z_1,\cdots, z_p,I_{u_n-2p},z_1,\cdots,z_p)$. Then the action of $\mathbb{T}_{double}$ on $T^*(Gr(k,u_n))$ and $\mathfrak{gl}_{u_n}$ just come from the former embedding and the conjugation. The actions of $\mathbb{T}_{double}$ on $T^*(\mathbb{C}^p)$ just comes from the composition the identification of it as $Z(GL_{p})$ with the induced action mentioned before. Then we may pull back this action to those $\tilde{T}^i$ via $\rho_i$. These data gives us the $\mathbb{T}_{double}$ action on the core.
    \item  There is a $\mathbb{C}^\times$ embedded into $Z(GL_{u_n})$ by sending $z$ to $I_{u_n-p}\oplus z^{-1}I_p$. This gives a $\mathbb{C}^\times$ action on $T^*(Gr(k,u_n))$ and $\mathfrak{gl}_{u_n}$. Its action on $T^*(\mathbb{C}^p)$ just comes from $z(\alpha,\beta^T)=(z^{-1}\alpha,\beta^T)$. One can check these data give a $\mathbb{C}^\times$ action on the core. There is another $\mathbb{C}^\times$ action dilating the cotangent fibres of $T^*(Gr(k,n))$ and $T^*(\mathbb{C}^p)$ which induces an action on the core part called dilation action. These two actions give a $\mathbb{C}^\times\times \mathbb{C}^\times$ action. By viewing $h\mathbb{C}^\times$ as the diagonal, we get an $h\mathbb{C}^\times$ action on the core.
    \item For each $i$ such that $c_i=0$, we have that $\mathbb{C}^\times_i$ acts on $\mathfrak{gl}_{u_n}$, $T^*(Gr(k,u_n))$ and $T_j, j\neq i$ trivially. On $T_i=T^*(\mathbb{C}^{r_i})=\mathbb{C}^{r_i}\times (\mathbb{C}^{r_i})^*$, the $\mathbb{C}^\times_i$ acts as $z(\alpha,\beta^T)=(z\alpha,z^{-1}\beta^T)$.
    \item In all, we have a torus $\mathbb{T}=\mathbb{T}_{mid}\times (\underset{c_i=0}{\prod}\mathbb{C}^\times_i)\times \mathbb{T}_{double}\times h\mathbb{C}^\times$ acting on the core. The product of the first three part is just $\mathbb{T}_{sym}$ introduced in \S~\ref{torusactiondef}. 
 Denote $(\underset{c_i=0}{\prod}\mathbb{C}^\times_i)\times \mathbb{T}_{double}\times h\mathbb{C}^\times$ by $\mathbb{T}_{core}$. Later, we will prove that $\mathbb{T}_{core}$-equivariant cohomology of the core part will be concentrated in even degrees. Hence, the $\mathbb{T}$-equivariant cohomology of the core part will be concentrated in even degrees.
\end{itemize}
\begin{remark}
    $\mathbb{T}_{mid}$ is just $\underset{c_i=1}{\prod}\mathbb{C}^\times_i$. $\mathbb{T}_{double}$ is just $\underset{c_i=2}{\prod}\mathbb{C}^\times_i$. 
    There is a natural projection map $\beta:\underset{1\leq i\leq n}{\prod}\mathbb{C}^\times_i\to \underset{c_i=0}{\prod}\mathbb{C}^\times_i$. 
\end{remark}

\begin{example}\label{final1}
    $Core(1;2,2)$ is as follows:
    \[ T^*\mathbb{P}^1\times_{\pi=\begin{pmatrix}0&0\\1+xy&0\end{pmatrix}}(\mathbb{C}\times \mathbb{C})\]
    where $x$ and $y$ are just standard coordiante function on the two copies of $\mathbb{C}$
\end{example}
\begin{example}\label{final2}
    $Core(2;4,4)$ is as follows:
    \[T^*(Gr(2,4))\times_{\pi=B} ((\mathbb{C}^2\times \mathbb{C}^2)\times (\mathbb{C}^2\times \mathbb{C}^2))\]
    where \[B(x_1,x_2,y_1,y_2,z_1,z_2,w_1,w_1)=\begin{pmatrix}0&0&0&0\\0&0&0&0\\1+x_1y_1+z_1w_1&x_1y_2+z_1w_2&0&0\\ x_2y_1+z_2w_1&1+x_2y_2+z_2w_2&0&0\end{pmatrix}.\]
\end{example}
Notice that when $J$ is empty, i.e. there is no $c_i = 0$, we have the following theorem describing the geometry and the equivariant cohomology of the bow variety.
\begin{theorem}\label{firstcal}
    Suppose that $J$ is empty. Then the bow variety is isomorphic to $T^*Gr(k-p,u_n-2p)$. The $\mathbb{T}$-equivariant cohomology of the bow variety is \[H_{ (\underset{c_i=0}{\prod}\mathbb{C}^\times_i)\times \mathbb{T}_{double}\times h\mathbb{C}^\times}^*(pt)\otimes H_{(\mathbb{C}^\times)^{u_n-2p}}^*(Gr(k-p,u_n-2p))\] where $(\mathbb{C}^\times)^{u_n-2p}$ is just the standard torus acting on $Gr(k-p,u_n-2p)$.
\end{theorem}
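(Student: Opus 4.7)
The plan is to first reduce the cohomology computation to the core via Remark~\ref{reduction}, then identify the core as a Grassmannian using the moment-map description of $T^*Gr(k,u_n)$, then upgrade to an algebraic isomorphism of the ambient bow variety with $T^*$ of that Grassmannian, and finally read off the torus weights to assemble the product formula.

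First, by Remark~\ref{reduction}, the equivariant cohomology of the bow variety coincides with that of its core, since the embedding is a $\mathbb{T}$-equivariant homotopy equivalence. When $B$ is empty the index set enumerating the $\tilde T_i$ collapses ($q=0$), so the formula for $Core(k;u_n,\dots,u_1)$ degenerates to a single fiber
\[
Core(k;u_n,\dots,u_1) \;=\; \mu^{-1}(Y), \qquad Y=\begin{pmatrix}0&0\\ I_p&0\end{pmatrix}\in\mathfrak{gl}_{u_n}.
\]
Using the standard description of $T^*Gr(k,u_n)$ as pairs $(V,\phi)$ with $V\subset\mathbb{C}^{u_n}$ a $k$-plane and $\phi\in\mathfrak{gl}_{u_n}$ satisfying $\phi^2=0$ and $\operatorname{im}\phi\subseteq V\subseteq\ker\phi$ (with $\mu(V,\phi)=\phi$), I would observe that $\mu^{-1}(Y)$ is exactly the set of $k$-planes $V$ with $\operatorname{im}Y\subseteq V\subseteq\ker Y$. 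Since $\operatorname{im}Y$ is $p$-dimensional and $\ker Y$ is $(u_n-p)$-dimensional (with $\operatorname{im}Y\subseteq \ker Y$), this is canonically $Gr(k-p,u_n-2p)$; the hypotheses on $k$, $u_n$, and $p$ built into the definition of a 2-row bow variety guarantee nonemptiness.

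Next I would upgrade this to an algebraic isomorphism of the full bow variety with $T^*Gr(k-p,u_n-2p)$. A dimension count shows $\dim\mathrm{Bow}=2(k-p)(u_n-p-k)=\dim T^*Gr(k-p,u_n-2p)$, so the core is half-dimensional in the bow variety. Since the bow variety is smooth and symplectic and the $\mathbb{R}^+$-action retracts it onto the core while rescaling the symplectic form with a single positive weight, the core is a Lagrangian projective subvariety that is also the $\mathbb{R}^+$-fixed locus. A linearization of the contracting $\mathbb{R}^+$-action along the core then realizes the bow variety as a $\mathbb{T}$-equivariant vector bundle over the core, and since the core is Lagrangian and the ambient symplectic form is nondegenerate on fibers, this vector bundle is identified with the conormal (equivalently cotangent) bundle $T^*Gr(k-p,u_n-2p)$. (Alternatively, one can exhibit the isomorphism explicitly from the symplectic reduction description of \S~\ref{section2}, by writing the bow variety as the preimage of a Slodowy-type affine slice through $Y$.)

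Finally, I would read off the torus action on $\mu^{-1}(Y)\cong Gr(k-p,u_n-2p)$ factor by factor. The subtorus $\mathbb{T}_{mid}=\mathrm{diag}(I_p,z_1,\dots,z_{u_n-2p},I_p)$ preserves $Y$ and acts on $\ker Y/\operatorname{im}Y\cong\mathbb{C}^{u_n-2p}$ as the standard diagonal $(\mathbb{C}^\times)^{u_n-2p}$, inducing the standard torus action on $Gr(k-p,u_n-2p)$. The subtorus $\mathbb{T}_{double}=\mathrm{diag}(z_1,\dots,z_p,I_{u_n-2p},z_1,\dots,z_p)$ preserves $Y$ but acts trivially on $\ker Y/\operatorname{im}Y$, hence trivially on the core. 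Each $\mathbb{C}^\times_i$ for $c_i=0$ acts only on the (vanished) $T_i$ factors, so acts trivially. For $h\mathbb{C}^\times$, the central diagonal scales $Y$ by $z^{-1}$ while the cotangent dilation scales it by $z$, so on the diagonal these cancel and the residual action on $\ker Y/\operatorname{im}Y$ is trivial. Combined with the isomorphism bow $\cong T^*Gr(k-p,u_n-2p)$ (homotopy-equivalent to $Gr(k-p,u_n-2p)$), this produces exactly the claimed factorization of the equivariant cohomology.

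The main obstacle is Step~2: lifting the homotopy equivalence between the bow variety and its core to an algebraic isomorphism with $T^*Gr(k-p,u_n-2p)$. Remark~\ref{reduction} only delivers a $\mathbb{T}$-equivariant deformation retract (naive homotopy equivalence), so upgrading this to an algebraic isomorphism requires either the symplectic linearization argument sketched above or an explicit parametrization extracted from \S~\ref{section2}; everything else in the proof is formal.
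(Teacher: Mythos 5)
Your computation of the equivariant cohomology follows essentially the same route as the paper's proof: reduce to the core via Remark~\ref{reduction}, note that $B=\emptyset$ forces $q=0$ so the core is the single Springer fibre $\mu_{k,u_n}^{-1}\left(\begin{smallmatrix}0&0\\I_p&0\end{smallmatrix}\right)$, identify that fibre with $\{V:\operatorname{im}Y\subseteq V\subseteq\ker Y\}\cong Gr(k-p,u_n-2p)$, and then track the torus factors. Your bookkeeping here is actually more careful than the paper's one-line ``track the $\mathbb{T}$ action carefully'' (and you correctly get $Gr(k-p,u_n-2p)$ where the paper's proof has a typo $Gr(k-p,u_n-p)$). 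The one place you go beyond the paper is your Step~2, the claim that the \emph{whole} bow variety is algebraically isomorphic to $T^*Gr(k-p,u_n-2p)$: the paper's own proof never addresses this part of the statement---it only identifies the core and deduces the cohomology formula from the retraction---whereas you sketch a symplectic-linearization argument. That sketch is the only genuinely incomplete piece of your write-up, as you yourself flag: the contracting action supplied by Appendix~\ref{appendix1} is only a real $\mathbb{R}^+$-action, so a Bia{\l}ynicki-Birula-type linearization is not immediate, and identifying the resulting normal bundle with the cotangent bundle of the core requires an argument about the weight of the symplectic form. Since the cohomology assertion (which is all that is used later, in the base case of Theorem~\ref{secondcal}) follows already from the core computation together with Remark~\ref{reduction}, this does not undermine the part of the theorem that matters downstream; but if you want the isomorphism statement itself, you should either carry out the explicit slice parametrization from \S~\ref{section2} or replace the $\mathbb{R}^+$-action by an honest algebraic $\mathbb{C}^\times$-contraction before invoking linearization.
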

\begin{proof}
    Notice that when $J$ is empty, the core is isomorphic to:
    \[\pi_{k,u_n}^{-1}(\begin{pmatrix}0&0\\I_p&0\end{pmatrix})\]
    where $\pi_{k,u_n}$ is the moment map on $T^*(Gr(k,u_n))$.

    We can use Springer description of $T^*(Gr(k,u_n))$ as 
    \[\{(V,X):V\subset \mathbb{C}^{u_n}, dim_\mathbb{C} V=k, X\in Mat_{u_n\times u_n}, Im(X)\in V, XV=0 \}.\]
    Use the standard basis to decompose $\mathbb{C}^{u_n}$ as
    \[\mathbb{C}^p\oplus \mathbb{C}^{u_n-2p}\oplus \mathbb{C}^p.\]
    Call the $i$-th component $W_i$
    Then the core is just 
    \[ \{V:V\subset \mathbb{C}^{u_n}, dim_\mathbb{C} V=k, W_3\subset V, V\in W_2\oplus W_3 \}.\]

    So it's isomorphic to \[\{\overline{V}: \overline{V}\subset W_2, dim_\mathbb{C} \overline{W}=k-p \}\cong Gr(k-p,u_n-p)\]

    If we track the $T$ action carefully, we will find that $ (\underset{c_i=0}{\prod}\mathbb{C}^\times_i)\times \mathbb{T}_{double}\times h\mathbb{C}^\times$ acts trivially and $\mathbb{T}_{mid}$ acts just as the maximal torus of $GL_{u_n-p}$. We are done.
\end{proof}
Recall that by Theorem~\ref{pointful}, we know that pointful bow varieties are obtained by symplectic reduction. So one can talk about the Kirwan surjectivity with respect to this quotient. By following Remark~\ref{compare}, Lemma~\ref{add_1}, and the explicit construction of Hanany-Witten isomorphisms in \cite{RimanyiShou:2020}, we have the following corollary.

\begin{corollary}
    Kirwan surjectivity for equivariant cohomology holds for the bow varieties in Theorem~\ref{firstcal}.
\end{corollary}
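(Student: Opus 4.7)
The plan is to verify that the Kirwan restriction
$$\kappa : H^*_{\mathbb{T}}(T^*Fl_{\vec{\lambda}}) \longrightarrow H^*_{\mathbb{T}}(\mathrm{Bow})$$
coming from the presentation of $\mathrm{Bow}$ as $T^*Fl_{\vec{\lambda}}\sslash_{f_{\vec{\mu}}}U_{\vec{\mu}}$ in Theorem~\ref{pointful} is surjective, where Theorem~\ref{firstcal} already identifies the target as
$$H^*_{(\prod_{c_i=0}\mathbb{C}^\times_i)\times\mathbb{T}_{double}\times h\mathbb{C}^\times}(pt)\otimes H^*_{(\mathbb{C}^\times)^{u_n-2p}}(Gr(k-p,u_n-2p)).$$
It therefore suffices to hit the Chern-root generators of the second (Grassmannian) tensor factor together with the equivariant parameters generating the first (polynomial-ring) factor.

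First I would use Remark~\ref{compare} to read off explicit $\vec\lambda=\vec r$ and $\vec\mu=\vec c$ for the given bow variety, and then apply the Hanany--Witten moves of \cite[\S~3.3]{https://doi.org/10.48550/arxiv.2012.07814} together with Lemma~\ref{add_1} to replace it by an isomorphic bow variety in which the $p$ "paired" columns (those with $c_i=2$) have been pushed to an extremal position. The assumption $B=\emptyset$ is exactly what guarantees that this rearrangement absorbs all of the handsaw structure into the unipotent reduction, leaving an ambient flag variety $Fl_{\vec\lambda'}$ whose middle $u_n-2p$ coordinates carry a distinguished step of dimension $k-p$. Consequently there is a natural forgetful projection $\pi:Fl_{\vec\lambda'}\twoheadrightarrow Gr(k-p,u_n-2p)$ onto this middle block, and the slice identification in Theorem~\ref{firstcal} is compatible with $\pi$.

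Next, I would pull back the Chern roots of the tautological sub- and quotient bundle on $Gr(k-p,u_n-2p)$ along $\pi$ to equivariant classes on $Fl_{\vec\lambda'}$, hence on $T^*Fl_{\vec\lambda'}$ via the homotopy equivalence $T^*Fl_{\vec\lambda'}\simeq Fl_{\vec\lambda'}$. Restricting these along the slice $\mathrm{Bow}\hookrightarrow T^*Fl_{\vec\lambda'}$ produced by Theorem~\ref{pointful} recovers, thanks to the explicit Springer-type description used in the proof of Theorem~\ref{firstcal}, exactly the Chern-root generators of the Grassmannian factor. The generators of the polynomial-ring factor are supplied by the characters of $\mathbb{T}$ acting on $T^*Fl_{\vec\lambda'}$: each character produces a degree-two equivariant class on the ambient cotangent bundle whose restriction to $\mathrm{Bow}$ gives the corresponding equivariant parameter.

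The main obstacle is the bookkeeping needed to match the two torus actions under Hanany--Witten. On the bow-variety side, $\mathbb{T}$ is built vertex by vertex from the individual $\mathbb{C}^\times_i$'s together with the twisted dilation $h\mathbb{C}^\times$ and the nilpotent $Nil_{u_n,\dots,u_1}$; on the ambient cotangent side, the natural torus is the product of the standard diagonal torus of $GL_{u_n}$ with cotangent-fibre scaling. The Hanany--Witten dictionary provides an explicit linear change of characters identifying the two, but confirming that this identification carries the Chern roots above to the claimed generators of $H^*_{\mathbb{T}}(\mathrm{Bow})$ (rather than to twisted versions) is the technical heart of the argument. Once this is done, the surjectivity reduces to the classical surjection $H^*_T(Fl)\twoheadrightarrow H^*_T(Gr)$ for a forgetful map of partial flag varieties, combined with equivariant formality of $Fl_{\vec\lambda'}$.
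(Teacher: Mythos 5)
Your overall strategy matches the paper's (which itself offers only a one-line justification, deferring entirely to the explicit identification in Theorem~\ref{firstcal}): exhibit classes on the ambient cotangent bundle of the flag variety whose restrictions generate the target algebra computed in Theorem~\ref{firstcal}. However, the map you build the second half of your argument around does not exist. There is no ``forgetful projection'' $\pi\colon Fl_{\vec\lambda'}\twoheadrightarrow Gr(k-p,u_n-2p)$: the small Grassmannian parametrizes subspaces of the \emph{subquotient} $W_2\cong\mathbb{C}^{u_n-2p}$ of $\mathbb{C}^{u_n}$, not a coarser step of the ambient flag, so there is nothing to pull Chern roots back along. The correct relationship, visible in the proof of Theorem~\ref{firstcal}, is the closed embedding $Gr(k-p,u_n-2p)\hookrightarrow Gr(k,u_n)$, $\overline{V}\mapsto W_3\oplus\overline{V}$ (recall that for a $2$-row bow variety the ambient flag variety is already the Grassmannian $Gr(k,u_n)$, so the Hanany--Witten rearrangement you invoke is not needed). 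The argument must therefore run in the opposite direction: restrict the ambient tautological subbundle $S$ of $Gr(k,u_n)$ to the core, where it becomes $\underline{W_3}\oplus\overline{S}$ with $\underline{W_3}$ a trivial bundle carrying a $\mathbb{T}$-weight; hence the equivariant Chern classes of $S$ restrict to those of $\overline{S}$ up to classes from $H^*_{\mathbb{T}}(pt)$, and together with the equivariant parameters these generate the tensor-product description of $H^*_{\mathbb{T}}(\mathrm{Bow})$ in Theorem~\ref{firstcal}.

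With that substitution your proof goes through, and the remaining ingredients you list (the unipotence of $U_{\vec\mu}$ giving $H^*_{U\times\mathbb{T}}(T^*Fl)\cong H^*_{\mathbb{T}}(T^*Fl)$, the identification of the torus actions, and the fact that the target is generated over $H^*_{\mathbb{T}}(pt)$ by tautological Chern classes) are exactly what the paper implicitly relies on. So the gap is in the mechanism, not the strategy: replace ``pull back along $\pi$'' by ``restrict along the closed embedding of the core'' and the surjectivity follows.
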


\subsection{A quick calculation of the ordinary cohomology}\label{calforordcoh}
In this subsection, we calculate the ordinary cohomology of $Core(1;2,2)$. We will find that the odd degree cohomologr does not vanish. So we learn that Kirwan surjectivity does not hold for an ordinary cohomology of bow varieties. 

Recall that $Core(1;2,2)$ is \[ T^*\mathbb{P}^1\times_{\pi=\begin{pmatrix}0&0\\1+xy&0\end{pmatrix}}(\mathbb{C}\times \mathbb{C}).\]

Denote the projection map to $\mathbb{C}\times \mathbb{C}$ by $pr$:

The first open subset $V$ which we would like to take is the set of points where $|xy|<1$. Notice that in this case, $1+xy\neq 0$, $pr$ is a homeomorphism on this open set. So $V$ is just $\{(x,y)\in\mathbb{C}^2:|xy|<1\}$ which is contractible.

The second open subset $U$ we take is the set of points where $x\neq 0$. Denote $1+xy$ by $\tilde{y}$. Notice that $\tilde{y}$ and $x$ determine $y$ on this open set. By reparametrization, this open set is isomorphic to 
\[(T^*\mathbb{P}^1\times_{\pi=\begin{pmatrix}0&0\\\tilde{y}&0\end{pmatrix}}\mathbb{C})\times \mathbb{C}^\times\]
The first part is just the union of $\mathbb{P}^1$ and a fibre over a point in $\mathbb{P}^1$. By the natural scaling on the vector bundle, we know that the first part is homotopy equivalent to $\mathbb{P}^1$.  Thus $U$ is homotopy equivalent to $\mathbb{P}^1 \times \mathbb{C}^\times$

Now consider $U\cap V$. It's just $\{(x,y)\in\mathbb{C}^2:|xy|<1,x\neq 0\}$ which is homotopy equivalent to $\mathbb{C}^\times$.

Considering the long exact sequence attached to this open cover, we have an exact sequence
\[H^3(Core(1;2,2))\to H^3( U)\oplus H^3(V) \to H^3(U\cap V)\]

Notice that $H^3(V)=H^3(U\cap V)=0$ and $H^3(U)\neq 0$. So we know that $H^3(Core(1;2,2))\neq 0$. 

\begin{remark}
    In \cite{Krylov:2021}, it's shown that ordinary cohomology of bow varieties are concentrated in even degrees if the row vector satisfies so-called  ``almost dominant" conditions. What we have just calculated is the very first example breaking this condition and the calculation tells us its ordinary cohomology is not concentrated in even degrees.
\end{remark}
\begin{remark}
Notice that ordinary cohomology of flag varieties always vanishes at odd degrees. Now consider the Kirwan map for bow varieties
\[H^*_{\mathbb{U}}(T^* Fl)\to H^*(Bow) \]
where $\mathbb{U}$ is a unipotent group, in which $H^*_{U}(T^* Fl)\cong H^*(T^* Fl)$. We have just proved that the first one is concentrated in even degrees and the latter one has nonzero cohomology in odd degrees. Hence we know that the Kirwan surjectivity does not in general hold for ordinary cohomology of bow varieties.
\end{remark}
\subsection{An explicit calculation of the equivariant cohomology}\label{calforequco}
In this subsection, we calculate the equivariant cohomology of Example~\ref{final1}. It illustrates that localization to fixed points does not in general induce an injection. This answers a question raised in \cite{RimanyiShou:2020}.

The open cover which we take is \begin{itemize}
    \item the open set $U$ where $x\neq 0$;
    \item the open set $V$ where $\left|xy\right|<1$.
\end{itemize}

Notice that when $x\neq 0$, $1+xy$ determines $y$. So we know that $U$ is isomorphic to \[(T^*\mathbb{P}^1\times_{\pi=\begin{pmatrix}0&0\\\tilde{y}&0\end{pmatrix}}\mathbb{C})\times \mathbb{C}^\times = \mathbb{C}^\times \times \pi^{-1}(\{\begin{pmatrix}0&0\\\tilde{y}&0\end{pmatrix}\}:y\in \mathbb{C})\] where $\pi$ is the moment map for $T^*\mathbb{P}^1$. Recall that $\pi^{-1}(\{\begin{pmatrix}0&0\\\tilde{y}&0\end{pmatrix}\}:y\in \mathbb{C})$ is homotopy equivalent to $\mathbb{P}^1$. So we may view $U$ as $\mathbb{C}^\times \times \mathbb{P}^1$ which is invariant under $\mathbb{T}$.

Notice that when $\left|xy\right|<1$, $\pi$ is an isomorphism. So $V$ is just $\{(x,y)\in \mathbb{C}^2:\left|xy\right|<1\}$ which is contractible. So we may retract it to $(0,0)$, which is fixed by $\mathbb{T}$.

Following the idea working for $U$, we can view $U\cap V$ as $\{(x,y)\in \mathbb{C}^2:\left|xy\right|<1,x\neq 0\}$ which is homotopy equivalent to $\{(x,0):x\neq 0\}=\mathbb{C}^\times$. The latter is also $\mathbb{T}$-invariant.

One can view, in the naive homotopy category, the embedding $U\cap V\to U$ as just sending $\mathbb{C}^\times$ to one of the $\mathbb{C}^\times$ over a $\mathbb{T}$-fixed point on $\mathbb{P}^1$. One can view, in the naive homotopy category, the embedding $U\cap V\to V$ as sending $\mathbb{C}^\times$ to a point.

Now let us review the $\mathbb{T}$ action. In fact, we can decompose $\mathbb{T}$ as $\mathbb{C}^\times_1\times \mathbb{T}_2$ where
\begin{itemize}
    \item $\mathbb{C}^\times_1$ acts nontrivially on $x$.
    \item $\mathbb{T}_2\cong (\mathbb{C}^\times)^2$
    \item $\mathbb{T}_2$ acts trivially on $x$.
    \item $\mathbb{T}_2$ acts on $\mathbb{P}^1$ via the embedding $\mathbb{T}_2\cong (\mathbb{C}^\times)^2\subset GL_2$.
    \end{itemize}

So we know that
\begin{itemize}
    \item $H_\mathbb{T}^*(U)=\mathbb{C}[x_1,x_2,v]/((v-x_1)(v-x_2))$ where $deg(x_1)=deg(x_2)=deg(v) = 2$.
    \item $H_\mathbb{T}^*(V)=\mathbb{C}[x_1,x_2,x_3]$ where $deg(x_i)=2,1\leq i\leq 3$.
    \item $H_\mathbb{T}^*(U\cap V)=\mathbb{C}[x_1,x_2]$ where $deg(x_i)=2, i=1,2$.
    \item The induced ring homomorphism between them is just identifying variables of the same name and sending others to zero.
\end{itemize}

Using the Mayer–Vietoris sequence (\cite[Chapter~3]{AndersonFulton:2023}), we know that the $\mathbb{T}$-equivariant cohomology of this bow variety is 
\[ \mathbb{C}[x_1,x_2,x_3,v]/((v-x_1)(v-x_2),vx_3).\]

Notice that the degree $2$ part is of dimension $4$. On the other hand, there is only one fixed point. So the localization map sends the degree $2$ part to a three dimensional vector space, which can't be injective.

Moreover, equivariant Kirwan surjectivity holds for this bow variety.

\subsection{A general method of calculation}
In this subsection, we calculate the equivariant cohomology of $2$-row bow varieties. 

Recall that, by Remark~\ref{reduction}, it suffices to calculate the $\mathbb{T}$-equivariant cohomology of $Core(k;u_n,\cdots,u_1)$. The core itself is described as follows:

\[T^*(Gr(k,u_n))\times_{\pi=\begin{pmatrix}0&0\\I_{p}+\underset{1\leq i\leq q}{\sum}\tilde{\psi}\circ \tilde{\rho}_i&0 \end{pmatrix}}(\underset{1\leq i\leq q}{\prod}\tilde{T}_i)\]
where
\begin{itemize}
     \item $c_i,1\leq i\leq n$ is a sequence such that 
\[u_i=\underset{j\leq i}{\sum}c_j.\]     
     \item $I$ is the set of the indices $i$ such that $c_i=2$. Give $I$ an enumeration \[I=\{i_1,\cdots,i_p\}\] where $i_1>\cdots>i_p$.
    \item $J$ is the set of the index $i$ such that $c_i=0$ and $i>i_p$.
    Give $J$ an enumeration \[J=\{j_1,\cdots,j_q\}\] where $j_1>\cdots>j_q$.
    \item For $1\leq i\leq q$, $r_{i}:=\#\{j:a_j<b_i\}$.
    \item $\tilde{T}_i$ is $T^*(\mathbb{C}^{r_i})=\mathbb{C}^{ r_{i}}\times(\mathbb{C}^{ r_{i}})^* $ for $1\leq i\leq q$.
    \item  $\tilde{\rho}_i:\tilde{T}_i\to T^*(\mathbb{C}^p)=\mathbb{C}^{p}\times (\mathbb{C}^{p})^*$ 
    sends $(\alpha,\beta^T)$ to \[(\begin{pmatrix}0_{(p-r_i)\times 1}\\\alpha \end{pmatrix},\begin{pmatrix}0_{(p-r_i)\times 1}\\\beta \end{pmatrix}^T)\]
    \item $\tilde{\psi}: T^*(\mathbb{C}^p) \to Mat_{p\times p} $ sends $(\alpha,\beta^T)$ to $\alpha\cdot \beta^T$. 
\end{itemize}
Notice that $\tilde{T}_i=T^*(\mathbb{C}^{r_i})=\mathbb{C}^{r_i}\times (\mathbb{C}^{r_i})^*$. We may view it as 
\[ Spec(\mathbb{C}[x_{i,j},y_{i,j}:1\leq j\leq r_i])\] where $x_{i,j}$ are coordinates on the manifold $\mathbb{C}^{r_i}$ and $y_{i,j}$ are coordinates for the cotangent vectors. To calculate the equivariant cohomology of bow varieties by induction, let us introduce the rank of this core
\[rank(k;u_n,\cdots,u_1):=\underset{1\leq i\leq q}{\sum}r_i\]
Notice that when rank is zero, we run into the case in Theorem~\ref{firstcal}.

Now we are ready to introduce the open covers of the core.

The first open set $U$ is the locus where $x_{q,1}$ is nonzero. This is $\mathbb{T}$-equivariant since $x_{q,1}$ is a weight vector.

The second open set $V$ is the locus where $\left|x_{q,1}y_{q,1}\right|<1$. This is $\mathbb{T}$-equivariant since $x_{q,1}y_{q,1}$ is of weight $0$.

Then we will use this cover to show that

\begin{theorem}\label{secondcal}
    $H^*_{\mathbb{T}_{core}}(Core(k;u_n,\cdots,u_1 ))$ is concentrated in even degrees. 
\end{theorem}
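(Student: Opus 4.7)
The strategy is induction on the rank $r := \sum_{i=1}^q r_i$ of the core, using the Mayer--Vietoris sequence associated to the open cover $\{U, V\}$ just constructed. The base case $r = 0$ is Theorem~\ref{firstcal}: the core is then $T^* Gr(k-p, u_n - 2p)$, and its $\mathbb{T}_{core}$-equivariant cohomology is a polynomial ring tensored with the cohomology of a Grassmannian, hence even.

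For the inductive step, I plan to produce $\mathbb{T}_{core}$-equivariant strong deformation retractions of each piece of the cover onto a space of strictly smaller rank, or a product of such a space with $\mathbb{C}^\times$. Concretely: (i) $V$ should retract onto the closed subcore $V_0 := V \cap \{x_{q,1} = y_{q,1} = 0\}$, using the radial scaling $(x_{q,1}, y_{q,1}) \mapsto (t x_{q,1}, t y_{q,1})$, which stays inside $V$ for $t \in [0,1]$ and lifts to the fiber product by properness of $\pi$, in the spirit of Theorem~\ref{deformation}. (ii) $U$ should retract onto a product $\mathbb{C}^\times \times Core'$, where $\mathbb{C}^\times$ is parametrized by $x_{q,1}$ and $Core'$ is a rank-$(r-1)$ core produced by the change of variable $\tilde y_{q,1} := 1 + x_{q,1} y_{q,1} + (\text{corrections from } \tilde \rho_i,\; i<q)$ modeled on the computation of \S~\ref{calforequco}. (iii) $U \cap V$ should retract onto $\mathbb{C}^\times$ times a contractible (or very small) subspace of $Core'$.

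By the inductive hypothesis the retracts of $V$, $U$, and $U\cap V$ appearing in (i)--(iii) all have even $\mathbb{T}_{core}$-equivariant cohomology; the $\mathbb{C}^\times$-factor is handled by Appendix~\ref{appendix2} and the K\"unneth formula, since it carries a nonzero weight under the circle $\mathbb{C}^\times_{b_q}\subset \mathbb{T}_{core}$. Moreover, the restriction map $H^*_{\mathbb{T}_{core}}(U) \to H^*_{\mathbb{T}_{core}}(U\cap V)$ is surjective, because in the product description it corresponds to pulling back along the inclusion of a contractible subspace into $Core'$ (the concrete mechanism of this surjectivity is already visible in the computation of \S~\ref{calforequco}). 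Hence the Mayer--Vietoris long exact sequence splits into short exact sequences in each even degree, and the odd-degree pieces of $H^*_{\mathbb{T}_{core}}(Core)$ vanish.

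The main obstacle I expect is carrying out the retractions (i) and (ii) equivariantly while identifying the retracts precisely with cores in our family. The $\mathbb{R}^+$-scaling used for (i) has to descend from the ambient product $T^* Gr(k, u_n) \times \prod \tilde T_i$ onto the fiber-product locus cut out by the moment map, and that locus depends nontrivially on $x_{q,1} y_{q,1}$. For (ii), the coordinate change producing the $\mathbb{C}^\times$-factor must interact cleanly with the other rank-one contributions $\tilde \lambda \circ \tilde \rho_i$ for $i < q$, and one needs to verify that after the change of variable the resulting equations define a $2$-row core of the correct combinatorial type; this bookkeeping, rather than any deep homological input, is where I anticipate the real work.
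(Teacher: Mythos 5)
Your overall framework — induction on rank, Mayer--Vietoris for the cover $\{U,V\}$, and equivariant retractions of $U$, $V$, $U\cap V$ onto lower-rank cores (or a core times $\mathbb{C}^\times$) — matches the paper, and the base case and the reduction of each open set are handled the same way. The gap is in the key surjectivity step that makes Mayer--Vietoris split. You propose to show that $H^*_{\mathbb{T}_{core}}(U)\to H^*_{\mathbb{T}_{core}}(U\cap V)$ is surjective by identifying $U\cap V$ with $\mathbb{C}^\times$ times a contractible subspace of $U$'s retract $Core'$. That picture holds in the rank-one example of \S\ref{calforequco} only because there $Core_V$ happens to be a point; it does not generalize. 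In general $U$ retracts onto $\mathbb{C}^\times\times Core_U$ and $U\cap V$ onto $\mathbb{C}^\times\times Core_V$, where $Core_U$ and $Core_V$ are \emph{different} cores (built from different modified $u$-sequences), and there is no natural contractible inclusion of one into the other. So the mechanism you cite does not yield surjectivity.

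What the paper does instead is show that $H^*_{\mathbb{T}_{core}}(V)\to H^*_{\mathbb{T}_{core}}(U\cap V)$ is surjective in even degrees, and this is where Lemma~\ref{ref-appendix-b} is actually used. After retraction, $V$ and $U\cap V$ have the \emph{same} underlying core $Core_V$; the only difference is the equivariance group: $V$ gives $H^*_{\mathbb{T}_3}(Core_V)$ while $U\cap V$ (being $Core_V\times\mathbb{C}^\times$, with the free $\mathbb{C}^\times$ absorbing one circle factor via \cite[Example~4.4]{AndersonFulton}) gives $H^*_{\mathbb{T}'_3}(Core_V)$ for a codimension-one subtorus $\mathbb{T}'_3\subset\mathbb{T}_3$. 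By the inductive hypothesis $H^*_{\mathbb{T}_3}(Core_V)$ is even, so Lemma~\ref{ref-appendix-b} (Franz's circle result) gives exactly the surjectivity of the restriction from the bigger torus to the smaller. You do invoke the appendix, but only to ``handle the $\mathbb{C}^\times$ factor via K\"unneth''; that is not how it enters. The Franz lemma is the surjectivity step itself, applied to the arm $V\to U\cap V$, not $U\to U\cap V$, and that choice is forced because only $V$ and $U\cap V$ share the same underlying core. Reorienting your argument around that arm, and recognizing that the bookkeeping about which modified core shows up matters precisely because $V$ and $U\cap V$ must match, would close the gap.
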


Using the Serre spectral sequence, one can show that

\begin{corollary}
    $H^*_\mathbb{T}(Core(k;u_n,\cdots,u_1 )) $ is concentrated in even degrees.
\end{corollary}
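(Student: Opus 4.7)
The plan is induction on the rank $r := r_1+\cdots+r_q$. The base case $r=0$ is exactly Theorem~\ref{firstcal}: the core is then $T^*\mathrm{Gr}(k-p,u_n-2p)$, with $\mathbb{T}$-equivariant cohomology a polynomial ring tensored with $H^*_{(\mathbb{C}^\times)^{u_n-2p}}(\mathrm{Gr}(k-p,u_n-2p))$, manifestly in even degrees.

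For the inductive step I would apply the $\mathbb{T}_{core}$-equivariant Mayer--Vietoris sequence to the cover $\{U,V\}$ set up in the statement. The three pieces $U$, $V$, $U\cap V$ should each equivariantly deformation-retract onto a lower-rank object. On $V = \{|x_{q,1}y_{q,1}|<1\}$, an $\mathbb{R}^+$-retraction in the spirit of Lemma~\ref{Raction}, with $x_{q,1}$ and $y_{q,1}$ given strictly positive weights while preserving moment-map equivariance, contracts $V$ onto $\{x_{q,1}=y_{q,1}=0\}$, which is itself a core of rank $r-1$. On $U = \{x_{q,1}\neq 0\}$, the invertibility of $x_{q,1}$ lets us solve for $y_{q,1}$ from the relevant entry of the moment-map equation, realising $U$ as a $\mathbb{C}^\times$-bundle (in the $x_{q,1}$-coordinate) over a core of rank $r-1$. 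The intersection $U\cap V$ combines these two reductions and retracts to a $\mathbb{C}^\times$-bundle over a core of rank $r-1$. By the inductive hypothesis, the $\mathbb{T}_{core}$-equivariant cohomology of each of the three retracts is then concentrated in even degrees.

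The Mayer--Vietoris long exact sequence then forces $H^{\mathrm{odd}}_{\mathbb{T}_{core}}(Core) = 0$, provided the restriction map $H^*_{\mathbb{T}_{core}}(U) \oplus H^*_{\mathbb{T}_{core}}(V) \to H^*_{\mathbb{T}_{core}}(U\cap V)$ is surjective in every degree. This surjectivity is the main obstacle, and it is precisely where the equivariance is indispensable: non-equivariantly it fails, as \S~\ref{calforordcoh} already exhibits a nonzero $H^3$ on $Core(1;2,2)$. To secure it I would invoke the appendix result of Matthias Franz on circle-equivariant cohomology, applied to the $\mathbb{C}^\times$-subtorus acting by $(z,z^{-1})$ on $(x_{q,1},y_{q,1})$: its fixed locus is exactly the lower-rank core that already appeared in the analyses of $U$ and $V$, so the appendix should supply the missing surjectivity once its hypotheses --- essentially a compatibility between this $\mathbb{C}^\times$-action and the open cover $\{U,V\}$, together with the even-degree concentration of the fixed-locus cohomology coming from the inductive hypothesis --- are verified. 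This last verification is the step I expect to require the most care.
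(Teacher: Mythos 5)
Your argument, as written, establishes that $H^*_{\mathbb{T}_{core}}(\mathrm{Core})$ is concentrated in even degrees --- you say so explicitly at the point where you invoke Mayer--Vietoris to conclude $H^{\mathrm{odd}}_{\mathbb{T}_{core}}(\mathrm{Core})=0$. But that is exactly Theorem~\ref{secondcal}, which is already proved in the paper by the same Mayer--Vietoris induction on rank, the same deformation retractions of $U$, $V$, $U\cap V$ onto lower-rank cores (via Lemma~\ref{category} and the $\mathbb{R}^+$-retractions of Appendix~\ref{appendix1}), and the same appeal to Lemma~\ref{ref-appendix-b} for surjectivity of $H^*_{\mathbb{T}_{core}}(V)\to H^*_{\mathbb{T}_{core}}(U\cap V)$. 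You could have cited it. The statement you were asked to prove is the Corollary, which concerns the larger torus $\mathbb{T}=\mathbb{T}_{mid}\times\mathbb{T}_{core}$, and your proposal never makes the passage from $\mathbb{T}_{core}$ to $\mathbb{T}$.

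That missing step is the entire content of the Corollary, and the paper handles it with a Serre spectral sequence (as its one-line proof indicates). Concretely: since $\mathbb{T}=\mathbb{T}_{mid}\times\mathbb{T}_{core}$, one has a fibration
\[
E\mathbb{T}_{core}\times_{\mathbb{T}_{core}}\mathrm{Core}\;\hookrightarrow\; E\mathbb{T}\times_{\mathbb{T}}\mathrm{Core}\;\longrightarrow\; B\mathbb{T}_{mid},
\]
with simply connected base, whose Serre spectral sequence has $E_2 = H^*(B\mathbb{T}_{mid})\otimes H^*_{\mathbb{T}_{core}}(\mathrm{Core})$. By Theorem~\ref{secondcal} the fibre cohomology is concentrated in even degrees, $H^*(B\mathbb{T}_{mid})$ is a polynomial ring in degree-$2$ generators, so the $E_2$ page is entirely in even total degree; the spectral sequence degenerates and $H^*_{\mathbb{T}}(\mathrm{Core})$ is concentrated in even degrees. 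You should add this. A secondary, smaller point: in your sketch of the inductive step you describe the retracts of $U$ and $U\cap V$ as having rank exactly $r-1$ and you describe Lemma~\ref{ref-appendix-b} as a fixed-point statement; the actual rank drop for $U$ is $r_q+q-1$ (only strictly positive is needed), and the appendix lemma is applied to the restriction from the core torus $\mathbb{T}_3$ of the lower-rank core to a codimension-one subtorus $\mathbb{T}'_3$ that acts trivially on $x_{q,1}$, not to a fixed-locus computation --- but these are cosmetic compared with the missing bridge from $\mathbb{T}_{core}$ to $\mathbb{T}$.
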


Firstly, we are going to use the following lemma to reduce the above two open sets to bow varieties of smaller size.

\begin{lemma}\label{category}
    Suppose we are working in a category admitting finite limits. Fix $X$, $Y$, $Z$, and a group object $G$. Suppose that we have the following two actions of $G$:
    \[\rho_X:G\times X\to X;\]
    \[\rho_Y:G\times Y\to Y;\]
    a $G$-equivariant morphism:
    \[\phi:X\to Y;\]
    a morphism:
    \[\psi:Z\to Y;\]
    and a morphism
    \[\lambda:Z\to G.\]
    Notice that we have a morphism $\psi_\lambda:Z\to Y$ which is the composition of
    \[\begin{tikzcd}Z\arrow[r,"\lambda\times \psi"]&G\times Y\arrow[r,"\rho_Y"]&Y \end{tikzcd}\]
    
    Then we have a canonical isomorphism:
    \[ \lim\begin{tikzcd}
        X\arrow[r,"\phi"]&Y&Z\arrow[l,"\psi"]
    \end{tikzcd} \cong \lim \begin{tikzcd}X\arrow[r,"\phi"]&Y&Z\arrow[l,"\psi_\lambda"]\end{tikzcd}\]
\end{lemma}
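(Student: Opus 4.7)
The plan is to construct the claimed isomorphism explicitly, together with its inverse, and to verify the universal property of the pullback. Since the statement is set in an arbitrary category admitting finite limits, I would work via the Yoneda embedding: for every test object $T$, exhibit a natural bijection between $T$-points of the two pullbacks. Once the bijection is verified to be natural in $T$, the universal property supplies the claimed isomorphism of pullback objects.

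First, unpack a $T$-point of $\lim(X \xrightarrow{\phi} Y \xleftarrow{\psi} Z)$ as a pair $(x,z)$ with $x \in X(T)$, $z \in Z(T)$, and $\phi \circ x = \psi \circ z$. Define
\[ \Phi_T \colon (x,z) \longmapsto \bigl(\rho_X \circ (\lambda \circ z,\, x),\ z\bigr). \]
To see this lands in the twisted pullback, I would compute
\[ \phi \circ \rho_X \circ (\lambda z, x) \;=\; \rho_Y \circ (\lambda z, \phi \circ x) \;=\; \rho_Y \circ (\lambda z, \psi \circ z) \;=\; \psi_\lambda \circ z, \]
where the first equality is $G$-equivariance of $\phi$ and the second is the pullback condition $\phi x = \psi z$.

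The inverse $\Psi_T$ sends a $T$-point $(x',z)$ of the twisted pullback to $\bigl(\rho_X \circ ((\lambda \circ z)^{-1},\, x'),\, z\bigr)$, where $(-)^{-1}$ denotes postcomposition with the inversion morphism of the group object $G$. A symmetric computation, using $G$-equivariance of $\phi$ together with the defining condition $\phi x' = \psi_\lambda z = \rho_Y(\lambda z,\psi z)$, shows that $\Psi_T(x',z)$ lies in the untwisted pullback. That $\Phi_T\circ\Psi_T$ and $\Psi_T\circ\Phi_T$ are the identity reduces to the unit and associativity axioms of the action, giving $\rho_X(g,\rho_X(g^{-1},x))=x$ and its mirror. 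Naturality in $T$ is immediate from the functoriality of all the data involved.

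I do not foresee a serious obstacle here. The content of the lemma is simply that $G$-equivariance of $\phi$ allows a $G$-valued twist of $\psi$ to be absorbed without changing the pullback. The only care required is to phrase the construction diagrammatically (or Yoneda-theoretically) so that it makes sense in an arbitrary category with finite limits; once one commits to the Yoneda reduction to sets, the verifications above are routine manipulations of the group-action axioms and the equivariance identity.
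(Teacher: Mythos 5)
Your proposal is correct and is essentially the paper's own argument: both twist the $X$-component by $\rho_X(\lambda(z),-)$ and untwist with the group inverse, using equivariance of $\phi$ to check the new pullback condition. The paper writes this twist as an explicit composite of diagonals, $\lambda$, and the actions on $X\times Y\times Z$, while you phrase it via generalized points, but these are the same construction.
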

\begin{proof}
Denote the LHS by $L$ and the RHS by $R$.

Notice that a morphism $\alpha:S\to L$ is equivalent to a morphism $\alpha_X\times \alpha_Y\times \alpha_Z:S\to X\times Y\times Z$ such that
\[\phi\circ \alpha_X=\alpha_Y,\quad \psi\circ \alpha_Z=\alpha_Y\]
A similar argument holds for $R$.

Denote the following composition of morphisms by $\gamma_1$:
\[\begin{tikzcd}X\times Y\times Z\arrow[d,"id\times id\times \Delta_Z"]\\X\times Y\times Z\times Z\arrow[d,"id\times id \times id \times \Delta_Z" ]\\ X\times Y\times Z\times Z\times Z\arrow[d,"id\times id\times \lambda\times \lambda "]\\X\times Y\times G\times G\times Z\arrow[d,"\rho_X\times\rho_Y\times id"]\\ X\times Y \times Z\end{tikzcd}\]

One can verify directly that this induces a morphism
\[\gamma'_1:X\times Y\times Z\to R. \]

On the other hand, there is a canonical morphism:$\iota: L\to X\times Y\times Z$. Hence we get a morphism $\tilde{\gamma}_1:L\to R$.

We can use a similar construction to get the inverse of this morphism. Hence we know that $L$ is canonically isomorphic to $R$.
\end{proof}

In practical use of our cases, we will always assume that $X$ is $T^*(Gr(k,u_n)) $, $Y$ is $\mathfrak{gl}_{u_n}$, $\phi$ is the moment map, $Z$ is an open subset of $\underset{1\leq i\leq q}{\prod}\tilde{T}_i$, $\psi$ is the restriction of $\underset{1\leq i\leq q}{\sum}\tilde{\psi}\circ \rho_i$, $G$ is $GL_{u_n}$, and the action is just the one coming from conjugation. 

For the open set $U$, we take $Z$ to be the open subset of $\underset{1\leq i\leq q}{\prod}\tilde{T}_i$ such that $x_{q,1}\neq 0$ and take $\lambda$ to be the map sending $(x_{i,j},y_{i,j})_{1\leq i\leq q,1\leq j\leq r_i}$ to 
\[diag(\begin{pmatrix}1&\beta^T\\0&I_{r_q-1}\end{pmatrix}, I_{u_n-r_q}) \cdot diag (I_{u_n-r_q},\begin{pmatrix}1&0\\ \beta & I_{r_q-1}\end{pmatrix})\]
where $\beta=-\begin{pmatrix}\frac{x_{q,2}}{x_{q,1}}\\\cdots\\\frac{x_{q,r_q}}{x_{q,1}}\end{pmatrix}$. 

After applying Lemma~\ref{category} and picking up another basis of $\tilde{T}_{j},j<q$, we know that the open set is isomorphic to
\[T^*(Gr(k,u_n))\times_{\pi=\begin{pmatrix}0&0\\I_{p}+c_q+\underset{1\leq i<q}{\sum}\tilde{\psi}\circ \tilde{\rho}_i&0 \end{pmatrix}}((\underset{1\leq i< q}{\prod}\tilde{T}_i)\times Spec(\mathbb{C}[x_{q,j},y_{q,j},1/x_{q,1}:1\leq j\leq r_q]))\]
where $c_q=diag(0_{(p-r_q)\times ( p - r_q)},\underset{1\leq j\leq r_q}{\sum}x_{q,i}y_{q,i}, 0_{(r_q-1)\times (r_q - 1)})$. 

Suppose that coordinate functions of $\tilde{T}_i,1\leq i<1$ with respect to the new basis are $\tilde{x}_{i,j},\tilde{y}_{i,j}$. We have the following relations
\[\tilde{x}_{i,l}=x_{i,l}-x_{i,r_i-r_q+1}\frac{x_{q,l-r_i+r_q}}{x_{q,1}},1\leq i<1,l>r_i-r_q+1; \]
\[\tilde{x}_{i,j}=x_{i,j},\text{ otherwise}.\]
\[\tilde{y}_{i,r_i-r_q+1}=y_{i,r_i-r_q+1}+\underset{r_i-r_q<l\leq r_i}{\sum}y_{i,l}\frac{x_{q,l-r_i+r_q}}{x_{q,1}},1\leq i<l;\]
\[\tilde{y}_{i,j}=y_{i,j},\text{ otherwise}.\]

Setting $\vec{v}=(1,2,\cdots,2,1,,\cdots,0)$ where $1$ appears at the first and $u_n-p+1$-th coordinate in Lemma~\ref{Raction} to generate an $\mathbb{R}^+$ action on $T^*(Gr(k,u_n)$ and $\mathfrak{u_n}$. Then we define an $\mathbb{R}^+$ action on $\tilde{x}_{i,j},\tilde{y}_{i,j},1\leq i\leq q,1\leq j\leq r_i$ by the following weight arrangement
\[wt(1+\tilde{x}_{q,1}\tilde{y}_{q,1})=2,wt(\tilde{x}_{q,1})=0;\]
\[wt(\tilde{x}_{q,j})=wt(\tilde{x}_{q,j})=1,2\leq j\leq r_q;\]
\[wt(\tilde{x}_{i,r_i-r_q+1})=wt(\tilde{y}_{i,r_i-r_q+1})=1,1\leq i<q;\]
\[wt(\tilde{x}_{i,j})=wt(\tilde{y}_{i,j})=0,1\leq i<1,2\leq j\leq r_i.\]
One can verify the above $\mathbb{R}^+$ actions will give us an $\mathbb{R}^+$ action on the open set $U$.

Now we may apply Remark~\ref{alternation} to $U$ with respect to the $\mathbb{R}^+$ action we define above. In the naive homotopy category, we can think of this open set as the closed subset defined by the following equations:
\[ 1+\tilde{x}_{q,1}\tilde{y}_{q,1}=0;\]
\[\tilde{x}_{q,j}=\tilde{y}_{q,j}=0,2\leq j\leq r_q;\]
\[\tilde{x}_{i,r_i-r_q+1}=\tilde{y}_{i,r_i-r_q+1}=0,1\leq i<q.\]

Using the relation between $(x_{i,j},y_{i,j})$ and $(\tilde{x}_{i,j},\tilde{y}_{i,j})$, we know that the above closed subset is in fact defined by the the following equations:
\[ 1+x_{q,1}y_{q,1}=0;\]
\[x_{q,j}=y_{q,j}=0,2\leq j\leq r_q;\]
\[x_{i,r_i-r_q+1}=y_{i,r_i-r_q+1}=0,1\leq i<q.\]
Notice that LHS of these equations are $\mathbb{T}$-eigenvectors. So the embedding of this closed subset into $U$ is $\mathbb{T}$-equivariant. Combining this with the fact they are isomorphic via this embedding in the naive homotopy category, to calculate the $\mathbb{T}$-equivariant cohomology of $U$, it suffices to calculate that for the closed subset.

Moreover, using the defining relation involving $x_{i,j},y_{i,j}$, we know that this closed subset is exactly
\[ Core(k;u_{n},\cdots,u_{b_q},u_{b_q-2},u_{b_q-3},\cdots,u_{j},u_{j-1}+1,u_{j-1},\cdots,u_1)\times \mathbb{C}^\times\]
where $j$ is the maximal index such that $c_j=2$ and $j<b_q$. The $\mathbb{C}^\times$ here represents $x_{q,1}$.

Denote the torus $\mathbb{T}_{core}$ attached to $Core(k;u_n,\cdots,u_1)$ as $\mathbb{T}_1$, and the one attached to the above one as $\mathbb{T}_2$. Then we can realize $\mathbb{T}_2$ as a subtorus of $\mathbb{T}_1$ such that 
\begin{itemize}
    \item it acts on $\mathbb{C}^\times$ trivially.
    \item the embedding of that closed subset into $Core(k;u_n,\cdots,u_1)$ is equivariant.
    \item it is of codimension $1$ in $\mathbb{T}_1$.
\end{itemize}
The first and third arguments above tell us that we may identify $\mathbb{C}^\times$ as $\mathbb{T}_2/\mathbb{T}_1$.

By \cite[Example~4.4]{AndersonFulton:2023}, we know that 
\begin{align*}&H^*_{\mathbb{T}_2}Core(k;u_{n},\cdots,u_{b_q},u_{b_q-2},u_{b_q-3},\cdots,u_{j},u_{j-1}+1,u_{j-1},\cdots,u_1)\\=\ &H^*_{\mathbb{T}_1}Core(k;u_n,\cdots,u_1)\end{align*}
Notice that
\begin{align*}&rank(k;u_{n},\cdots,u_{b_q},u_{b_q-2},u_{b_q-3},\cdots,u_{j},u_{j-1}+1,u_{j-1},\cdots,u_1)\\=\ &rank(k;u_n,\cdots,u_1)-r_q-q+1\\<\ &rank(k;u_n,\cdots,u_1)\end{align*}

So we have shown that $\mathbb{T}_{core}$ equivariant cohomology of $U$ is $\mathbb{T}$ equivariant cohomology of a $2$-row bow variety with strictly smaller rank.

\begin{example}
    Consider Example~\ref{final2}. After using Lemma~\ref{category}, we know that the open set $U$ of $Core(1;2,2)$ is isomorphic to 
    \[T^*(Gr(2,4))\times_{\pi=B'} ((\mathbb{C}^2\times \mathbb{C}^2)\times (\mathbb{C}^2\times \mathbb{C}^2))\]
    where \[B'(x_1,x_2,y_1,y_2,z_1,z_2,w_1,w_1)=\begin{pmatrix}0&0&0&0\\0&0&0&0\\\tilde{y}_1+x_2y_2+ z_1\tilde{w}_1&x_1y_2+z_1w_2&0&0\\ \tilde{z}_2\tilde{w}_1&1+\tilde{z}_2w_2&0&0\end{pmatrix};\]
    \[\tilde{y}_1=1+xy\qquad \tilde{z}_2=z_2-\frac{x_2}{x_1}z_1\qquad \tilde{w}_1=w_1+\frac{x_2}{x_1}w_2.\]
The $\mathbb{R}^+$-action we take on this core will act on $T^*(Gr(2,4))$ and $\mathfrak{gl}_4$ by the action induced by vector $(1,2,1,0)$ as in Lemma~\ref{Raction} and act on $\mathbb{C}^8$ with the following weights:
\[wt(\tilde{y}_1,\tilde{w}_1,y_2,z_1,x_2,x_1,\tilde{z}_2,w_2)=(2,1,1,1,1,0,0,0).\]
Then, using Remark~\ref{alternation}, we can view this $U$ as the closed subset of $U$ where $\tilde{y}_1=x_2=y_2=z_1=\tilde{w}_1=0$, which is 
\[\mathbb{C}^\times\times (T^*(Gr(2,4))\times_{\pi=B''}(\mathbb{C}^2))\]
where $\mathbb{C}^\times$ corresponds to $z_1$ and $B''$ is
\[\begin{pmatrix}0&0&0&0\\0&0&0&0\\0&0&0&0\\0&1+z_2w_2&0&0\end{pmatrix}\]
This is $Core(2;4,2,2,1)$.
\end{example}

Now let us consider the open set $V$. We will take $Z\subset \underset{1\leq i\leq q}{\prod}\tilde{T}_i$ to be the open subset where $1+x_1y_1\neq 0$, and take $\lambda$ to be the map sending $(x_{i,j},y_{i,j})_{1\leq i\leq q,1\leq j\leq r_j}$ to 
\[diag(I_{u_n-r_q},\frac{1}{1+x_{q,1}y_{q,1}},I_{r_q-1})\cdot diag(\begin{pmatrix}1&\alpha^T \\0& I_{r_q-1}\end{pmatrix},I_{u_n-r_q})\cdot diag(I_{u_n-r_q},\begin{pmatrix}1&0\\-\beta&I_{r_q-1}\end{pmatrix})\]
where $\alpha=\frac{1}{1+x_1y_1}\begin{pmatrix}x_1y_2\\\cdots\\x_1y_{r_q} \end{pmatrix}$ and $\beta=\frac{1}{1+x_1y_1}\begin{pmatrix}x_2 y_1\\\cdots\\x_{r_q}y_1\end{pmatrix}$.

After applying Lemma~\ref{category} and picking up another basis of $\tilde{T}_j,j<q$, we know that the open set is isomorphic to 
\[\{(x_{q,1},y_{q,1}):\left|x_{q,1}y_{q,1}\right|<1\} \times Core(k:u_n,\cdots,u_{b_q},u_{b_q-2},u_{b_q-3},\cdots,u_j,u_{j-1},u_{j-1},u_{j-2},\cdots,u_1)\]
where $j$ is the maximal index such that $c_j=2$ and $j<b_q$. Notice that the first component is contractible. So, in the naive homotopy category, we may replace $V$ with its closed subset of $V$ defined by $x_{q,1}=y_{q,1}=0$, which is exactly (not via the isomorphism through Lemma~\ref{category}):
\[ Core(k:u_n,\cdots,u_{b_q},u_{b_q-2},u_{b_q-3},\cdots,u_j,u_{j-1},u_{j-1},u_{j-2},\cdots,u_1)\]

Recall that $\mathbb{T}_1$ is the core torus of $Core(k;u_n,\cdots,u_1)$. Notice that both $x_{q,1}$ and $y_{q,1}$ are eigenvectors of this torus, which means the embedding is $\mathbb{T}_1$-equivariant. So the $\mathbb{T}_1$ equivariant cohomology of $V$ is the same as that of 
\[ Core(k:u_n,\cdots,u_{b_q},u_{b_q-2},u_{b_q-3},\cdots,u_j,u_{j-1},u_{j-1},u_{j-2},\cdots,u_1).\]

Denote the $\mathbb{T}_{core}$ of 
\[Core(k:u_n,\cdots,u_{b_q},u_{b_q-2},u_{b_q-3},\cdots,u_j,u_{j-1},u_{j-1},u_{j-2},\cdots,u_1)\]
as $\mathbb{T}_3$. Once we track the torus action, we will find that there is an identification between $\mathbb{T}_1$ and $\mathbb{T}_3$ such that the actions match. So the  $\mathbb{T}_1$-equivariant cohomology of $V$ is exactly the $\mathbb{T}_3$-equivariant cohomology of 
\[ Core(k:u_n,\cdots,u_{b_q},u_{b_q-2},u_{b_q-3},\cdots,u_j,u_{j-1},u_{j-1},u_{j-2},\cdots,u_1).\]
Moreover, the rank of 
\[ Core(k:u_n,\cdots,u_{b_q},u_{b_q-2},u_{b_q-3},\cdots,u_j,u_{j-1},u_{j-1},u_{j-2},\cdots,u_1)\]
is $rank(k;u_n,\cdots,u_1)$. So we reduce the question to a $2$-row bow variety with smaller rank.

Imitating what we have done for $V$, we can show that $U\cap V$ is naively homotopic to the closed subset of $U\cap 
V$ defined by
\[y_{q,1}=0.\]

That's exactly
\[ Core(k:u_n,\cdots,u_{b_q},u_{b_q-2},u_{b_q-3},\cdots,u_j,u_{j-1},u_{j-1},u_{j-2},\cdots,u_1)\times C^\times.\]
Here, $\mathbb{C}^\times$ represents $x_{q,1}$.

Notice that we can find a codimension $1$ subtorus $\mathbb{T}'_3$ of $\mathbb{T}_3$ such that $\mathbb{T}'_3$ acts trivially on $x_{q,1}$ and we may identify $\mathbb{C}^\times$ as $\mathbb{T}_3/\mathbb{T}'_3$. Still, by \cite[Example~4.4]{AndersonFulton:2023}, the $\mathbb{T}_1$-equivariant cohomology of $U\cap V$ is exactly 
\[ H^*_{\mathbb{T}'_3}(Core(k:u_n,\cdots,u_{b_q},u_{b_q-2},u_{b_q-3},\cdots,u_j,u_{j-1},u_{j-1},u_{j-2},\cdots,u_1))\]

If we know that 
\[H^*_{\mathbb{T}_3}(Core(k:u_n,\cdots,u_{b_q},u_{b_q-2},u_{b_q-3},\cdots,u_j,u_{j-1},u_{j-1},u_{j-2},\cdots,u_1))\]
is concentrated in even degrees, by the following lemma, we know the map from 
\[H^*_{\mathbb{T}_3}(Core(k:u_n,\cdots,u_{b_q},u_{b_q-2},u_{b_q-3},\cdots,u_j,u_{j-1},u_{j-1},u_{j-2},\cdots,u_1))\]
to 
\[H^*_{\mathbb{T}_3}(Core(k:u_n,\cdots,u_{b_q},u_{b_q-2},u_{b_q-3},\cdots,u_j,u_{j-1},u_{j-1},u_{j-2},\cdots,u_1))\]
is surjective in even degrees, i.e. $H^*_{\mathbb{T}_1}(V)\to H^*_{\mathbb{T}_1}(U\cap V)$ is surjective in even degrees.

\begin{lemma}\label{ref-appendix-b}
     Suppose we have a space $X$ and a torus $T\times \mathbb{C}^\times$ acting on $X$ such that $H_{T\times \mathbb{C}^\times}^*(X)$ is concentrated at even degrees. Then the map $H_{T\times \mathbb{C}^\times}^{even}(X)\to H_{T}^{evn}(X)$ is surjective.
\end{lemma}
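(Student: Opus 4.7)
The plan is to realize the restriction $H^{\mathrm{even}}_{T \times \mathbb{C}^\times}(X) \to H^{\mathrm{even}}_T(X)$ as one arrow of a Gysin long exact sequence, so that surjectivity in even degrees follows immediately from the vanishing of its neighbor.

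First I would observe that the $T \times \mathbb{C}^\times$-action on $X$ induces a residual $\mathbb{C}^\times$-action on the Borel construction $X_T = ET \times_T X$, and that the secondary Borel construction satisfies $(X_T)_{\mathbb{C}^\times} \simeq X_{T \times \mathbb{C}^\times}$. Associated to this residual $\mathbb{C}^\times$-action is the principal $\mathbb{C}^\times$-bundle $E\mathbb{C}^\times \times X_T \to X_{T \times \mathbb{C}^\times}$, whose total space is homotopy equivalent to $X_T$ (as $E\mathbb{C}^\times$ is contractible), and whose Euler class is the pullback $u \in H^2_{T \times \mathbb{C}^\times}(X)$ of the generator of $H^2(B\mathbb{C}^\times)$.

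Next I would write down the resulting Gysin long exact sequence
\[ \cdots \to H^{n-2}_{T \times \mathbb{C}^\times}(X) \xrightarrow{\cup u} H^n_{T \times \mathbb{C}^\times}(X) \xrightarrow{\pi^*} H^n_T(X) \to H^{n-1}_{T \times \mathbb{C}^\times}(X) \to \cdots \]
and verify that the middle arrow $\pi^*$ coincides with the restriction map induced by the inclusion $T \hookrightarrow T \times \mathbb{C}^\times$; this is standard, because both maps are the pullback along the fiber of the Borel fibration $X_T \to X_{T \times \mathbb{C}^\times} \to B\mathbb{C}^\times$. Once this identification is in place, the hypothesis that $H^*_{T \times \mathbb{C}^\times}(X)$ vanishes in odd degrees forces $H^{n-1}_{T \times \mathbb{C}^\times}(X) = 0$ whenever $n$ is even, so the segment $H^n_{T \times \mathbb{C}^\times}(X) \xrightarrow{\pi^*} H^n_T(X) \to 0$ is exact, yielding the desired surjectivity.

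The only mildly delicate point is the identification of $\pi^*$ with the restriction map (equivalently, matching the homotopy equivalence between the bundle total space and $X_T$ with the natural maps into $X_{T \times \mathbb{C}^\times}$). After that, no further spectral-sequence or Kirwan-type argument is required.
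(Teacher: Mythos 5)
Your proof is correct, but it takes a genuinely different route from the one in the paper (Appendix~B, by M.~Franz). The paper likewise first passes to $Y=ET\times_T X$ to reduce to a single circle $K$, but it then analyzes the Serre spectral sequence of the Borel fibration $Y\to Y_K\to BK$ (fiber $Y$, base $BK$): an inductive lemma shows that multiplication by powers of $t$ is an isomorphism on suitable entries of the pages, and surjectivity of the edge homomorphism is obtained by contradiction, writing a hypothetical nonzero differential out of an even row as $d_{2r}(a)=t^r b$ with $b$ of odd degree and showing $b$ must support a later differential, contradicting $d\circ d=0$. You instead use the other fibration attached to the same situation: the principal circle bundle $E\mathbb{C}^\times\times X_T\to X_{T\times\mathbb{C}^\times}$, whose Serre spectral sequence has only two rows and collapses to the Gysin long exact sequence; the restriction map is its $\pi^*$ (your identification is the standard one — under $X_T\simeq E(T\times\mathbb{C}^\times)\times_T X$ the bundle projection is exactly the map inducing restriction along $T\hookrightarrow T\times\mathbb{C}^\times$), so surjectivity in even degree $n$ falls out of exactness once $H^{n-1}_{T\times\mathbb{C}^\times}(X)=0$. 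Your argument is shorter, works over the same general coefficient ring, and is sharper degree by degree: surjectivity in degree $n$ needs only the vanishing of $H^{n-1}_{T\times\mathbb{C}^\times}(X)$, whereas the paper's contradiction argument invokes evenness in a range of degrees. What the paper's longer argument additionally records is the structural information about the spectral sequence of the Borel fibration contained in its intermediate lemma, which your approach does not reproduce (and does not need).
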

\begin{proof}
See Appendix~\ref{appendix2}.
\end{proof}
Now we are ready to prove Theorem~\ref{secondcal}.
\begin{proof}
    We are going to prove the theorem by induction on the rank.

    When the rank is $0$, we can just use methods in Theorem~\ref{firstcal} to prove the result.

    Suppose the rank is $M>0$ and we have proved the theorem for $2$-row bow varieties of strictly smaller rank.

    Now fix a $2$-row bow variety $Bow$ and use $\mathbb{T}_{core}$ to represent the core torus of it.
    
    We use the open cover constructed before. Notice that by induction we know that both $H^*_{\mathbb{T}_{core}}(U)$ and $H^*_{\mathbb{T}_{core}}(V)$ are concentrated in even degree. It suffices to show the map
    \[H^*_{\mathbb{T}_{core}}(V)\oplus H^*_{\mathbb{T}_{core}}(U)\to H^*_{\mathbb{T}_{core}}(U\cap V)\] is surjective in even degrees. 
    From the argument before the proof, we know that \[H^*_{\mathbb{T}_{core}}(V)\to H^*_{\mathbb{T}_{core}}(U\cap V)\] is already surjective in even degrees. So we have proved the theorem.
    
\end{proof}
\section*{Acknowledgement}
The author would like to express their heartfelt gratitude to Victor Ginzburg and Hiraku Nakajima for their invaluable and detailed responses to the author's inquiries. Additionally, the author extends their sincere thanks to Joel Kamnitzer and Rich\'ard Rim\'anyi for their warm hospitality and enlightening discussions. Special appreciation goes to Matthias Franz for generously offering to contribute an appendix to this paper. Furthermore, the author wishes to acknowledge his advisor, Allen Knutson, for introducing him to bow varieties and for the abundance of helpful discussions, explanations, suggestions, and insightful comments provided throughout the course of this work.

\appendix
\section{Strong deformation retraction}\label{appendix1}
The tricks involved this section are motivated by \cite[Section~4.3]{Slodowy:1980}. We are going to prove a general strong deformation retraction result in the first subsection and then apply it to a special case which will be used frequently in \S~\ref{section4}.
\subsection{General cases}
Let \( X \) be a smooth quasi-projective complex variety, and let \( f : X \to \mathbb{R}^{\geq 0} \) be an analytic function. Suppose there is an action of \( \mathbb{R}^+ \) on \( X \) such that
\[
f(tx) = t^\omega f(x), \quad \text{for all } t \in \mathbb{R}^+ \text{ and } x \in X,
\]
where \( \omega \in \mathbb{Z}^{>0} \).

Notice that $f^{-1}(0)$ is an analytic subset of $X$. By \cite[Section~3~Theorem~2]{Lojasiewicz:1964}, we know that there is a triangulation of the pair $(X,f^{-1}(0))$. Then, by the standard result from algebraic topology (\cite[Page~124]{Spanier:1989}), we know there is an open tubular neighbourhood $U$ of $f^{-1}(0)$ such that the embedding gives a homotopy equivalence.

Suppose that we have a positive continuous function $g$ on $X$ which is invariant under the $\mathbb{R}^+$-action. 

Let's define:
\[h(x):=\max\{t\in [0,1]:g(x)=g(tx)\geq f(tx)=t^\omega f(x)\}.\]

\begin{lemma}
    The function $h$ is continuous.
\end{lemma}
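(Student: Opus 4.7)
The plan is to reduce the definition to an explicit formula and then check continuity case-by-case. Since $g$ is $\mathbb{R}^+$-invariant, the equality $g(x) = g(tx)$ is automatic, and the equality $f(tx) = t^\omega f(x)$ is just the homogeneity hypothesis. Hence the set in the definition collapses to
\[
\{t \in [0,1] : g(x) \geq t^\omega f(x)\}.
\]
I would split on whether $f(x)$ vanishes. If $f(x) = 0$, the inequality $g(x) \geq 0$ is automatic (since $g > 0$), so $h(x) = 1$. If $f(x) > 0$, the inequality is equivalent to $t \leq (g(x)/f(x))^{1/\omega}$, hence
\[
h(x) = \min\!\left(1,\ \bigl(g(x)/f(x)\bigr)^{1/\omega}\right).
\]

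With this closed form, continuity is immediate on the open set $\{f > 0\}$: both $f$ and $g$ are continuous and positive there, so the expression inside $\min$ is continuous, and $\min$ preserves continuity. The only subtlety is at a point $x_0$ with $f(x_0) = 0$, where $h(x_0) = 1$. For this I would use that $g$ is continuous and strictly positive, hence bounded below by some $c > 0$ on a neighborhood of $x_0$, while $f(y) \to 0$ as $y \to x_0$. For any $y$ in such a neighborhood with $f(y) > 0$, we get $g(y)/f(y) \geq c/f(y)$, which tends to $\infty$; in particular $(g(y)/f(y))^{1/\omega} \geq 1$ once $f(y)$ is sufficiently small, so $h(y) = 1$. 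For $y$ with $f(y) = 0$ we already have $h(y) = 1$. Thus $h \equiv 1$ on a neighborhood of $x_0$, proving continuity there.

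There is no real obstacle: the entire content is in recognizing that the two equalities in the definition are automatic, reducing $h$ to $\min(1,(g/f)^{1/\omega})$, and handling the locus $\{f = 0\}$ by the positivity of $g$. The result would actually hold more generally for any continuous $g \geq 0$ as long as $g > 0$ on $\{f = 0\}$, but the positivity assumption on $g$ stated in the setup makes even this cleaner.
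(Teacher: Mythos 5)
Your proof is correct and follows essentially the same route as the paper: reduce $h$ to the closed form $\min(1,(g/f)^{1/\omega})$ on $\{f>0\}$, and show $h\equiv 1$ on a neighborhood of any point of $\{f=0\}$ using the strict positivity and continuity of $g$ together with the continuity of $f$. No gaps.
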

\begin{proof}
    Notice that when $f(x)>0$, we have \[h(x)=\min\{1,(g(x)/f(x))^{1/\omega}\}\] which is the minimum of two continuous functions. So we know that $h$ is continuous on the open set $f^{-1}((0,+\infty))$.

    Given $x\in f^{-1}(0)$, by the continuity of $f$ and $g$, we know there exists a neighborhood $V$ of $x$, such that, for any $y\in V$, we have \[g(y)>\frac{g(x)}{2}>f(y).\] So we know that $h(y)=1$ on this neighborhood $V$. Hence we know $h$ is continuous around $x$. 
    The choice of $x$ is arbitrary. In all, we learn that $h$ is continuous on $X$.
\end{proof}

Now define the following map:
\[\theta(t,x):=\begin{cases}tx,&t>h(x)\\ h(x)x,&t\leq h(x) \end{cases},t\in [0,1],x\in X\]

\begin{lemma}
    $\theta $ is a strong deformation retract from $X$ to $Z_g:=\{x\in X: h(x)=1\}=\{x\in X:g(x)\geq f(x)\}$
\end{lemma}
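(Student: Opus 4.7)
The plan is to verify the four defining properties of a strong deformation retraction from $X$ onto $Z_g$ via the homotopy $\theta$: (i) continuity of $\theta$ on $[0,1]\times X$; (ii) $\theta(1,x)=x$ for all $x\in X$; (iii) $\theta(0,x)\in Z_g$ for all $x\in X$; and (iv) $\theta(t,z)=z$ for all $z\in Z_g$ and $t\in[0,1]$. Along the way, one should note that the two descriptions of $Z_g$ agree: $h(x)=1$ if and only if $1$ lies in $\{t\in[0,1]:g(x)\geq t^\omega f(x)\}$, which happens exactly when $g(x)\geq f(x)$.

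Properties (ii) and (iv) are essentially tautological given how $\theta$ was built. Since $h(x)\leq 1$ always, the value $t=1$ falls into one of the two branches and both return $x$ (the first gives $1\cdot x=x$ when $1>h(x)$, the second gives $h(x)x=x$ when $1=h(x)$). For (iv), $z\in Z_g$ forces $h(z)=1$, so $t\leq 1=h(z)$ for every $t\in[0,1]$ and therefore $\theta(t,z)=h(z)z=z$. For (i), the two formulas $(t,x)\mapsto tx$ and $(t,x)\mapsto h(x)x$ are each jointly continuous using continuity of the $\mathbb{R}^+$-action and of $h$ (the preceding lemma), and they coincide on the common locus $t=h(x)$, where both equal $h(x)x$. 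The pasting lemma applied to the closed subsets $\{t\geq h(x)\}$ and $\{t\leq h(x)\}$ of $[0,1]\times X$ (closed by continuity of $h$) then delivers continuity of $\theta$.

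The substantive point is (iii). Since $h(x)\geq 0$, the formula $\theta(0,x)=h(x)x$ applies, and one must check $h(x)x\in Z_g$, i.e.\ $g(h(x)x)\geq f(h(x)x)$. The $\mathbb{R}^+$-invariance of $g$ gives $g(h(x)x)=g(x)$, and the $\omega$-homogeneity of $f$ gives $f(h(x)x)=h(x)^\omega f(x)$, so the required inequality becomes $g(x)\geq h(x)^\omega f(x)$, which is precisely the condition that $h(x)$ itself belongs to the set $\{t\in[0,1]:g(x)\geq t^\omega f(x)\}$ whose maximum defines $h(x)$. This set is nonempty (it contains $0$, since $g>0$ and $f\geq 0$) and closed in $[0,1]$ by continuity, so the maximum is attained and the condition holds. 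The only real obstacle is confirming the gluing in (i) at points where $t=h(x)$, but this is immediate from the pasting lemma once the two branches are seen to match and to be defined on closed subsets; everything else follows by unwinding the construction of $h$.
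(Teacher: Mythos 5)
Your proof is correct and follows essentially the same route as the paper: the only substantive issue is continuity of $\theta$ across the locus $t=h(x)$, which the paper handles by splitting sequences into the two branches and you handle equivalently via the pasting lemma on the closed sets $\{t\geq h(x)\}$ and $\{t\leq h(x)\}$. Your explicit verification of property (iii) (that $\theta(0,x)\in Z_g$, using the invariance of $g$, the homogeneity of $f$, and the fact that the maximum defining $h(x)$ is attained) is a welcome addition, since the paper dismisses everything besides continuity as ``naturally satisfied.''
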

\begin{proof}
    In fact, it suffices to show $\theta$ is continuous. All other conditions are naturally satisfied.

    Notice that $\theta$ is continuous on $\{(t,x):t>h(x)\}\cup \{(t,x):t<h(x)\}$ since $h$ and the $\mathbb{R}^+$ action is continuous. It suffices to show that for any $x\in X$, $\theta$ is continuous around $(h(x),x)$.

    Suppose that we have a sequence $(t_i,x_i)$ converging to $(h(x),x)$. We may divide it into two subsequences converging to $(h(x),x)$: $(t_{i,1},x_{i,1})$ and $(t_{i,2},x_{i,2})$ where the first sequence is in $\{(t,x):t>h(x)\}$ and the second sequence is in $\{(t,x):t\leq h(x)\}$. It reduces to show that both $\theta(t_{i,1},x_{i,1})$ and $\theta(t_{i,2},x_{i,2})$ converge to $\theta(h(x),x)=h(x)x$.

    Notice that $\theta(t_{i,1},x_{i,1})=t_{i,1}x_{i,1}$. Since the $\mathbb{R}^+$ action is continuous,  it converges to $h(x)x$.

    Notice that $\theta(t_{i,2},x_{i,2})=h(x_{i,2})x_{i,2}$. Since both $h$ and the $\mathbb{R}^+$ action is continuous, we know that this sequence converges to $h(x)x$.

    In all, we have shown $\theta$ is a strong deformation retract.
\end{proof}

Using the above $\theta$, we can show that the embedding of $Z_g$ into $X$ is a homotopy equivalence.

From now on, assume that $Z_g$ is a subset of $U$. 

Notice that the embedding $f^{-1}(0)\to U$ is a homotopy equivalence factoring through $Z_g$. So we know that
$\pi_i(f^{-1}(0))\to \pi_i(U)$ are isomorphisms and $\pi_i(f^{-1}(0))\to \pi_i(Z_g)$ are injections for all $i\geq 0$.

Notice that the embedding $Z_g\to X$ is a homotopy equivalence factoring through $U$. So we know that
$\pi_i(Z_g)\to \pi_i(X)$ are isomorphisms and $\pi_i(U)\to \pi_i(X)$ are surjections for all $i\geq 0$.

Notice that the map $\pi_i(f^{-1}(0))\to \pi_i(X)$ factors as 
$\pi_i(f^{-1}(0))\to\pi_i(Z_g)\to  \pi_i(X)$ (respectively $\pi_i(f^{-1}(0))\to\pi_i(U)\to  \pi_i(X)$) for all $i\geq 0$. So we know that they are surjective (respectively injective) for all $i\geq 0$. 

In all,  we have shown that the embedding $f^{-1}(0)\to X$ induces isomorphisms on $\pi_i,i\geq 0$. As we stated in the beginning of this subsection, we know both of them are CW complexes. So by the Whitehead theorem (\cite{Whitehead:1949}), the embedding is a homotopy equivalence.
\subsection{A special case}
Now let us focus on a case which will be used frequently for us.

Suppose we have a smooth complex quasi-projective variety $X$, an $\mathbb{R}^+$ action on $X$ and an equivariant proper morphism:
\[\pi:X\to Y_1\times Y_2\]
where $Y_1$ is a smooth variety with a trivial $\mathbb{R}^+$ action  and $Y_2=\mathbb{C}^m$ is equipped with a linear $\mathbb{R}^+$ action such that each standard basis is an eigenvector of positive weight.

Suppose that $e_i$ is an eigenvector of positive weight $\omega_i$ for $1\leq i\leq m$.  Let us take $\omega$ to be $\sum_{1\leq i\leq m} \omega_i$ and $x_i$ to be the corresponding monomial in coordinate functions of $e_i$ (if we view $Y_2$ as a vector space, this is just the function in $Y_2^*$ telling the coefficient of the $e_i$) . Consider the following  analytic function on $Y_2$:
\[\tilde{f}(x):=\underset{1\leq i\leq m}{\sum}\norm{x_i}^{\frac{2\omega}{\omega_i}}\]

Notice that we may view $x_i\circ pr_{Y_2}\circ \pi$ as an analytic function on $X$. So $f:=\tilde{f}\circ pr_{Y_2}\circ \pi$ can also be viewed as an analytic function on $X$. Here $pr_{Y_2}$ represents the standard projection from $Y_1\times Y_2$ to $Y_2$.

Notice that we have $f(tx)=t^\omega f(x),x\in X, t\in \mathbb{R}^+$.

Now let us construct the $g$. 

Suppose we are given a compact subset $V\in Y_2$. We are going to find a positive number $b_V$ such that 
\[\{x: f(x)\leq b_V,\pi(x)\in V\times Y_2 \}\subset U.\]

When $\pi^{-1}(V\times Y_2)\subset U$, we can just take $b_V$ to be any positive number. When $\pi^{-1}(V\times Y_2)$ is not a subset of $U$, we can pick up an element $x_V\in U^{c}\cap \pi^{-1}(V\times Y_2)$. Denote $f(x_V)$ by $\lambda$.

Consider the following maps:
\[ \pi|_{V}:\pi^{-1}(V\times Y_2)\to V\times Y_2;\]\[pr_{V}:V\times Y_2\to Y_2;\]
\[ f_V:=\tilde{f}\circ pr_{V}\circ \pi_{V}: \pi^{-1}(V\times Y_2)\to \mathbb{R}^{\geq 0}\]
By the definition, $\tilde{f}$ is proper. Since $V$ is compact, $pr_{V}$ is proper. Notice that, as a restriction of a proper map $\pi$ to a closed subset, $\pi_V$ is also proper. So we know that, as a composition of proper maps,  $f_V$ is a proper map.

Hence $f_V^{-1}([0,\lambda])$ is a compact set. So $U^c\cap f_V^{-1}([0,\lambda])$, denoted as $K_V$, as a closed subset of compact set, is still compact. Since $x_V\in K_V$, we know that $U^c\cap f_V^{-1}([0,\lambda])$ is not empty. Now consider the restriction of $f$ on this nonempty compact set. Then there exists $b_V\geq 0, y_V\in K_V$ such that
\[a_V=\min\{f(x):x\in K_V\}=f(y_V)\]

Since $y_V\in K_V\in U^c$ and $f^{-1}(0)\subset U$, we know that $a_V=f(y_V)>0$.

By the choice of $a_V$, we know that for any $x\in X$ such that $f(x)<a_V$ and $\pi(x)\in V\times Y_2$, we have $x\not\in K_V$, i.e. $x\in U$.
So we can just take $b_V$ to be $\frac{a_V}{2}$.

Since $Y_1$ is a smooth variety, thus a manifold, we can take a locally finite cover $(V_i)_{i\in I}$ of it. In fact, we may further assume the closure $\overline{V_i} $ is compact. Then by the above argument, there exists a number $b_i$ such that
\[ \{x: f(x)\leq b_i,\pi(x)\in \overline{V_i}\times Y_2 \}\subset U.\] Hence we know that 
\[ \{x: f(x)\leq b_i,\pi(x)\in V_i\times Y_2 \}\subset U.\]

Now let us take a partition of unity $(g_i)_{i\in I}$ adapted to this open cover $(V_i)_{i\in I}$ and take $\tilde{g}$ to be
\[\underset{i\in I}{\sum} b_ig_i.\]

By the local finiteness, we know that the above sum makes sense and is continuous. Like $\tilde{f}$, we may pull it back to a positive continuous function $g$ on $X$. Since the $\mathbb{R}^+$ action is trivial, we know that $g$ is $\mathbb{R}^+$-invariant. 

\begin{lemma}
    \[\{x\in X: f(x)\leq g(x) \}\subset U\]
\end{lemma}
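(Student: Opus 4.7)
The plan is to chase definitions: unfold $g(x)$ as a convex combination of the scalars $b_i$, pick an index that witnesses the maximum, and then invoke the key property the $b_i$ were manufactured to have.

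More concretely, fix $x \in X$ with $f(x) \leq g(x)$, and let $y_1 := pr_{Y_1}(\pi(x)) \in Y_1$. By construction, $g(x) = \sum_{i \in I} b_i \, g_i(y_1)$, where only finitely many terms are nonzero by local finiteness, and $\sum_{i \in I} g_i(y_1) = 1$ by the partition-of-unity property. Hence $g(x)$ is a convex combination of the values $\{b_i : g_i(y_1) > 0\}$, so there exists $i_0 \in I$ with $g_{i_0}(y_1) > 0$ and $g(x) \leq b_{i_0}$. The first inequality forces $y_1 \in V_{i_0}$, since $g_{i_0}$ is supported in $V_{i_0}$. Then $\pi(x) \in V_{i_0} \times Y_2$, and $f(x) \leq g(x) \leq b_{i_0}$, so by the defining property of $b_{i_0}$ (namely $\{x : f(x) \leq b_{i_0},\ \pi(x) \in \overline{V_{i_0}} \times Y_2\} \subset U$, which implies the same statement with $V_{i_0}$ in place of $\overline{V_{i_0}}$), we conclude $x \in U$.

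The argument is essentially formal once the $b_i$ are in hand, so there is no real obstacle — the ``hard work'' was done earlier in constructing the $b_i$ via properness of $f_V$ on $\pi^{-1}(\overline{V_i} \times Y_2)$. The only thing to keep straight is that $g$, although built from a partition of unity on $Y_1$, is genuinely pulled back to $X$ by $pr_{Y_1} \circ \pi$, so its value at $x$ only depends on $y_1$, which is exactly what makes the convex-combination step go through.
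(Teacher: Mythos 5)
Your proof is correct and follows essentially the same route as the paper's: both bound $g(x)=\sum_i b_i g_i(y_1)$ by some $b_{i_0}$ with $g_{i_0}(y_1)>0$ (you via the convex-combination observation, the paper by sorting the finitely many relevant $b_{i_j}$ and taking the largest), then conclude $y_1\in V_{i_0}$ and invoke the defining property of $b_{i_0}$. No gaps.
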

\begin{proof}
    Suppose we are given $x\in X$ such that $f(x)\leq g(x) $. Consider the natural projection \[pr_{Y_1}:Y_1\times Y_2.\]
    The point $\tilde{x}:=pr_{Y_1}\circ \pi(x)$ is an element in $X$. By the local finiteness, we know that there exists finitely many indices in $I$:
    \[i_1,\cdots,i_n\] such that $V_{i_j},1\leq j\leq n$ are all the open sets in the open cover containing $\tilde{x}$. 

    Without loss of generality, we may assume that $b_{i_1}\geq \cdots\geq b_{i_n}$. Then we know that
    \[g(x)=\tilde{g}(\tilde{x})=\underset{1\leq j\leq n}{\sum} b_{i_j}g_{i_j}(\tilde{x})\leq b_{i_1}\underset{1\leq j\leq n}{\sum} g_{i_j}(\tilde{x})=b_{i_1}.\]

    Since $\tilde{x}\in V_{i_1}$, we know that $x\in \pi^{-1}(V_{i_1}\times Y_2)$. Since $f(x)\leq g(x)$ and $g(x)\leq b_{i_1}$, we know that $f(x)\leq b_{i_1}$. Then, by the definition of $b_{i_1}$, we have $x\in U$. Hence we have finished the proof.
\end{proof}

Equipped with all the facts we have shown, we know the datum \[(X,f,g,\mathbb{R}^+\text{ action})\] satisfies all the conditions in the previous subsection. So we know that the embedding $f^{-1}(0)\to X$ is a homotopy equivalence. In all, we can summarize it as the following theorem

\begin{theorem}\label{deformation}
    Suppose we have a smooth complex quasi-projective variety $X$, an $\mathbb{R}^+$ action on $X$ and an equivariant proper morphism:
\[\pi:X\to Y_1\times Y_2\]
where $Y_1$ is a smooth variety with a trivial $\mathbb{R}^+$-action  and $Y_2=\mathbb{C}^m$ is equipped with an $\mathbb{R}^+$-action such that each standard basis is an eigenvector of positive weight.

Then the embedding $\pi^{-1}(Y_1\times \{0\})\to X$ is a homotopy equivalence.
\end{theorem}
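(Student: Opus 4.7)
The plan is to assemble the pieces already established in the preceding subsection rather than introduce anything new. First I would observe that since $\tilde{f}$ on $Y_2=\mathbb{C}^m$ vanishes exactly at the origin (every $\|x_i\|^{2\omega/\omega_i}$ being nonnegative), and $\pi$ is continuous, the zero locus $f^{-1}(0)$ is precisely $\pi^{-1}(Y_1\times\{0\})$. So the statement reduces to showing the inclusion $f^{-1}(0)\hookrightarrow X$ is a homotopy equivalence.

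Next I would note that the hypotheses of the general setup from the previous subsection are met: the variety $X$ is smooth quasi-projective, $f$ is analytic with $f(tx)=t^\omega f(x)$ for $\omega>0$, and the preceding construction produces a positive continuous $\mathbb{R}^+$-invariant function $g$ with the key property $\{x\in X:f(x)\leq g(x)\}\subset U$, where $U$ is an open tubular neighborhood of $f^{-1}(0)$ in which the inclusion $f^{-1}(0)\hookrightarrow U$ is a homotopy equivalence (this last fact coming from the triangulability of the pair $(X,f^{-1}(0))$ via \cite{ASNSP_1964_3_18_4_449_0}).

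Then I would invoke the strong deformation retraction $\theta:[0,1]\times X\to X$ from $X$ onto $Z_g=\{f\leq g\}$ constructed in the general case; this was already proven to be continuous using the continuity of $h(x)=\min\{1,(g(x)/f(x))^{1/\omega}\}$ away from $f^{-1}(0)$ and a local constancy argument on $f^{-1}(0)$. In particular the inclusion $Z_g\hookrightarrow X$ is a homotopy equivalence. I would then use the sandwich $f^{-1}(0)\subset Z_g\subset U\subset X$: the outer inclusion $f^{-1}(0)\hookrightarrow U$ is a homotopy equivalence, and so is $Z_g\hookrightarrow X$. Running these through the associated maps on homotopy groups gives that $\pi_i(f^{-1}(0))\to\pi_i(X)$ is both surjective (factoring through $Z_g$) and injective (factoring through $U$), hence an isomorphism for all $i\geq 0$.

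The hardest step conceptually is really the one already carried out in the construction of $g$, namely using properness of $\pi$ together with local finiteness of a chart cover of $Y_1$ to produce a positive continuous function whose sublevel set is trapped inside the fixed tubular neighborhood $U$; given $g$, everything else is formal. To finish, I would observe that $X$ and $f^{-1}(0)$ are both CW complexes (again via the triangulation result of \cite{ASNSP_1964_3_18_4_449_0}), so Whitehead's theorem \cite{bams/1183513543} upgrades the weak equivalence to an honest homotopy equivalence, which gives an isomorphism in the naive homotopy category as claimed.
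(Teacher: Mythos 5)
Your proposal is correct and follows essentially the same route as the paper: the paper's proof of Theorem~\ref{deformation} is precisely the assembly of the general retraction result (the functions $f$, $g$, $h$, the retraction $\theta$ onto $Z_g$, the sandwich $f^{-1}(0)\subset Z_g\subset U\subset X$, and Whitehead's theorem) with the partition-of-unity construction of $g$ from the properness of $\pi$. Your identification of $f^{-1}(0)$ with $\pi^{-1}(Y_1\times\{0\})$ and of the construction of $g$ as the substantive step matches the paper exactly.
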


\begin{remark}\label{alternation}
    In fact, the smoothness of $X$ is not necessary. All we need is that there is a triangulation of the pair $(X,\pi^{-1}(Y_1\times \{0\}))$. Since $X$ is quasi-projective, $X$ can be viewed as an analytic closed subset of a countable analytic manifold $Z$ (some open subset of $\mathbb{A}^k\times \mathbb{P}^l$). Then, still by \cite[Section~3~Theorem~2]{Lojasiewicz:1964}, there exists a triangulation for the triple $(Z,X,\pi^{-1}(Y_1\times \{0\}))$.
\end{remark}
  
\section{Generalized Mirkovic-Vybornov isomorphisims for bow varieties}\label{MVyiso}
Mirkovic-Vybornov isomorphisims were introduced in \cite{MirkovićVybornov:2008}, relating Nakajima quiver varieties, slices in cotangent bundles of flag varieties and slices in affine grassmannians. In \cite{BravermanFinkelbergNakajima:2018}, generalzied affine Grassmannian slices were introduced as Coulomb branches and bow varieties were proved to be Coulomb branches, using factorization properties in \cite{NakajimaTakayama:2017}. In this paper, we have realized bow varieties as slices in cotangent bundles of flag varieties, which is in the favor of classical Mirkovic-Vybornov isomorphisims. We will extend the classical one to the case of bow varieties by giving an explicit isomorphism between slices in cotangent bundles of flag varieties and generalzied affine Grassmannian slices. 
\subsection{Description of generalzied affine grassmannian slices}
Suppose that $\mu=(\mu_1,\cdots,\mu_n)$ where $\mu_i\geq 1$.  

\begin{definition}
The generalized affine grassmannian slice $W_{\vec{\mu}}$ (\cite{KamnitzerPhamWeekes:2022}) is the space with the following two equivalent definitions
\begin{itemize}
    \item 
    
    \[\left\{A=(a_{i,j})_{1\leq i,j\leq n}\in Mat_{n\times n}(\mathbb{C}[x]): \parbox{0.5\columnwidth}{
The southeast $i\times i$  minor is a monic polynomial of degree $N_{n-i+1}$.\\ Take $S=\{1\leq k\leq n:k>i,k>j\}$. The $(\{i\}\cup S)\times (\{j\}\cup S)$ minor is of degree strictly smaller than $N_{\max\{i,j\}}$. }\right\}\]
where $N_i:=\underset{j\geq i}{\sum} \mu_j$.
    \item \[Ux^{\vec{\mu}}T V \cap Mat_{n\times n}(\mathbb{C}[x])\]
    where $U$ is upper-triangular unipotent group whose nontrivial coefficients are in $x^{-1}\mathbb{C}[\![X^{-1}]\!]$, $T$ is the group of diagonal matrices whose coefficients are in $1+x^{-1}\mathbb{C}[\![X^{-1}]\!]$, and  $V$ is the transpose of $U$.
\end{itemize}
\end{definition}

Interested readers may prove it by themselves that $W_{\vec{\mu}}$ is actually an affine space.

Recall that we also have an affine space in a good bow variety determined by two vectors $(\lambda_1,\cdots,\lambda_m)$ and $(\mu_1,\cdots,\mu_n)$ , the slice $S_{\vec{\mu}}$ (\ref{generalslice}). And it's not difficult to show that it's of the same dimension as $W_{\vec{\mu}}$. So there are plenty of isomorphisms between $S_{\vec{\mu}}$ and $W_{\vec{\mu}}$. But we want to construct a specefic isomorphism $\textbf{MVy}$ satisfying the following property:

\begin{itemize}
    \item Denote the lattice \(\mathbb{C}[x]^n\) by \(\mathcal{L}\).
    \begin{itemize}
        \item Given a matrix \(A \in W_{\vec{\mu}}\), denote the lattice generated by the columns of \(A\) as \(\mathcal{L}_A\).
        \item The isomorphism should match:
        \begin{itemize}
            \item The Jordan type of multiplication by \(x\) on \(\mathcal{L}/\mathcal{L}_A\).
            \item That of \(\textbf{MVy}(A)\).
        \end{itemize}
    \end{itemize}
\end{itemize}

To do this, we will find a basis for each $A \in W_{\vec{\mu}}$ and $\textbf{MVy}$ is just representing multiplication by $x$ on \(\mathcal{L}/\mathcal{L}_A\) with respect to this basis. In the next subsection, we will give an explicit construction of this basis and formulas to calculate the MVy isomorphism

Suppose we have this isomorphism of this form. Then it will sends $W_{\vec{\mu}} \cap \pi_{\lambda}(T^*(Fl_{\vec{\lambda}})$ to $S_{\mu}\cap \pi_{\lambda}(T^*(Fl_{\vec{\lambda}})$ where the former is the definition of generalized affine Grassmannian slices and the latter is the affinization of bow varieties. The smooth version is just of the similar form in Springer coordinates. 
\subsection{Explicit calculation of Mirkovic-Vybornov isomorphisims}

\  Recall the two desriptions of $W_{\vec{\mu}}$ introduced the first subsection
\begin{itemize}
    \item 
    
    \[\left\{A=(a_{i,j})_{1\leq i,j\leq n}\in Mat_{n\times n}(\mathbb{C}[x]): \parbox{0.5\columnwidth}{
The southeast $i\times i$  minor is a monic polynomial of degree $N_{n-i+1}$.\\ Take $S=\{1\leq k\leq n:k>i,k>j\}$. The $(\{i\}\cup S)\times (\{j\}\cup S)$ minor is of degree strictly smaller than $N_{\max\{i,j\}}$. }\right\}\]
where $N_i:=\underset{j\geq i}{\sum} \mu_j$.
    \item \[Ux^{\vec{\mu}}T V \cap Mat_{n\times n}(\mathbb{C}[x])\]
    where $U$ is upper-triangular unipotent group whose nontrivial coefficients are in $x^{-1}\mathbb{C}[\![X^{-1}]\!]$, $T$ is the group of diagonal matrices whose coefficients are in $1+x^{-1}\mathbb{C}[\![X^{-1}]\!]$, and  $V$ is the transpose of $U$.
\end{itemize}

Suppose that \( a_{i,j} = -\sum_{l \geq 0} a_{i,j}^{(l)}x^l \). Let us introduce the following notations:
\begin{itemize}
    
    \item For \( b_{n,n}^{(l)} \):
    \begin{itemize}
        \item Set \( b_{n,n}^{(l)} := 0 \) for \( l \geq \mu_n \) and \( b_{n,n}^{(l)} := a_{n,n}^{(l)} \) for \( l < \mu_n \).
    \end{itemize}

    \item For \( n \geq i \geq j \geq 1 \) and \( 0 \leq l \leq \mu_i - 1 \):
    \begin{itemize}
        \item Define \( b_{i,j}^{(l)} := a_{i,j}^{(l)} + \sum_{k > i} \left( \sum_{p+q = \mu_k + l} b_{k,i}^{(p)}b_{i,k}^{(q)} \right) \).
    \end{itemize}

    \item For \( n \geq i > j \geq 1 \) and \( 0 \leq l \leq \mu_i - 1 \):
    \begin{itemize}
        \item Define \( b_{j,i}^{(l)} := a_{j,i}^{(l)} + \sum_{i \leq k \leq n} \left( \sum_{p+q = \mu_k + l} b_{k,i}^{(p)}b_{j,k}^{(q)} \right) \).
    \end{itemize}

    \item For \( 1 \leq i,j \leq n \), \( k = \max(i,j) \), and \( l \geq \mu_k \):
    \begin{itemize}
        \item Define \( c_{i,j}^{(l)} := \delta_{i,j}\delta_{l,\mu_k} + \sum_{s > k} \left( \sum_{p+q = \mu_s + l} b_{s,j}^{(p)}b_{i,s}^{(q)} \right) \),
        where \( \delta_{*,*} \) is the Kronecker symbol.
    \end{itemize}

    \item For \( 1 \leq i,j \leq n \):
    \begin{itemize}
        \item Define \( b_{i,j} = -\sum_{0 \leq l \leq \mu_{\max(i,j)}} b_{i,j}^{(l)} x^l \).
    \end{itemize}

\end{itemize}

Notice that the definition makes sense because it inducts on the following order
\[x^{(k)}_{i,j}> x^{(l)}_{i',j'}\]
if and only if  one of the following two cases hold
\begin{itemize}
    \item $i\leq i'$, $j\leq j' $ and $i+j<i'+j'$ 
    \item $ i=i'$, $j=j'$, and $k<l$.
\end{itemize}

Now we have the following theorem

\begin{theorem}\label{bijectioncoef}
Suppose that $U=(u_{i,j})_{1\leq i,j\leq n}$ and $V=(v_{i,j})_{1\leq i,j\leq n}$. Then we have that 
\[b_{i,j}=\begin{cases}[x^{\mu_j}u_{i,j}],&i<j\\ [\lambda_iv_{i,j}],&i> j\\ [\lambda_{i}-x^{\mu_i}],&i=j\end{cases} \]
and 
\[-b_{i,j}=\begin{cases}[-a_{i,j}+\underset{k\geq j}{\sum}\frac{b_{k,j}b_{i,k}}{x^{\mu_k}}],&i<j\\ [\delta_{i,j}x^{\mu_{i}}-a_{i,j}+\underset{k> j}{\sum}\frac{b_{k,j}b_{i,k}}{x^{\mu_k}}],&i\geq j\end{cases} \]
where $[y]_k$ represent the unique element in $x^k\mathbb{C}[x]\cap (y+x^{k-1}\mathbb{C}[\![x^{-1}]\!])$ for $y\in \mathbb{C}(\!(x^{-1})\!)$. 
\end{theorem}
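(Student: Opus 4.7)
My plan is to derive both characterizations of $b_{i,j}$ from the unique factorization $A = UDV$ provided by the second description of $W_{\vec{\mu}}$. Writing $D = \mathrm{diag}(\lambda_1,\ldots,\lambda_n)$ with $\lambda_k \in x^{\mu_k} + x^{\mu_k-1}\mathbb{C}[\![x^{-1}]\!]$, the entrywise expansion
\[
a_{i,j} = \sum_k u_{i,k}\,\lambda_k\,v_{k,j}
\]
is the starting point. I introduce the auxiliary quantities $B_{i,k} := x^{\mu_k} u_{i,k}$ (for $i<k$) and $B_{k,j} := \lambda_k v_{k,j}$ (for $k>j$); for every $k > \max(i,j)$ the summand rewrites uniformly as $B_{i,k}B_{k,j}/x^{\mu_k}$. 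The polynomial parts $[B_{i,k}]$ and $[B_{k,j}]$ are precisely the candidate expressions $[x^{\mu_j}u_{i,j}]$ and $[\lambda_i v_{i,j}]$ of the first claim, while the diagonal case $[\lambda_i - x^{\mu_i}]$ is handled via the splitting $\lambda_k = x^{\mu_k} + b_{k,k} + (\text{non-polynomial part})$.

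The key technical lemma is an error-vanishing observation: for $k > \max(i,j)$,
\[
[B_{i,k}B_{k,j}/x^{\mu_k}] = [b_{i,k}b_{k,j}/x^{\mu_k}].
\]
This follows because $B_{i,k} - b_{i,k}$ and $B_{k,j} - b_{k,j}$ both lie in $x^{-1}\mathbb{C}[\![x^{-1}]\!]$; multiplying such an error by a polynomial of degree at most $\mu_k - 1$ and dividing by $x^{\mu_k}$ lands the result in $x^{-2}\mathbb{C}[\![x^{-1}]\!]$, whose polynomial part vanishes.

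I then apply $[\,\cdot\,]$ to both sides of the entry-wise expansion, isolating the \emph{boundary} term at $k = \max(i,j)$ separately in each of the three cases $i<j$, $i=j$, $i>j$. For $i<j$ the boundary is $u_{i,j}\lambda_j$, whose polynomial part, after splitting $\lambda_j$ and reusing the error-vanishing lemma, produces an additional $[b_{i,j}b_{j,j}/x^{\mu_j}]$ that enlarges the summation range to include $k = j$; for $i=j$ the boundary $\lambda_i$ contributes both the $x^{\mu_i}$ shift and $b_{i,i}$; and for $i>j$ the boundary $\lambda_i v_{i,j} = B_{i,j}$ contributes $b_{i,j}$ directly. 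In each case, rearrangement yields exactly the second displayed identity of the theorem. Extracting the coefficient of $x^l$ for $0 \leq l \leq \mu_{\max(i,j)}-1$, using the sign conventions $a_{i,j} = -\sum_l a_{i,j}^{(l)}x^l$ and $b_{i,j} = -\sum_l b_{i,j}^{(l)}x^l$, reproduces the recursive definition of $b_{i,j}^{(l)}$ stated just before the theorem. The induction on the partial order described there, starting from $(i,j) = (n,n)$ where $a_{n,n} = \lambda_n$ makes the identity immediate, then identifies the recursively defined $b_{i,j}$ with the polynomial parts asserted in the first claim.

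The main technical obstacle is the boundary bookkeeping: knowing exactly which term to single out, what $\delta_{i,j}x^{\mu_i}$-type correction it produces, and which summation range it generates. Once the error-vanishing lemma is in place, however, each of the three cases reduces to a direct polynomial-part computation, and the two displayed identities fall out simultaneously from a single application of $[\,\cdot\,]$ to the matrix product.
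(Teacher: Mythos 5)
Your proposal is correct and takes essentially the same route as the paper's proof: both proceed by reverse induction on $\max(i,j)$, expand $a_{i,j}=\sum_k u_{i,k}\lambda_k v_{k,j}$, replace each summand with $k>\max(i,j)$ by $b_{i,k}b_{k,j}/x^{\mu_k}$ modulo $x^{-1}\mathbb{C}[\![x^{-1}]\!]$ using the inductive hypothesis (your ``error-vanishing lemma'' is precisely the content of the paper's remark following the theorem), and then solve the resulting implicit relation at the boundary index $k=\max(i,j)$. The only point your sketch elides is that within the inductive step the diagonal case $i=j=s$ must be settled before the off-diagonal cases $i<j=s$ and $i>j=s$, since the latter two use $b_{s,s}$ to unwind the implicit equation at the boundary; the paper handles this ordering explicitly.
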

\begin{remark}
Notice that when $k\geq \max(i,j)$, both $b_{k,i}$ and $b_{j,k}$ are elements in $\mathbb{C}[x]$ of degree strictly smaller than $\mu_k$. So we may replace it with any elements in the same class modulo $x^{-1}\mathbb{C}[\![x^{-1}]\!]$ to calculate $[\frac{b_{k,j}b_{i,k}}{x^{\mu_k}}]_0$.
\end{remark}
\begin{proof}
We are going to prove it by reverse induction on $\max(i,j)$.

Notice that when $i=n$ and $j<n$, $b_{i,j}$, by definition is exactly $a_{i,j}$.

Notice that when $i=j=n$, $b_{n,n}$ by definition is exactly $a_{n,n}-x^{\mu_n}$.

Notice that when $i<n$ and $j=n$, by definition, we have the following identity
\[-b_{i,n}=[-a_{i,n}+\frac{b_{i,n}b_{n,n}}{x^{\mu_n}}]_0\]
Modulo $x^{-1}\mathbb{C}[\![x^{-1}]\!]$, we have \[a_{i,n}=\frac{b_{i,n}(x^{\mu_n}+b_{n,n})}{x^{\mu_n}}.\] Since $x^{\mu_n}+b_{n,n}=a_{n,n}$, modulo $x^{-1}\mathbb{C}[\![x^{-1}]\!]$, we have
\[a_{i,n}=\frac{a_{n,n}b_{i,n}}{x^{\mu_n}}\] which tells us, modulo $x^{\mu_n-1}\mathbb{C}[\![x^{-1}]\!]$, we have 
\[a_{n,n}b_{i,n}=a_{i,n}x^{\mu_n}.\]Since $\deg(a_{n,n})=\mu_n$, modulo $x^{-1}\mathbb{C}[\![x^{-1}]\!]$ we have that\[b_{i,n}=\frac{a_{i,n}}{a_{n,n}}x^{\mu_n},\] i.e.\[ b_{i,n}=[\frac{a_{i,n}}{a_{n,n}}x^{\mu_n}]_0.\]

Direct calculation can show that $v_{n,i}=\frac{a_{n,i}}{a_{n,n}}, 1\leq i<n$, $u_{i,n}=\frac{a_{i,n}}{a_{n,n}},1\leq i<n$, and $a_{n,n}=\lambda_n$. So we have proved all the identities when $\max(i,j)=n$.

Suppose we have shown that all the identities hold for $\max(i,j)\geq s+1$ where $s\leq n-1$. Consider the case that $i=s$.

Suppose that $i=j=s$. Notice that 
\[A=Udiag(\lambda_1,\cdots,\lambda_n)V.\] So we know that 
\[a_{s,s}=\lambda_s+\underset{t>s}{\sum}u_{s,t}\lambda_tv_{t,s}. \]
By induction hypothesis of the identity on $b_{t,s}$ and $b_{s,t}$ for $t>s$, modulo $x^{-1}\mathbb{C}[\![x^{-1}]\!]$, we have 
\[a_{s,s}=\lambda_s+\underset{t>s}{\sum}\frac{b_{s,t}b_{t,s}}{x^{\mu_t}} \]
 The above identity tells us that
\[[a_{s,s}]_{\mu_s}=x^{\mu_s}+\underset{t>s}{\sum}\frac{[b_{s,t}b_{t,s}]_{\mu_t+\mu_s}}{x^{\mu_t}}.\]

By the definition of $b_{s,s}$, we know that, modulo $x^{-1}\mathbb{C}[\![x^{-1}]\!]$,
\[-b_{s,s}=-a_{s,s}+[a_{s,s}]_{\mu_s}+\underset{t>s}{\sum}\frac{b_{s,t}b_{t,s}-[b_{s,t}b_{t,s}]_{\mu_t+\mu_s}}{x^{\mu_t}}.\]

Combining the above two identities, we know that, modulo $x^{-1}\mathbb{C}[\![x^{-1}]\!]$,
\[-b_{s,s}=-a_{s,s}+x^{\mu_s}+\underset{t>s}{\sum}\frac{b_{s,t}b_{t,s}}{x^{\mu_t}}, \] i.e. \[-b_{s,s}=[-a_{s,s}+x^{\mu_s}+\underset{t>s}{\sum}\frac{b_{s,t}b_{t,s}}{x^{\mu_t}}]_0. \]
Taking the identity, modulo $x^{-1}\mathbb{C}[\![x^{-1}]\!]$,  \[a_{s,s}=\lambda_s+\underset{t>s}{\sum}\frac{b_{s,t}b_{t,s}}{x^{\mu_t}} \] into the above identity, we know that \[-b_{s,s}=[-\lambda_s+x^{\mu_s}]_0,\]i.e. \[b_{s,s}=[\lambda_s-x^{\mu_s}]_0\]

Suppose that $i<j=s$. We have
\[a_{i,s}=u_{i,s}\lambda_s+\underset{t>s}{\sum}u_{i,t}\lambda_t v_{t,s}\]
By induction hypothesis, we know that, modulo $x^{-1}\mathbb{C}[\![x^{-1}]\!]$, we have 
\[a_{i,s}=u_{i,s}\lambda_s+\underset{t>s}{\sum}\frac{b_{i,t}b_{t,s}}{x^{\mu_t}} \]
The above identity tells us that
\[[a_{i,s}]_{\mu_s}=\underset{t>s}{\sum}\frac{[b_{i,t}b_{t,s}]_{\mu_t+\mu_s}}{x^{\mu_t}}. \]
By the definition of $b_{i,s}$, we know that, modulo $x^{-1}\mathbb{C}[\![x^{-1}]\!]$,
\[-b_{i,s}=-a_{i,s}+[a_{i,s}]_{\mu_s}+\underset{t\geq s}{\sum}\frac{b_{i,t}b_{t,s}-[b_{i,t}b_{t,s}]_{\mu_t+\mu_s}}{x^{\mu_t}}.\]
Actually, $[b_{i,s}b_{s,s}]_{\mu_s+\mu_s}=0$.
Combining the above three identities, we know that, modulo $x^{-1}\mathbb{C}[\![x^{-1}]\!]$,
\[-b_{i,s}=-a_{i,s}+\underset{t\geq s}{\sum}\frac{b_{i,t}b_{t,s}}{x^{\mu_t}}\]
Taking the identity, modulo $x^{-1}\mathbb{C}[\![x^{-1}]\!]$,\[a_{i,s}=u_{i,s}\lambda_s+\underset{t>s}{\sum}\frac{b_{i,t}b_{t,s}}{x^{\mu_t}} \]
into the above identity, we know that, modulo $x^{-1}\mathbb{C}[\![x^{-1}]\!]$,
\[-b_{i,s}=-u_{i,s}\lambda_s+\frac{b_{i,s}b_{s,s}}{x^{\mu_s}}, \] i.e. modulo $x^{\mu_s-1}\mathbb{C}[\![x^{-1}]\!]$, \[ -b_{i,s}(b_{s,s}
+x^{\mu_s})=-u_{i,s}\lambda_sx^{\mu_s}.\]
Notice that, modulo $x^{-1}\mathbb{C}[\![x^{-1}]\!]$, we have 
\[b_{s,s}
+x^{\mu_s}=\lambda_s.\]
Combining with the fact that $deg(b_{i,s})\leq \mu_s-1$ we know that , modulo $x^{\mu_s-s}\mathbb{C}[\![x^{-1}]\!]$, we have 
\[ -b_{i,s}\lambda_s=-u_{i,s}\lambda_{s} x^{\mu_s}\], i.e.
\[b_{i,s}=[x^{\mu_s}u_{i,s}]_0\]

Similar analysis can be applied to $b_{s,i}$ to finish all the proof.

\end{proof}
\begin{corollary}\label{tech}
When $l\geq\mu_{\max(i,j)}$, we have 
\[-a_{i,j}^{(l)}=c_{i,j}^{(l)}.\]
    
\end{corollary}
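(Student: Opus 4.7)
The plan is to deduce Corollary~\ref{tech} directly from the second family of identities in Theorem~\ref{bijectioncoef} by extracting the coefficient of $x^l$ for $l \geq \mu_k$, where $k := \max(i,j)$. Recall from the statement of that theorem that, in each case, the left hand side $-b_{i,j}$ equals $[\,Y_{i,j}\,]_0$ where $Y_{i,j}$ is a specific Laurent series in $x^{-1}$, and $[\,\cdot\,]_0$ denotes the polynomial part. Equivalently, $Y_{i,j} = -b_{i,j} + R_{i,j}$ with $R_{i,j} \in x^{-1}\mathbb{C}[\![x^{-1}]\!]$. Since $b_{i,j}$ has degree strictly less than $\mu_k$, comparing coefficients of $x^l$ for $l \geq \mu_k$ on both sides forces the $x^l$-coefficient of $Y_{i,j}$ to vanish.

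Next, I will read off the $x^l$-coefficient of $Y_{i,j}$ term by term. The contribution of $\delta_{i,j}x^{\mu_i}$ is $\delta_{i,j}\delta_{l,\mu_i}$, the contribution of $-a_{i,j}$ is $a_{i,j}^{(l)}$ (using $a_{i,j} = -\sum_{m} a_{i,j}^{(m)} x^{m}$), and for each summand $\frac{b_{t,j} b_{i,t}}{x^{\mu_t}}$ the coefficient is $\sum_{p+q = l+\mu_t} b_{t,j}^{(p)} b_{i,t}^{(q)}$. Setting these equal to zero already yields an expression of the form $-a_{i,j}^{(l)} = \delta_{i,j}\delta_{l,\mu_i} + \sum_{t}(\cdots)$; it remains only to verify that the range of the sum collapses to $t > k$, which is exactly the range appearing in the definition of $c_{i,j}^{(l)}$.

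The key step, and the only place requiring real thought, is a degree check that kills the terms with $j \le t \le k$. For such $t$, both $b_{t,j}$ and $b_{i,t}$ are polynomials of degree at most $\mu_{\max(t,j)}-1$ and $\mu_{\max(i,t)}-1$ respectively, each of which is $\leq \mu_k - 1$. Hence $b_{t,j}b_{i,t}$ has degree at most $2\mu_k - 2$, strictly smaller than $l + \mu_t$ as soon as $l \geq \mu_k \geq \mu_t$. So these terms contribute nothing, and only $t > k$ survives. Combining this with the previous paragraph gives $-a_{i,j}^{(l)} = \delta_{i,j}\delta_{l,\mu_i} + \sum_{t > k}\sum_{p+q=l+\mu_t} b_{t,j}^{(p)} b_{i,t}^{(q)} = c_{i,j}^{(l)}$, as claimed. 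I expect the main nuisance to be not the argument itself, which is essentially a short bookkeeping exercise, but keeping the three cases $i<j$, $i=j$, $i>j$ notationally uniform; this can be handled by noting that the sole difference among them is the presence of the $t=j$ summand in the $i<j$ case, which, as above, is killed by the same degree bound.
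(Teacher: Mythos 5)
Your proposal is correct and is exactly the unpacking the paper intends: its own proof of Corollary~\ref{tech} is the one-line remark that the claim ``just comes from the latter identities'' of Theorem~\ref{bijectioncoef}, and extracting the coefficient of $x^{l}$ for $l\geq\mu_{\max(i,j)}$ from $-b_{i,j}=[Y_{i,j}]_0$ is precisely how that works. One small slip in your degree check: you bound both $\deg b_{t,j}$ and $\deg b_{i,t}$ by $\mu_k-1$ and then compare $2\mu_k-2$ with $l+\mu_t$ ``as soon as $l\geq\mu_k\geq\mu_t$'' --- but the sequence $\vec{\mu}$ is not assumed monotone, so $\mu_t\leq\mu_k$ is not available. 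The fix is immediate: for the extra indices $j\leq t\leq k$ one has $\deg b_{t,j}\leq\mu_{\max(t,j)}-1=\mu_t-1$ and $\deg b_{i,t}\leq\mu_{\max(i,t)}-1=\mu_k-1$, so $\deg(b_{t,j}b_{i,t})\leq\mu_t+\mu_k-2<\mu_t+l$, and the unwanted terms still vanish. With that correction the argument is complete.
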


\begin{proof}
Just comes from the latter identities in previous theorems.
\end{proof}
Let us call $T_n$ to be the set of the elements $-a_{i,j}^{(l)}-c_{i,j}^{(l)},1\leq i,j\leq n,l\geq \mu_{\max(i,j)}$.

Suppose that the standard basis of $=\mathbb{C}[x]^n$ is $(\vec{e}_i)_{1\leq i\leq n}$. Introduce the following elements of $\mathbb{C}[x]^n$:
\[\vec{v}_{i,0}=\vec{e}_i,\quad 1\leq i\leq n\]
\[\vec{v}_{i,k}=x\vec{v}_{i,k-1}-\underset{j<i}{\sum}b_{j,i}^{(\mu_i-k)}\vec{v}_{j,0},\quad 1\leq k\leq \mu_i-1, 1\leq i \leq n\]

 Call $\vec{b}_{i,j}$ to be the column vector $\begin{pmatrix}b_{i,j}^{(0)}\\b_{i,j}^{(1)}\\\cdots\\b_{i,j}^{\mu_{\max(i,j)}-1}\end{pmatrix}$. If the above set of elements is a $\mathbb{C}$-basis of $\mathcal{L}/\mathcal{L}_A$, multiplication by $x$ with respect to this basis is a matrix of the following form
 \begin{itemize}
    \item The matrix is decomposed blockwise according to the partition \((\mu_1, \cdots, \mu_n)\). Denote the \((i,j)\) block by \(B_{i,j}\).
    \begin{itemize}
        \item For \(i = j\): \(B_{i,j}\) is the companion matrix of \(x^{\mu_i} + b_{i,i}\).
        \item For \(i < j\): \(B_{i,j} = \begin{pmatrix}\vec{b}_{i,j}^T\\0\end{pmatrix}\).
        \item For \(i > j\): \(B_{i,j} = \begin{pmatrix}0&\vec{b}_{i,j}\end{pmatrix}\).
    \end{itemize}
\end{itemize}
This is exactly the description of $S_{\vec{\mu}}$. Moreover, by Theorem~\ref{bijectioncoef}, we know that the construction of $b_{i,j}, 1\leq i,j\leq n$ gives an isomorphism between coefficients $a_{i,j}, 1\leq i,j\leq n$ and those $b_{i,j}, 1\leq i,j \leq n$. This tells us that , once we prove the set of elements listed above is actually a basis, we successfully construct the \textbf{MVy} map, and the map can be explicitly calculated via Theorem~\ref{bijectioncoef}.

Now we are going to prove the elements listed is a basis.

\begin{theorem}\label{basisquotient}
The set $P:=\{\vec{v}_{i,k}:1\leq i\leq n,1\leq k\leq \mu_i\}$ is a basis of the quotient lattice $\mathcal{L}/\mathcal{L}_A$.
\end{theorem}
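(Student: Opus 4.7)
The plan begins with a dimension count: since $\det A$ is the southeast $n\times n$ minor of $A$, the first description of $W_{\vec{\mu}}$ forces $\det A$ to be monic of degree $N_1 = \sum_i\mu_i$, so $\dim_{\mathbb{C}}(\mathcal{L}/\mathcal{L}_A) = N_1 = |P|$, and it suffices to prove that the $\vec{v}_{i,k}$ span $\mathcal{L}/\mathcal{L}_A$.

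Next I would reinterpret the $\vec{v}_{i,k}$ via the factorization $A = U x^{\vec{\mu}} T V$. A direct induction on $k$, comparing the recursion defining $\vec{v}_{i,k}$ with the identity $u_{j,i}^{(-l)} = -b_{j,i}^{(\mu_i-l)}$ (for $1\le l\le \mu_i$) extracted from $b_{j,i} = [x^{\mu_i} u_{j,i}]_0$ in Theorem~\ref{bijectioncoef}, yields
\[\vec{v}_{i,k} \;=\; \pi_+(x^k U\vec{e}_i), \qquad 0 \le k \le \mu_i - 1,\ 1\le i\le n,\]
where $\pi_+\colon \hat{\mathcal{L}} := \mathbb{C}((x^{-1}))^n \to \mathcal{L}$ is the $\mathbb{C}$-linear projection along $\mathcal{L}_- := x^{-1}\mathbb{C}[[x^{-1}]]^n$. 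Since $U$ is unipotent with entries in $\mathbb{C}[[x^{-1}]]$ it preserves $\mathcal{L}_-$, so the $\mathbb{C}[x]$-linear automorphism $U^{-1}$ of $\hat{\mathcal{L}}$ fits into a chain of natural isomorphisms
\[\mathcal{L}/\mathcal{L}_A \;\cong\; \hat{\mathcal{L}}/(\mathcal{L}_A + \mathcal{L}_-) \;\xrightarrow[\cong]{U^{-1}}\; \hat{\mathcal{L}}/(DV\mathcal{L} + \mathcal{L}_-) \;\cong\; \mathcal{L}/K,\]
with $D := x^{\vec{\mu}}T$ and $K := \pi_+(DV\mathcal{L}) \subset \mathcal{L}$; under this composite the class of $\vec{v}_{i,k}$ goes to the class of $x^k\vec{e}_i$. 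The claim thereby reduces to showing that $\{x^k\vec{e}_i : 0 \le k \le \mu_i - 1,\ 1 \le i \le n\}$ is a basis of $\mathcal{L}/K$.

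The core of the proof is a triangulation by a weight. Assign $w(x^b\vec{e}_i) := b - \mu_i$ to the monomial $x^b\vec{e}_i \in \mathcal{L}$, so the target basis is exactly the set of monomials of weight $< 0$. Writing $\Lambda_i$ for the $i$-th diagonal entry of $D = x^{\vec{\mu}}T$, the generator of $K$
\[g_{j,a} := \pi_+(DV x^a\vec{e}_j) = \pi_+(x^a\Lambda_j)\vec{e}_j + \sum_{k>j}\pi_+(x^a\Lambda_k v_{k,j})\vec{e}_k\]
has the unique highest-weight term $x^{a+\mu_j}\vec{e}_j$ of weight exactly $a$: the remaining contributions to the $j$-th coordinate come from the tail $\Lambda_j - x^{\mu_j} \in x^{\mu_j-1}\mathbb{C}[[x^{-1}]]$ (weight $< a$), while each off-diagonal coordinate has degree at most $a+\mu_k - 1$ (weight $\le a - 1$) because $v_{k,j} \in x^{-1}\mathbb{C}[[x^{-1}]]$ shaves one $x$-degree off $\Lambda_k$. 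Strong induction on weight then shows every monomial $x^b\vec{e}_i$ with $b \ge \mu_i$ lies in $K + \mathrm{span}_{\mathbb{C}}\{x^k\vec{e}_i : k \le \mu_i - 1\}$, and the dimension count upgrades spanning to a basis.

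The main obstacle is setting up the weight correctly so that the triangulation closes. Naive degree does not suffice: the off-diagonal degrees $a + \mu_k - 1$ can exceed $a + \mu_j$ when $\mu_k > \mu_j + 1$, so those contributions would not be strictly lower-order than the diagonal term $x^{a + \mu_j}\vec{e}_j$. The shift to $w(x^b\vec{e}_i) = b - \mu_i$ absorbs exactly this discrepancy, and the extra factor of $x^{-1}$ coming from $v_{k,j}$ produces the strict inequality on weights that drives the induction. This is the combinatorial mechanism underlying the generalized Mirkovic-Vybornov isomorphism in our setting.
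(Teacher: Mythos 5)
Your proof is correct, but it takes a genuinely different route from the one in the paper. The paper argues directly inside $\mathcal{L}/\mathcal{L}_A$: since $\#P=\deg\det A=\dim_{\mathbb{C}}\mathcal{L}/\mathcal{L}_A$ and the $\vec{v}_{i,0}=\vec{e}_i$ generate the quotient as a $\mathbb{C}[x]$-module, it suffices to show the span of $P$ is stable under multiplication by $x$, and this is done by an explicit polynomial identity expressing $x\vec{v}_{i,\mu_i-1}$ as a column of $A$ plus a combination of the $\vec{v}_{j,k}$ with coefficients $b_{j,i}^{(k)}$; that identity is proved formally by induction on $n$ using the defining recursions of the $b_{i,j}^{(l)},c_{i,j}^{(l)}$ and the vanishing of $T_n$ (Corollary~\ref{tech}). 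You instead exploit the factorization $A=Ux^{\vec{\mu}}TV$: the identification $\vec{v}_{i,k}=\pi_+(x^kU\vec{e}_i)$ (which, as you note, is exactly the content of $b_{j,i}=[x^{\mu_i}u_{j,i}]_0$ from Theorem~\ref{bijectioncoef}) lets you transport the problem through $U^{-1}$ to $\mathcal{L}/K$ with $K=\pi_+(x^{\vec{\mu}}TV\mathcal{L})$, where the shifted weight $b-\mu_i$ makes the generators $g_{j,a}$ upper triangular with distinct leading monomials $x^{a+\mu_j}\vec{e}_j$; I checked the weight estimates (the $x^{-1}$ from $v_{k,j}$ gives the needed strict drop) and the chain of isomorphisms (both $U$ and $U^{-1}$ preserve $\mathcal{L}_-$), and they are sound. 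Both arguments ultimately rest on Theorem~\ref{bijectioncoef}, yours through the $U$-formula and the paper's through Corollary~\ref{tech}. Your route is shorter and makes the triangularity behind the basis property transparent; the paper's heavier computation has the advantage of simultaneously producing the explicit matrix of multiplication by $x$ in the basis $P$ (the blocks $B_{i,j}$ built from the $\vec{b}_{i,j}$), which is what identifies $\textbf{MVy}(A)$ as an element of $S_{\vec{\mu}}$ — in your setup multiplication by $x$ does not descend naively to $\mathcal{L}/K$ (since $K$ is not $x$-stable), so that computation would still have to be done separately.
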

\begin{proof}
We are going to prove that the vector space (as a vector subspace of $\mathcal{L}/\mathcal{L}_A$) spanned by elements in $P$ is invariant under multiplication of $x$. Notice that $\# P$ is exactly the dimension of $\mathcal{L}/\mathcal{L}_A$. So, we know that  $P$ is a basis.

By the construction of $P$, we know that it suffices to show $x\vec{v}_{i,\mu_i},1\leq i\leq n$ are inside the vector subspace spanned by $P$ inside $\mathcal{L}/\mathcal{L}_A$.

Now consider the following technical lemma:

\begin{lemma}
    Suppose we are dealing with $\mathbb{C}[x,a_{i,j}^{(l_a)},b_{i,j}^{(l_b)},c_{i,j}^{(l_c)}:1\leq i,j\leq n,l_a\geq 0, \mu_{\max(i,j)}>l_b\geq 0,l_c\geq \mu_{\max(i,j)}]^n$ (here we view all these variables as free variables). Assume that the elements in $T_n$ are zero, and the defining relations of $b_{i,j}^{(l)},c_{i,j}^{(l)}$ hold, and defining relations of $\vec{v}_{i,j}$ hold. Then we have that 
\begin{align*}
    x\vec{v}_{i,\mu_i-1} &- \sum_{j=1}^{n} a_{j,i}\vec{e}_j -
     \sum_{l=1}^{i-1} b_{l,i}^{(0)}\vec{v}_{l,0} - \sum_{j=i}^{n} \left( \sum_{k=0}^{\mu_j-1} b_{j,i}^{(k)}\vec{v}_{j,k} \right) = 0, \quad 1 \leq i \leq n.
\end{align*}
\end{lemma}
\begin{proof}
We are going to prove it by induction on $n$.

When $n=1$, $\vec{v}_{1,l}=x^{l}\vec{e}_1,0\leq l\leq \mu_1-1$, $b_{1,1}^{(l)}=a_{1,1}^{(l)},0\leq l\leq \mu_1-1$ and $c_{1,1}^{(l)}=\delta_{l,\mu_1}=-a_{i,j}^{(l)},l\geq \mu_1 $. So we know that 

\begin{align*}
    &\quad\  x\vec{v}_{1,\mu_1-1} - a_{1,1}\vec{e}_1 - \sum_{k=0}^{\mu_1-1} b_{1,1}^{(k)}\vec{v}_{1,k} \\
    &= x^{\mu_1-1}\vec{v}_1 + \sum_{l \geq 0} a_{i,j}^{(l)}x^l\vec{e}_1 - \sum_{k=0}^{\mu_1-1} a_{1,1}^{(k)}\vec{v}_{1,k} \\
    &= 0.
\end{align*}

So the lemma holds when $n=1$.

Consider the case that $n\geq 2$. Suppose the lemma is true for $n-1$.

Consider the case where $i\geq 2$ first.

Let us introduce the following elements.
\[\vec{u}_{i,0}=e_i,\quad 2\leq i\leq n\]
\[\vec{u}_{i,k}=x\vec{u}_{i,k-1}-\sum_{j=2}^{i-1}b_{j,i}^{(\mu_i-k)}\vec{u}_{j,0},\quad 1\leq k\leq \mu_i-1,\ 2\leq i \leq n\]

By induction hypothesis, we know that 
\[x\vec{u}_{i,\mu_i-1}-\sum_{j=2}^{n}a_{j,i}\vec{e}_j-\sum_{l=2}^{i-1}b_{l,i}^{(0)}\vec{v}_{l,0}-\underset{j\geq i}{\sum}\left( \sum_{k=0}^{\mu_j-1} b_{j,i}^{(k)}\vec{v}_{j,k} \right),\quad 2\leq i\leq n. \]
To prove 
\[x\vec{v}_{i,\mu_i-1}-\sum_{j=1}^{n} a_{j,i}\vec{e}_j-\sum_{l=1}^{i-1}b_{l,i}^{(0)}\vec{v}_{l,0}-\underset{j\geq i}{\sum}\left(\sum_{k=0}^{\mu_j-1}b_{j,i}^{(k)}\vec{v}_{j,k}\right)=0,\quad 2\leq i\leq n,\]
it suffices to prove, for $2\leq i\leq n$,
\[x(\vec{v}_{i,\mu_i-1}-u_{i,\mu_i-1})=a_{1,i}\vec{e}_1+\sum_{l=2}^{i-1}b_{l,i}^{(0)}(\vec{v}_{l,0}-\vec{u}_{l,0})+b_{1,i}^{(0)}\vec{v}_{1,0}+\underset{j\geq i}{\sum}\left(\sum_{k=0}^{\mu_j-1}b_{j,i}^{(k)}(\vec{v}_{j,k}-\vec{u}_{j,k})\right)\]
Notice that $v_{l,0}=u_{l,0},\ l\geq 2$. So we know that it suffices to prove 
\[x(\vec{v}_{i,\mu_i-1}-\vec{u}_{i,\mu_i-1})=a_{1,i}\vec{e}_1+b_{1,i}^{(0)}\vec{v}_{1,0}+\underset{j\geq i}{\sum}\left(\sum_{k=0}^{\mu_j-1}b_{j,i}^{(k)}(\vec{v}_{j,k}-\vec{u}_{j,k})\right),\quad 2\leq i\leq n\]
By definition, we have 
\[\vec{v}_{j,k}-\vec{u}_{j,k}=-\sum_{l=0}^{k-1} x^{l} b_{1,j}^{(\mu_j-k+l)} \vec{v}_{1,0},\quad  1\leq k\leq \mu_j-1,\ j\geq 2\]
So it suffices to prove
\begin{align*}
    -x \left( \sum_{l=0}^{\mu_i-2} x^{l} b_{1,i}^{(1+l)} \vec{v}_{1,0} \right) &= a_{1,i}\vec{e}_1 + b_{1,i}^{(0)}\vec{v}_{1,0} \\
    &- \underset{j\geq i}{\sum} \left( \sum_{k=0}^{\mu_j-1} b_{j,i}^{(k)} \left( \sum_{l=0}^{k-1} x^{l} b_{1,j}^{(\mu_j-k+l)} \vec{v}_{1,0} \right) \right), \quad 2 \leq i \leq n.
\end{align*}
Combining with the fact that $\vec{v}_{1,0}=\vec{e}_1$, we only need to prove
\[\sum_{l=1}^{\mu_i-1}x^l b_{1,i}^{(l)}+a_{1,i}+b_{1,i}^{(0)}-\underset{j\geq i}{\sum}\left(\sum_{k=0}^{\mu_j-1}b_{j,i}^{(k)}\left(\sum_{l=0}^{k-1} x^{l} b_{1,j}^{(\mu_j-k+l)}\right)\right)=0,\quad 2\leq i\leq n \]
By rearranging the summation symbol, the above identities are equivalent to the following:
\begin{align*}
    &\sum_{l=0}^{\mu_i-1} x^l \left( b_{1,i}^{(l)} - a_{1,i}^{l} - \sum_{j=i}^{n} \left( \sum_{p+q=\mu_j+l} b_{1,j}^{(p)}b_{j,i}^{(q)} \right) \right) \\
    &+ \sum_{l \geq \mu_i} x^l \left( -a_{1,i}^{l} - \sum_{j=i}^{n} \left( \sum_{p+q=\mu_j+l} b_{1,j}^{(p)}b_{j,i}^{(q)} \right) \right) = 0, \quad 2 \leq i \leq n.
\end{align*}
Since elements in $T_n$ are zero, we know that $-a_{1,i}^{l}=c_{1,i}^{l},\quad l\geq \mu_i$. Since the defining relations of $b_{i,j}^{(l)}$ and $a_{i,j}^{(l)}$ hold, we know that 
\[b_{1,i}^{(l)}=a_{1,i}^{(l)}+\sum_{j=i}^{n}\left(\underset{p+q=\mu_j+l}{\sum}b_{1,j}^{(p)}b_{j,i}^{(q)}\right),\quad 0\leq l\leq \mu_i-1\]
\[c_{1,i}^{l}=\sum_{j=i}^{n}\left(\underset{p+q=\mu_j+l}{\sum}b_{1,j}^{(p)}b_{j,i}^{(q)}\right),\quad l\geq \mu_i.\]
Hence the identities we want to prove are true.

Now consider the case where $i=1$. The identity we want to prove is 
\[x\vec{v}_{1,\mu_1-1}=\sum_{j=1}^{n}a_{j,1}\vec{e}_j+\sum_{j=1}^{n}\left(\sum_{k=0}^{\mu_j-1}b_{j,1}^{(k)}\vec{v}_{j,k}\right).\]
Defining relation of $c_{1,1}^{(\mu_1)}$ tells us that 
\[c_{1,1}^{(\mu_1)}=1+\underset{s>1}{\sum} \left(\underset{p+q=\mu_s+l}{\sum}b_{s,j}^{(p)}b_{i,s}^{(q)}\right)\]
So we know that 
\[x\vec{v}_{1,\mu_1-1}=c_{1,1}^{(\mu_1)}x\vec{v}_{1,\mu_1-1}-\left(\underset{s>1}{\sum} \left(\underset{p+q=\mu_s+l}{\sum}b_{s,j}^{(p)}b_{i,s}^{(q)}\right)\right)x\vec{v}_{1,\mu_1-1}.\]
Since $-c_{1,1}^{(\mu_1)}-a_{1,1}^{(\mu_1)}\in T_n$ and elements in $T_n$ are assumed to be $0$. So we know that 
\[x\vec{v}_{1,\mu_1-1}=-a_{1,1}^{(\mu_1)}x\vec{v}_{1,\mu_1-1}-\left(\underset{s>1}{\sum} \left(\underset{p+q=\mu_s+l}{\sum}b_{s,j}^{(p)}b_{i,s}^{(q)}\right)\right)x\vec{v}_{1,\mu_1-1}.\]
Notice that $v_{1,\mu_1-1}=x^{\mu_1-1}e_1$. Now it suffices to prove 
\begin{align*}
    & -a_{1,1}^{(\mu_1)}x^{\mu_1}\vec{e}_1 - \left( \sum_{s>1} \left( \sum_{p+q=\mu_s+\mu_1} b_{s,j}^{(p)}b_{i,s}^{(q)} \right) \right) x^{\mu_1}\vec{e}_1 \\
    =& \sum_{j=1}^{n} a_{j,1}\vec{e}_j + \sum_{j=1}^{n} \left( \sum_{k=0}^{\mu_j-1} b_{j,1}^{(k)}\vec{v}_{j,k} \right).
\end{align*}
By definition, we have \[\vec{v}_{j,k}=x^k \vec{e}_j-\sum_{l=0}^{k-1}x^l\left( \underset{i<j}{\sum}b_{i,j}^{\mu_i-k+l}\vec{e}_i\right)\]
So we know that
\begin{align*}&\ \sum_{j=1}^{n}\left(\sum_{k=0}^{\mu_j-1}b_{j,1}^{(k)}\vec{v}_{j,k}\right)\\=&\ \sum_{j=1}^{n}\left(\sum_{k=0}^{\mu_j-1}b_{j,1}^{(k)}x^k\right)\vec{e}_j\\ &\ -\sum_{j=1}^{n-1}\left(\underset{k\geq0}{\sum}x^k\left(\sum_{j=i+1}^{n}\left(\underset{p+q=\mu_j+k}{\sum}b_{j,1}^{(p)}b_{i,j}^{(q)}\right)\right)\right)\vec{e}_i\end{align*}

Take this into the identity we want to prove and move right to left. 
Then, the coefficient of each $\vec{e}_i,2\leq i\leq n$ is
\begin{align*}
    &\sum_{k=0}^{\mu_i-1}x^k\left(b_{i,1}^{(k)}-a_{i,1}^{(k)}-\sum_{j=i+1}^{n}\left(\underset{p+q=\mu_j+k}{\sum}b_{j,1}^{(p)}b_{i,j}^{(q)}\right)\right)\\+ &\underset{k\geq\mu_i}{\sum}x^k\left(-a_{i,1}^{(k)}-\sum_{j=i+1}^{n}\left(\underset{p+q=\mu_j+k}{\sum}b_{j,1}^{(p)}b_{i,j}^{(q)}\right)\right)
\end{align*} 
The coefficient of each degree is just the defining relations of $b_{i,1}^{(k)}$ and $-a_{i,1}^{(k)}-c_{i,1}^{(k)}$.
 So it suffices to show the coefficient of $e_1$ is zero. Actually the coefficient can be written as
\begin{align*} &\sum_{k=0}^{\mu_1-1}x^k\left(b_{1,1}^{(k)}-a_{1,1}^{(k)}-\underset{n\geq j>1}{\sum}\left(\underset{p+q=\mu_j+k}{\sum}b_{j,1}^{(p)}b_{1,j}^{(q)}\right)\right)\\+&\underset{k>\mu_1}{\sum}x^k\left(-a_{1,1}^{(k)}-\underset{n\geq j>1}{\sum}\left(\underset{p+q=\mu_j+k}{\sum}b_{j,1}^{(p)}b_{1,j}^{(q)}\right)\right) \end{align*}
which is also a combination of the defining relations of $b_{i,1}^{(k)}$ and $-a_{i,1}^{(k)}-c_{i,1}^{(k)}$. So, as a whole, it’s zero.
\end{proof}
(Continue the proof of Theorem~\ref{basisquotient}.) In $\mathcal{L}/\mathcal{L}_A$, $T_n$ are zero and the defining relations of $b_{i,j}^{(l)},c_{i,j}^{(l)}$ hold. So, in $\mathcal{L}/\mathcal{L}_A$, we have 
\[x\vec{v}_{i,\mu_i-1}=\sum_{l=1}^{i-1}b_{l,i}^{(0)}\vec{v}_{l,0}+\underset{j\geq i}{\sum}\left(\sum_{k=0}^{\mu_j-1}b_{j,i}^{(k)}\vec{v}_{j,k}\right) \] which finishes the proof.
\end{proof}

\def\C{\mathbb{C}}
\def\HK{H_{K}}
\def\rr{\bar r}
\def\qq{\bar q}

\section{A result in circle-equivariant cohomology (by Matthias Franz)}\label{appendix2}

We consider singular cohomology with coefficients in a commutative ring~$R$ with unit.
The goal of this appendix is to prove the following:

\begin{proposition}
  \label{thm:circle}
  Let $X$ be a space with an action of the group~$K=S^{1}$ or~$K=\C^{\times}$.
  If $\HK^{*}(X)$ is concentrated in even degrees,
  then the restriction map~$\rho\colon\HK^{*}(X)\to H^{*}(X)$ is surjective in even degrees.
\end{proposition}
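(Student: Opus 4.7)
The plan is to deduce this from the Gysin long exact sequence of a principal circle bundle, applied to the Borel construction. Let me write $X_K = EK \times_K X$ so that $H_K^{*}(X) = H^{*}(X_K)$, and consider the quotient map
\[
p \colon EK \times X \longrightarrow X_K,
\]
which is a principal $K$-bundle. Since $K$ is homotopy equivalent to $S^{1}$ (both for $K=S^{1}$ and $K=\C^{\times}$), this is an oriented circle bundle, and the total space $EK \times X$ is homotopy equivalent to $X$ via the second projection. Under this equivalence, $p$ corresponds to the fiber inclusion $X \hookrightarrow X_K$, whose induced map on cohomology is exactly the restriction map $\rho$.

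The Gysin sequence for an oriented circle bundle then reads
\[
\cdots \to \HK^{n-2}(X) \xrightarrow{\ \cup e\ } \HK^{n}(X) \xrightarrow{\ \rho\ } H^{n}(X) \xrightarrow{\ \delta\ } \HK^{n-1}(X) \xrightarrow{\ \cup e\ } \HK^{n+1}(X) \to \cdots,
\]
where $e \in \HK^{2}(X)$ is the Euler class of the circle bundle $p$ (equivalently, the pullback of the generator of $H^{2}(BK)$ along the projection $X_K \to BK$). The existence of this exact sequence requires only that $K \simeq S^{1}$ and does not impose any finiteness or formality hypothesis on $X$.

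Now fix an even degree $n = 2m$. The term immediately to the right of $\rho$ in the Gysin sequence is $\HK^{n-1}(X) = \HK^{2m-1}(X)$, which vanishes by the assumption that $\HK^{*}(X)$ is concentrated in even degrees. Exactness therefore forces $\rho \colon \HK^{2m}(X) \to H^{2m}(X)$ to be surjective, which is the desired conclusion. The only point that truly needs to be checked is the identification of the Gysin map $p^{*}$ with the restriction map $\rho$, which follows from the homotopy equivalence $EK \times X \simeq X$ described above; everything else is a one-line application of exactness.
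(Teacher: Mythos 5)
Your argument is correct, and it takes a genuinely different route from the one in the paper. The paper works inside the Serre spectral sequence of the Borel fibration $X\hookrightarrow EK\times_{K}X\to BK$ (fiber $X$, base $BK$), identifies $\rho$ with the fiber edge homomorphism, and rules out a nonzero differential emanating from $E_{2r}^{0,q}$ for even $q$ by a somewhat delicate induction on the pages, using that multiplication by powers of $t$ acts by isomorphisms on suitable rows. You instead use the \emph{other} fibration attached to the Borel construction, the principal $K$-bundle $EK\times X\to EK\times_{K}X$ (fiber $K\simeq S^{1}$, base $X_{K}$); its two-row spectral sequence collapses to the Gysin sequence, and exactness at $H^{n}(X)$ immediately gives $\operatorname{coker}\bigl(\rho\colon H_{K}^{n}(X)\to H^{n}(X)\bigr)\hookrightarrow H_{K}^{n-1}(X)$, which vanishes for even $n$ by hypothesis. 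The identifications you rely on are all standard and valid for arbitrary $X$ and arbitrary coefficients: the bundle is the pullback of $EK\to BK$ along $X_{K}\to BK$, hence a numerable principal bundle and in particular a Serre fibration with trivial action of $\pi_{1}(X_{K})$ on $H^{*}(K)$ (the structure group is connected), and $p^{*}$ agrees with $\rho$ up to the isomorphism induced by the homotopy equivalence $\{e\}\times X\hookrightarrow EK\times X$. Your route is shorter and in fact yields a slightly sharper, degree-local statement: surjectivity of $\rho$ in degree $n$ needs only $H_{K}^{n-1}(X)=0$, not evenness of all of $H_{K}^{*}(X)$. What the paper's longer argument buys is structural information about the first spectral sequence itself (the behaviour of multiplication by $t$ on its pages), which is not needed for the proposition as stated.
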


  We consider the Serre spectral sequence for the bundle~$X\hookrightarrow EK\times_{K}X\to BK$
  with second page
  \begin{equation}
    \label{eq:E2}
    E_{2}^{p,q} = H^{p}(BK;H^{q}(X)) = H^{p}(BK)\otimes_{R} H^{q}(X),
  \end{equation}
  \emph{cf.}~\cite[Sec.~9.4]{Spanier:1989}.
  Since $H^{*}(BK)=R[t]$ is a polynomial ring in one variable of degree~$2$,
  we have $E_{r}^{p,q}=0$ for~$r\ge2$ and odd~$p$, so that the differential~$d_{2r+1}$
  vanishes for~$r\ge1$ and $E_{2r+1}^{p,q}=E_{2r+2}^{p,q}$.
  As this spectral sequence is one of algebras, each row~$E_{2r}^{*,q}$ is a module over~$E_{2}^{*,0}=R[t]$ for~$r\ge1$,
  and each differential~$d_{2r}$ is $R[t]$-linear.

  \begin{lemma}
    Let $\rr\ge1$, and let $\qq$ be odd. Assume that for any~$1\le r<\rr$
    there is no non-zero differential ending on a row~$E_{2r}^{*,q}$ with odd~$q\le \qq$.
    Then for any~$s\ge0$ and~$1\le r\le \rr$, multiplication by~$t^{s}$ is
    \begin{enumroman}
    \item an isomorphism~$E_{2r}^{0,q}\to E_{2r}^{2s,q}$ for odd~$q\le \qq$,
    \item an isomorphism~$E_{2r}^{2r-2,q}\to E_{2r}^{2r-2+2s,q}$ for even~$q\le\qq-2r+3$.
    \end{enumroman}
  \end{lemma}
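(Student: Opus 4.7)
My plan is induction on~$r$, from $r=1$ up to $\rr$. The base case is immediate from~\eqref{eq:E2}: on any row, $\cdot t^{s}\colon E_{2}^{0,q}\to E_{2}^{2s,q}$ is visibly an isomorphism of free $R$-modules, and (ii) at~$r=1$ reduces to the same statement, so both parts hold at $r=1$ with no constraint on~$q$. Two structural facts drive the inductive step from level~$r$ to level~$r+1$ (for $r<\rr$): first, the differential~$d_{2r}$ is $R[t]$-linear, since the spectral sequence is multiplicative and $E_{2r}^{*,0}$ is a quotient of $E_{2}^{*,0}=R[t]$; second, by the standing hypothesis every~$d_{2r}$ landing on an odd row~$\le\qq$ vanishes. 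Combining these with $E_{2r+2}^{p,q}=\ker d_{2r}^{p,q}/\mathrm{im}\,d_{2r}^{p-2r,q+2r-1}$ will let me pass (i) and (ii) from level~$r$ to level~$r+1$.

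For~(i) at level~$r+1$ with odd $q\le\qq$, the hypothesis kills all incoming $d_{2r}$ on row~$q$, so $E_{2r+2}^{\bullet,q}$ is just the kernel of $d_{2r}$ on that row. By $R[t]$-linearity the isomorphism $\cdot t^{s}\colon E_{2r}^{0,q}\to E_{2r}^{2s,q}$ provided by inductive~(i) carries kernel to kernel; to upgrade this to an isomorphism of kernels I need $\cdot t^{s}\colon E_{2r}^{2r,q-2r+1}\to E_{2r}^{2r+2s,q-2r+1}$ to be injective. The row~$q-2r+1$ is even and bounded by $\qq-2r+1\le\qq-2r+3$, so inductive~(ii) applied at columns $2r-2$, $2r$, and $2r+2s$ (composing $\cdot t^{s+1}$ with the inverse of~$\cdot t$) delivers precisely this isomorphism.

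For~(ii) at level~$r+1$ with even $q\le\qq-2r+1$, the outgoing $d_{2r}$ at $E_{2r}^{2r+2s',q}$ lands on the odd row $q-2r+1\le\qq$ and hence vanishes, so $E_{2r+2}^{2r+2s',q}=E_{2r}^{2r+2s',q}/\mathrm{im}\,d_{2r}^{2s',\,q+2r-1}$ for $s'=0,s$. Inductive~(ii) gives $\cdot t^{s}$ an isomorphism on the numerators. Since $q+2r-1$ is odd and $\le\qq$, inductive~(i) gives $\cdot t^{s}\colon E_{2r}^{0,q+2r-1}\to E_{2r}^{2s,q+2r-1}$ an isomorphism, and $R[t]$-linearity of~$d_{2r}$ then forces the target subimage to equal $t^{s}$ times the source subimage. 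Hence $\cdot t^{s}$ descends to an isomorphism of quotients. The only real hurdle is bookkeeping: tracking parities and the shifting $q$-ranges through each application of the inductive hypothesis.
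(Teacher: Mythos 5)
Your proposal is correct and follows essentially the same route as the paper: induction on the page number, using $R[t]$-linearity of the differentials together with the vanishing hypothesis to transport the isomorphisms through kernels (for the odd rows) and cokernels (for the even rows), including the same zigzag $(\cdot t)^{-1}\circ(\cdot t^{s+1})$ to shift between columns $2r-2$ and $2r$. The only difference is cosmetic — you step from level $r$ to $r+1$ and treat the kernel and quotient separately, whereas the paper packages both into one commutative square for the differential $d_{2r-2}$ out of the odd row — and all your parity and range checks ($q-2r+1\le\qq-2r+3$, $q+2r-1\le\qq$) are accurate.
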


  \begin{proof}
    We proceed by induction on~$r$.
    For~$r=1$, both claims follow directly from the description~\eqref{eq:E2} of the second page
    of the spectral sequence. Now let $2\le r\le\rr$ and assume both claims proven for~$r-1$.
    
    For~$s\ge0$ and odd~$q\le\qq$ we have a commutative diagram
    \begin{equation*}
      \begin{tikzcd}
	E_{2r-2}^{0,q} \arrow{d}[left]{d_{2r-2}} \arrow{r}{{}\cdot t^{s}} & E_{2r-2}^{2s,q} \arrow{d}{d_{2r-2}} \\
	E_{2r-2}^{2r-2,q-2r+3} \arrow{r}{{}\cdot t^{s}} & E_{2r-2}^{2r-2+2s,q-2r+3}
      \end{tikzcd}
    \end{equation*}
    whose horizontal arrows are isomorphisms by induction. (We can write the bottom arrow as the zigzag
    of isomorphisms
    \begin{equation*}
      \begin{tikzcd}
	E_{2r'}^{2r',q'} & E_{2r'}^{2r'-2,q'} \arrow{l}[above]{{}\cdot t} \arrow{r}{{}\cdot t^{s+1}} & E_{2r'}^{2r'+2s,q'}
      \end{tikzcd}
    \end{equation*}
    with~$r'=r-1$ and~$q'=q-2r+3=q-2r'+1\le\qq-2r'+3$.)
    
    By assumption there is no non-zero differential ending on the odd row~$E_{2r-2}^{*,q}$. 
    Hence we have an isomorphism~$E_{2r}^{0,q}\cong E_{2r}^{2s,q}$ for~$s\ge0$. 
    Also, there is no non-zero differential starting on the even row~$E_{2r-2}^{*,q-2r+3}$
    for otherwise there would be a non-zero differential ending on the row~$q-2r+2\le\qq$ or below it.
    This implies that we have an isomorphism~$E_{2r}^{2r-2,q'}\cong E_{2r}^{2r-2+2s,q'}$ for~$s\ge0$ and~$q'=q-2r+3\le\qq-2r+3$.
    Note that any even row~$q'\le\qq-2r+3$ is covered by this argument since the differential~$d_{2r-2}$
    ending on row~$q'$ starts on the odd row~$q'+2r-3\le\qq$.
  \end{proof}
  
\begin{proof}[Proof of Proposition~\ref{thm:circle}]
  The restriction map~$\rho$ corresponds to the edge homomorphism~$E_{\infty}^{0,*}\to H^{*}(X)$
  of the spectral sequence.
  Assume that $\rho$ is not surjective in even degrees. This means that there is a non-trivial differential
  \begin{equation*}
    \label{eq:dr}
    d_{2r}\colon E_{2r}^{0,q} \to E_{2r}^{2r,q-2r+1}
  \end{equation*}
  for some~$r\ge1$ and some even~$q$.
  We choose $q$ and~$r$ such that $q-2r+1$ is minimal, and among those pairs~$(r,q)$ we pick the one with minimal~$r$.
  Then there is an~$a\in E_{2r}^{0,q}$ with~$d_{2r}(a)\ne0\in E_{2r}^{2r,q-2r+1}$.
  By the first claim of the lemma (with~$\rr=r$ and~$\qq=q-2r+1$),
  we can find a non-zero~$b\in E_{2r}^{0,q-2r+1}$ of odd degree~$q-2r+1$ such that $d_{2r}(a)=t^{r}b$.

  Since $\HK^{*}(X)$ is concentrated in even degrees, there must be an~$r'\ge r$ such that
  $d_{2r'}(b)\ne 0\in E_{2r'}^{2r',q-2r-2r'+2}$, hence also
  \begin{equation*}
    d_{2r'}(t^{r} b) = t^{r}\,d_{2r'}(b)\ne 0 \in E_{2r'}^{2r'+2r,q-2r-2r'+2}
  \end{equation*}
  by the second claim of the lemma with~$\rr\ge r'$ and~$\qq=q-2r-1$ since
  \begin{equation*}
    q -2r -2r' + 2 = \qq - 2r' + 3.
  \end{equation*}

  However, if $r'=r$, this contradicts the fact that $d_{2r}$ is a differential as
  \begin{equation*}
    d_{2r'}(t^{r}b) = d_{2r}d_{2r}(a) = 0.
  \end{equation*}
  If $r'>r$, then we still get a contradiction because $t^{r}b$ vanishes in~$E_{2r+2}^{2r,q-2r+1}$,
  hence also in~$E_{2r'}^{2r,q-2r+1}$. This again forces $d_{2r'}(t^{r}b)=0$ and completes the proof.
\end{proof}

From this we deduce Lemma~\ref{ref-appendix-b} of the main text:

\begin{corollary}
  Let $X$ be a space with an action of~$T\times K$, where $T$ is a torus and $K$ a circle.
  If $H_{T\times K}^{*}(X)$ is concentrated in even degrees, then
  the restriction map~$H_{T\times K}^{*}(X)\to H_{T}^{*}(X)$ is surjective in even degrees.
\end{corollary}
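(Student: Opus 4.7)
The plan is to reduce the statement directly to Proposition~\ref{thm:circle} by applying the Borel construction for~$T$ first and leaving the residual $K$-action. Set $Y := ET \times_{T} X$, where we let $T$ act on~$ET$ on the right (or equivalently, form the quotient for the diagonal action on~$ET\times X$). Since the actions of $T$ and $K$ on $X$ commute, the $K$-action on~$X$ together with the trivial $K$-action on~$ET$ descends to a well-defined $K$-action on~$Y$.

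The next step is to identify the two cohomology groups appearing in the corollary as $K$-equivariant and ordinary cohomology of~$Y$, respectively. For the mixed equivariant cohomology, observe that
\[
EK \times_{K} Y \;=\; EK \times_{K}\bigl(ET\times_{T}X\bigr) \;=\; (EK\times ET)\times_{K\times T} X,
\]
and since $EK\times ET$ is a model for $E(K\times T)$, we obtain a natural isomorphism $H_{K}^{*}(Y)\cong H_{T\times K}^{*}(X)$. By definition $H^{*}(Y)=H_{T}^{*}(X)$, and under these identifications the restriction map $\rho_{Y}\colon H_{K}^{*}(Y)\to H^{*}(Y)$ coming from the fiber inclusion $Y\hookrightarrow EK\times_{K}Y$ coincides with the restriction map $H_{T\times K}^{*}(X)\to H_{T}^{*}(X)$ of the corollary (both come from forgetting the $K$-equivariance).

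Since $H_{T\times K}^{*}(X)$ is concentrated in even degrees by hypothesis, the translated statement is that $H_{K}^{*}(Y)$ is concentrated in even degrees. Proposition~\ref{thm:circle} applied to the $K$-space~$Y$ then yields that $\rho_{Y}$ is surjective in even degrees, which is the desired conclusion. The only technical point worth mentioning is that $Y$ is not a finite CW-complex even when $X$ is, but this causes no issue: Proposition~\ref{thm:circle} relies only on the Serre spectral sequence of the Borel fibration $Y\hookrightarrow EK\times_{K}Y\to BK$ with $E_{2}$-page $H^{*}(BK)\otimes H^{*}(Y)$, which is available for arbitrary $K$-spaces. The main obstacle is therefore purely bookkeeping, namely the correct identification of the restriction map $\rho_{Y}$ with the map in the corollary; once this is in place the proof is immediate.
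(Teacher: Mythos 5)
Your proposal is correct and follows exactly the paper's own argument: apply Proposition~\ref{thm:circle} to the $K$-space $Y=ET\times_{T}X$ and use the identification $H_{K}^{*}(Y)\cong H_{T\times K}^{*}(X)$, with $H^{*}(Y)=H_{T}^{*}(X)$. The extra bookkeeping you supply (commuting actions, $EK\times ET$ as a model for $E(K\times T)$, compatibility of the restriction maps) is a fuller justification of the same one-line reduction.
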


\begin{proof}
  Apply the previous result to the $K$-space~$Y=ET\times_{T}X$ and note that $\HK^{*}(Y)=H_{T\times K}^{*}(X)$.
\end{proof}

\bibliographystyle{alpha}
\bibliography{ref}

\begin{thebibliography}{KPW22}

\bibitem[AF23]{AndersonFulton:2023}
D.~Anderson and W.~Fulton.
\newblock {\em Equivariant Cohomology in Algebraic Geometry}.
\newblock Cambridge University Press, 2023.

\bibitem[BFN18]{BravermanFinkelbergNakajima:2018}
A.~Braverman, M.~Finkelberg, and H.~Nakajima.
\newblock Coulomb branches of $3d$ $\mathcal{N}=4$ quiver gauge theories and slices in the affine grassmannian, 2018.
\newblock Available at \url{https://arxiv.org/abs/1604.03625}.

\bibitem[Bie97]{Bielawski:1997}
R.~Bielawski.
\newblock Hyper\"{a}hler structures and group actions.
\newblock {\em Journal of the London Mathematical Society}, 55(2):400--414, 1997.

\bibitem[BR23]{BottaRimanyi:2023}
T.~M. Botta and R.~Rimanyi.
\newblock Bow varieties: Stable envelopes and their 3d mirror symmetry, 2023.
\newblock Available at \url{https://arxiv.org/abs/2308.07300}.

\bibitem[Che11]{Cherkis:2011}
S.~A. Cherkis.
\newblock Instantons on gravitons.
\newblock {\em Communications in Mathematical Physics}, 306(2):449--483, 2011.

\bibitem[Kam22]{Kamnitzer:2022}
J.~Kamnitzer.
\newblock Symplectic resolutions, symplectic duality, and coulomb branches, 2022.
\newblock Available at \url{https://arxiv.org/abs/2202.03913}.

\bibitem[Kir16]{Kirillov:2016}
A.~Kirillov.
\newblock {\em Quiver Representations and Quiver Varieties}.
\newblock American Mathematical Society, 2016.
\newblock Available at \url{https://books.google.com/books?id=LhTrDAAAQBAJ}.

\bibitem[KP21]{Krylov:2021}
V.~Krylov and I.~Perunov.
\newblock Almost dominant generalized slices and convolution diagrams over them.
\newblock {\em Advances in Mathematics}, 392:108034, 2021.

\bibitem[KPW22]{KamnitzerPhamWeekes:2022}
J.~Kamnitzer, K.~Pham, and A.~Weekes.
\newblock Hamiltonian reduction for affine grassmannian slices and truncated shifted yangians.
\newblock {\em Advances in Mathematics}, 399:108281, 2022.

\bibitem[MO19]{MaulikOkounkov:2019}
D.~Maulik and A.~Okounkov.
\newblock {\em Quantum groups and quantum cohomology}, volume 408.
\newblock Astérisque, 2019.

\bibitem[MW74]{MarsdenWeinstein:1974}
J.~Marsden and A.~Weinstein.
\newblock Reduction of symplectic manifolds with symmetry.
\newblock {\em Reports on Mathematical Physics}, 5(1):121--130, 1974.

\bibitem[Nak12]{Nakajima:2012}
H.~Nakajima.
\newblock Handsaw quiver varieties and finite \( w \)-algebras.
\newblock {\em Moscow Mathematical Journal}, 12(3):633--666, 2012.

\bibitem[Nak21]{Nakajima:2021}
H.~Nakajima.
\newblock Towards geometric satake correspondence for kac-moody algebras -- cherkis bow varieties and affine lie algebras of type $a$, 2021.
\newblock Available at \url{https://arxiv.org/abs/1810.04293}.

\bibitem[NT17]{NakajimaTakayama:2017}
H.~Nakajima and Y.~Takayama.
\newblock Cherkis bow varieties and coulomb branches of quiver gauge theories of affine type a.
\newblock {\em Selecta Mathematica}, 23(4):2553--2633, 2017.

\bibitem[RS20]{RimanyiShou:2020}
R.~Rimanyi and Y.~Shou.
\newblock Bow varieties---geometry, combinatorics, characteristic classes, 2020.
\newblock Available at \url{https://arxiv.org/abs/2012.07814}.

\bibitem[Slo80]{Slodowy:1980}
P.~Slodowy.
\newblock {\em Four Lectures on Simple Groups and Singularities}.
\newblock Mathematical Institute, Rijksuniversiteit, 1980.
\newblock Available at \url{https://books.google.com/books?id=oVd0QgAACAAJ}.

\bibitem[Spa89]{Spanier:1989}
E.~H. Spanier.
\newblock {\em Algebraic Topology}.
\newblock Springer, 1989.
\newblock Available at \url{https://books.google.com/books?id=h-wc3TnZMCcC}.

\bibitem[Tak16]{Takayama:2016}
Y.~Takayama.
\newblock Nahm's equations, quiver varieties and parabolic sheaves.
\newblock {\em Publications of the Research Institute for Mathematical Sciences}, 52(1):1--41, 2016.

\bibitem[Whi49]{Whitehead:1949}
J.~H.~C. Whitehead.
\newblock Combinatorial homotopy. i.
\newblock {\em Bulletin of the American Mathematical Society}, 55(3.P1):213--245, 1949.

\bibitem[Ło64]{Lojasiewicz:1964}
S.~Łojasiewicz.
\newblock Triangulation of semi-analytic sets.
\newblock {\em Annali della Scuola Normale Superiore di Pisa - Scienze Fisiche e Matematiche}, 18(4):449--474, 1964.
\newblock Available at \url{http://www.numdam.org/item/ASNSP_1964_3_18_4_449_0/}.

\end{thebibliography}
\end{document}